\numberwithin{equation}{section}
\theoremstyle{definition}
\newtheorem{defn}{Definition}[section]}
\newtheorem{theorem}{Theorem}[section]
\newtheorem{proposition}[theorem]{Proposition}
\newtheorem{conjecture}[theorem]{Conjecture}
\newtheorem{corollary}[theorem]{Corollary}
\newtheorem{lemma}[theorem]{Lemma}
\theoremstyle{definition}
{
\newtheorem{remark}[theorem]{Remark}

}}
\newcommand{\cal}{\mathcal}
\newcommand{\PP}{{\cal P}}
\newcommand{\SSS}{{\cal S}}
\newcommand{\Cc}{{\mathbb{C}}}
\newcommand{\Nn}{{\mathbb{N}}}
\newcommand{\Rr}{{\mathbb{R}}}
\newcommand{\Zz}{{\mathbb{Z}}}
\def\dist{\operatorname{dist}}
\def\Re{\operatorname{Re}}
\def\Im{\operatorname{Im}}
\newcommand{\diam}{\operatorname{diam}}
\newcommand{\de}{\delta}\newcommand{\De}{\Delta}
\def\la{\lambda}
\newcommand{\vla}{{\boldsymbol{\la}}}
\newcommand{\vw}{{\boldsymbol{w}}}
\newcommand{\om}{\omega}\newcommand{\Om}{\Omega}
\def\R{\mathbb R}
\newcommand{\comment}[1]{}
\begin{document}

\title[Outer Billiards]
{Dissipative outer billiards: a case study}

\author[G.~Del~Magno]{Gianluigi Del Magno}
\author[J.~P.~Gaiv\~ao]{Jos\/e Pedro Gaiv\~ao}
\author[E.~Gutkin]{Eugene Gutkin}
\dedicatory{This paper is dedicated to the memory of the third author, who unexpectedly passed away while the paper was being completed.}
\thanks{The first two authors were partially supported by Fundac\~ao para a Ci\^encia e a Tecnologia through the Program POCI 2010 and the Project `Randomness in Deterministic Dynamical Systems and Applications' (PTDC- MAT-105448-2008).}

\address{CEMAPRE, ISEG, Universidade de Lisboa, Rua do Quelhas 6, 1200-781
Lisboa, Portugal} 
\email{delmagno@iseg.utl.pt}

\address{CEMAPRE, ISEG, Universidade de Lisboa, Rua do Quelhas 6, 1200-781 Lisboa, Portugal}
\email{jpgaivao@iseg.utl.pt}

\address{Department of Mathematics, Nicolaus Copernicus University, Chopina 12/18, Torun 87-100, Poland;
\ Institute of Mathematics of Polish Academy of Sciences,
Sniadeckich 8, Warsaw 00-956, Poland}
\email{gutkin@mat.umk.pl, gutkin@impan.pl}

\keywords{Outer billiards, Contracting Reflection Law, Piecewise contractions}

\subjclass[2010]{Primary: 37E15; Secondary: 37E99, 37D50}

\date{\today}

\begin{abstract}
We study dissipative polygonal outer billiards, i.e. outer billiards about convex polygons with a contractive reflection law. We prove that dissipative outer billiards about any triangle and the square are asymptotically periodic, i.e. they have finitely many global attracting periodic orbits. A complete description of the bifurcations of the periodic orbits as the contraction rates vary is given. For the square billiard, we also show that the asymptotic periodic behavior is robust under small perturbations of the vertices and the contraction rates. Finally, we describe some numerical experiments suggesting that dissipative outer billiards about regular polygon are generically asymptotically periodic.
\end{abstract}

\maketitle


\section{Introduction and motivation}       
\label{intro}

The {\em outer billiard} about a convex compact region $\Om \subset \Cc $ is the map $T \colon \Cc\setminus\Om\to\Cc\setminus\Om$ introduced by M.~Day~\cite{MD}. For $z\in X:=\Cc\setminus\Om$, let $L_z \subset\Cc$ be the ray through $z$ intersecting tangentially $\Om$ on the right with respect to the obvious orientation. Suppose first that $\Om$ is strictly convex, and let $t\in\partial\Om$ be the intersection point of $\Om$ and $L_z$. Then $T(z)\in L_z$ is the point symmetric to $z$ with respect to $t$. The transformation $T \colon X\to X$ is an area preserving twist map~\cite{KH}. Neumann asked if there are points $z\in X$ whose $\omega$-limit sets contain subsets of $\partial\Om$ or infinity~\cite{Neumann}. Applying his famous KAM-type theorem, Moser showed that this is impossible if $\partial\Om$ is continuously differentiable $333$ times and is strictly convex~\cite{Moser0,Moser1}. In this case, in fact, $X$ contains invariant circles arbitrarily close to $\partial\Om$ and arbitrarily far away from it. It follows that the orbit of every $z\in X$ belongs to an annulus bounded by two invariant circles. Later on, R.~Douady showed that six continuous derivatives of $\partial\Om$ suffice for the existence of invariant circles~\cite{Douady}. However, the strict convexity of $\partial\Om$ is necessary~\cite{GK}.

The definition of the outer billiard map has to be modified if $ \Omega $ is not strictly convex. Suppose for example that $ \Omega $ is a convex $ k $-gon $ P $. For points $z\in X$ such that the ray $L_z$ contains a side of $P$, the image $T(z)$ is not uniquely defined. These points form the {\em singular set} $\, \SSS$ of $T$, which is a union of $k$ half-lines. The outer billiard map is not defined on $\SSS$, and has jump discontinuities across its lines. The complement $X\setminus\SSS$ is a disjoint union of $k$ open cones $ A_{1},\ldots,A_{k} $. The apex of $A_k$ is a corner point $w_k\in P$, and the restriction $T|_{A_k}$ is the euclidean reflection about $w_k$. Thus, the outer billiard about $P$ is a planar piecewise isometry on $k$ cones. The dynamics of some classes of piecewise isometries\footnote{Interval exchanges transformation are one-dimensional piecewise isometries. This very special class of piecewise isometries makes a research subject on its own right, and it would take us too far afield to comment on it.} have been studied in detail, mostly in two dimensions~\cite{AsGo,Bu,GH1,GH2,Ha}. 

Polygonal outer billiards make a very special and beautiful class of piecewise isometries. Moser\footnote{Moser presented this question as a caricature of the problem on the stability of motion in celestial mechanics~\cite{Moser2}.} pointed out that for these billiards, Neumann's question about the existence of orbits accumulating on the boundary of the table or at infinity was open~\cite{Moser2}. In view of Moser's remark, the question has attracted much of attention, and several partial results in both directions were obtained~\cite{GuSi,Ko,Sch1,ViSh}. However, the full scope of the Neumann-Moser question remains open. For various aspects of polygonal outer billiards, we refer the reader to~\cite{BeCa,Gu12,GuTa,Hu,Ta,Sch2}.

We will now embed the outer billiard about the convex $ k $-gon $P$ into a one-parameter family of dynamical systems. As before, let $ A_{i} $ be the open cones of $ \Cc $ whose apex is the corner $ w_{i} $ of $ P $. Let $ \la \in (0,+\infty) $. Define
\begin{equation}    
\label{eq:Tlambda}
T_{\la}(z)=-\la z + (1+\la)w_{i} \quad \text{for } z \in A_{i}.
\end{equation}
The map $ T_{\la} $ is a piecewise affine dilation on the cones $ A_{1},\ldots,A_{k} $. For $\la=1$, we recover the outer billiard map. For  $0<\la<1$ (resp. $1<\la$), the map is piecewise contracting (resp. piecewise expanding). If we denote by $ Q $ the polygon obtained by reflecting $ P $ about the origin, then it is easy to see that $ T^{-1}_{\la} $ is conjugated to the map $ T'_{1/\la} $ of the outer billiard about $ Q $ with contraction rate $ 1/\la $. In particular, the $ \omega $-limit sets of $ T_{\la} $ and $ T'_{1/\la} $ coincide. Therefore, as far as the asymptotic properties of $ T_{\la} $ for $ \la \neq 1 $ are concerned, we will only consider the case $ 0 < \la < 1 $. 

We will call the map $ T_{\la} $ with $ 0 < \la < 1 $ the {\em dissipative outer billiard}, and will call {\em the outer billiard} the map $ T_{\la} $ with $ \la = 1 $ (see Fig.~\ref{fig:poly}).
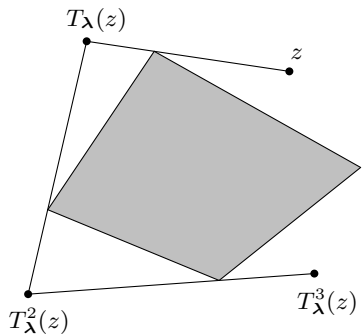
\begin{figure} 
\definecolor{qqffff}{rgb}{0,1,1}
\begin{tikzpicture}[line cap=round,line join=round,>=triangle 45,x=.7cm,y=.7cm]
\clip(-1.12,-1.54) rectangle (6.54,5);
\fill[fill=black,fill opacity=0.25] (2,4) -- (0,1) -- (3.22,-0.34) -- (5.88,1.8) -- cycle;
\draw (2,4)-- (0,1);
\draw (0,1)-- (3.22,-0.34);
\draw (3.22,-0.34)-- (5.88,1.8);
\draw (5.88,1.8)-- (2,4);
\draw (4.54,3.62)-- (0.73,4.19);
\draw (0.73,4.19)-- (-0.37,-0.6);
\draw (-0.37,-0.6)-- (5.01,-0.21);
\begin{scriptsize}
\fill  (4.54,3.62) circle (1.5pt);
\draw (4.68,3.95) node {$z$};
\fill  (0.73,4.19) circle (1.5pt);
\draw (0.92,4.6) node {$T_{\vla}(z)$};
\fill  (-0.37,-0.6) circle (1.5pt);
\draw (-0.16,-1.1) node {$T^{2}_{\vla}(z)$};
\fill  (5.01,-0.21) circle (1.5pt);
\draw (5.26,-.75) node {$T^{3}_{\vla}(z)$};
\end{scriptsize}
\end{tikzpicture}
\caption{Example of a dissipative polygonal outer billiard.} 
\label{fig:poly} 
\end{figure} 

The dynamics of the outer billiard when $ \la = 1 $ about the equilateral triangle, the square and the hexagon is particularly simple: every orbit of these billiards is periodic~\cite{Moser2}. In this paper, we investigate in detail the dissipative outer billiards about the equilateral triangle and the square. We show that both billiards are {\em asymptotically periodic}, i.e., their $ \omega $-limit sets consist of periodic orbits, and every orbit converges to one of them. We then perform a bifurcation analysis, by computing the number and the period of these orbits as $ \la $ varies between 0 and 1. Using a result by Catsigeras and Budelli on general piecewise contractions~\cite{CB}, we extend the results obtained for the square to the more general situation when the contraction rate $ \la $ may depend on the vertices of $ P $, and to quadrilaterals sufficiently close to the square. By the affine equivariance of $ T_{\la} $, the results obtained in this paper are not limited to the equilateral triangle, the square and quadrilaterals close to the square, but extend also to the polygons obtained by applying to them affine transformations. 

Our motivation for studying dissipative polygonal outer billiards is threefold: 
\begin{enumerate}
\item similarly to the inner billiard \cite{Gu12}, little is known about periodic orbits of polygonal outer billiards \cite{Ta}. By a recent result of Culter, every outer polygonal billiard has a periodic orbit~\cite{TaCa}. A detailed knowledge of the periodic points of $T_{\la} $ for $\la<1 $ may help obtain information on periodic points when $ \la =1 $ by studying the limit $\la\to 1 $;
\item the dissipative polygonal inner billiard exhibits chaotic attractors with SRB measures~\cite{Arr_etal,DM_etal_1,DM_etal_2,Ma_etal}. It would be instructive to compare these phenomena with the dynamics exhibited by dissipative polygonal outer billiards;
\item by a result of Bruin and Deane, almost every piecewise contraction is
asymptotically periodic~\cite{Bruin}. It would be desirable to obtain a similar result for dissipative polygonal outer billiards.
\end{enumerate}

The paper is structured as follows. In Section~\ref{segment}, we discuss the dissipative outer billiard about a segment, which is in a sense a one-dimensional caricature of the dissipative polygonal outer billiard. Despite its somewhat artificial nature, this dynamical system shows in a nutshell some of the main features of our study. In Section~\ref{general}, we introduce some notation and collect a few general results about the periodic orbits and the $ \om $-limit set of dissipative outer billiards. The dissipative outer billiards about a triangle and about the square are studied in Sections~\ref{triangle} and \ref{square}, respectively. In Section~\ref{persistency}, we address the problem of the robustness of the asymptotic periodicity under perturbations of the polygon and the contraction rates. Finally, in Section~\ref{conclu}, we illustrate our numerical experiments and make a conjecture concerning dissipative outer billiards about general regular polygons.

While writing this paper, we learned from R.~Schwartz that Jeong obtained results similar to ours concerning the asymptotic periodicity of the square and the bifurcation analysis of its periodic orbits~\cite{J}. 

\section{One-dimensional billiards}  
\label{segment}

The outer billiards defined in the previous section are piecewise maps of $ \Cc $. In a similar fashion, one can define outer billiards on $ \Rr $. For these billiards, the choice of the region $ \Omega $ is limited, since $ \Omega $ can be only an interval $ I \subset \Rr $. In this subsection, we will see that the dissipative outer billiard on $\Rr$ yields a family of one-dimensional transformations, exhibiting typical features of the dissipative polygonal outer billiard.

We assume without loss of generality that $I=[-1,1]$, and set $X_+=(1,+\infty) $ and $ X_{-}=(\infty,-1)$. Then $X=X_+\cup X_-$ and $T_{\la} \colon X_{\pm}\to X_{\mp}$, where $ T_{\la}(x) = -\la x \mp (1+\la) $ on $ X_{\pm} $. In other words, every point of $ \Rr \setminus I $ is reflected about the further endpoint of $ I $. Since $T^{2}_{\la} (X_{\pm}) \subset X_{\pm}$, by symmetry, it suffices to analyze $T^{2}_{\la}|_{X_{+}} $. For $\la\in(0,1)$, the map $ T^{2}_{\la}|_{X_{+}} $ is a contraction. Its unique fixed point is $p_{\la}=(1+\la)/(1-\la)$. Furthermore, $\lim_{\la\to 0}p_{\la}=1 $ and $ \lim_{\la\to 1}p_{\la}=+\infty$. 

By the affine equivariance of our setting, the previous discussion yields the following.

\begin{proposition}  
\label{onedim_prop}
Let $I=[-a,a]\subset\R$ be any finite interval with $ a>0 $, and let $T_{\la} \colon \R\setminus I\to\R\setminus I$ be the dissipative outer billiard about $I$. For $0<\la<1$, the map $T_{\la}$ contracts $\R\setminus I$ with contraction coefficient $\la^2$ to the unique two-periodic orbit 
\[
\om_{\la}=\left\{a\frac{1+\la}{1-\la},-a\frac{1+\la}{1-\la}\right\}.
\]
Moreover, we have 
\[ 
\lim_{\la\to 0} \om_{\la}=\{a,-a\} \quad \text{and} \quad \lim_{\la\to 1} \om_{\la}=\{+\infty,-\infty\}.
\] 
\end{proposition}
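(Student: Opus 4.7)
The plan is to derive the statement for general $a > 0$ from the case $a = 1$ already analyzed in the discussion preceding the proposition, by invoking the affine equivariance of the family $T_\la$. Explicitly, the dilation $\phi_a \colon \Rr \to \Rr$, $\phi_a(x) = ax$, conjugates $T_\la$ acting on $\Rr \setminus [-1,1]$ to $T_\la$ acting on $\Rr \setminus [-a,a]$; so every invariant set and every contraction rate transfers by a rescaling factor $a$ (for the sets) or identically (for the multiplicative rate $\la^2$).

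Thus it remains to establish the claim for $I = [-1,1]$. First I would make explicit the computation of $T_\la^2$ on $X_+$: from $T_\la(x) = -\la x - (1+\la)$ on $X_+$ and $T_\la(y) = -\la y + (1+\la)$ on $X_-$ one obtains $T_\la^2(x) = \la^2 x + (1+\la)^2$ on $X_+$. For $\la \in (0,1)$ this is an affine contraction of rate $\la^2$, so by the Banach fixed point theorem it has a unique fixed point on $X_+$, namely $p_\la = (1+\la)/(1-\la)$, and every $T_\la^2$-orbit in $X_+$ converges to $p_\la$ geometrically at rate $\la^2$. The symmetry $T_\la(-x) = -T_\la(x)$, read off the formulas, gives the analogous statement on $X_-$ with attracting fixed point $-p_\la$. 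A direct substitution $T_\la(p_\la) = -\la(1+\la)/(1-\la) - (1+\la) = -(1+\la)/(1-\la) = -p_\la$ verifies that $\{p_\la, -p_\la\}$ is a genuine $2$-periodic orbit, which therefore attracts all of $\Rr \setminus I$ at rate $\la^2$. Rescaling by $\phi_a$ gives the orbit stated in the proposition.

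Finally, the limits as $\la \to 0^+$ and $\la \to 1^-$ are read off the closed form $p_\la = (1+\la)/(1-\la)$. There is no real obstacle here: the proposition is essentially a corollary of the one-dimensional analysis already carried out in the paragraphs preceding its statement, with the only additional ingredient being the trivial scaling conjugacy $\phi_a$.
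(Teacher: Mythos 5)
Your proof is correct and follows the same route as the paper, which establishes the proposition by exactly this reduction: the discussion preceding the statement treats $I=[-1,1]$, showing $T_{\la}^{2}|_{X_{+}}$ is a contraction with fixed point $p_{\la}=(1+\la)/(1-\la)$, and the proposition is then deduced by affine equivariance. Your write-up merely makes explicit the formula $T_{\la}^{2}(x)=\la^{2}x+(1+\la)^{2}$ and the scaling conjugacy, which the paper leaves implicit.
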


In a nutshell, the one-dimensional dissipative outer billiard is a smooth family of linear contractions. By iteration, each $T_{\la}$ contracts the whole space to the unique periodic orbit. This orbit depends analytically on the dissipation parameter $\la$. As the parameter approaches the boundary $\{0,1\}$ of the range, the transformation $T_{\la}$ converges to the limit maps $T_0 $ and $ T_1$. The map $T_0$ is degenerate: $T_0(X)=\{-a,a\}$, whereas the map $T_1$ is the usual outer billiard about the interval $[-a,a]$. As the reader will see that the dissipative outer billiard about the equilateral triangle and the square retain these features, adding to them an infinite sequence of bifurcations.

\section{General results}   
\label{general}

We now extend the definition of the dissipative outer billiards, allowing it to have several contraction rates. Let $ P $ be a convex $ k $-gon with vertices $ \vw = (w_1,\dots,w_k) $, and let $ \vla = (\la_{1}, \ldots,\la_{k}) $ with $ 0 < \la_{i} < 1 $ for $ i=1,\ldots,k $. The sup norm of $ \Rr^{k} $ is denoted by $ \|\cdot\| $. We also denote by $B_{r}(z)\subset\Cc$ the closed ball of radius $r>0$ centered at $z \in \Cc $ with respect to the norm $ \|\cdot\| $.

As before, let $ X = \Cc \setminus P $, and let $ A_{i} $ be the cone with apex $ w_{i} $. The dissipative outer billiard map about $ P $ with contraction rates $ \vla $ is given by
\begin{equation}    
\label{eq:Tvlambda}
T_{\vla}(z)=-\la_{i} z + (1+\la_{i})w_{i} \quad \text{for } z \in A_{i}.
\end{equation}

As for the single contraction rate case, the singular set $ \SSS $ of $ T_{\vla} $ consists of the half-lines containing the sides of $ P $. For any integer $n\geq1$, let $\mathcal{S}_n$ denote the set of points $z\in X$ for which there exists an integer $0\leq i<n$ such that $T^i_\vla(z)$ belongs to $\mathcal{S}$ (so $ \SSS_{1} = \SSS $). We call $\mathcal{S}_n$ the \textit{singular set of order $n$}. Note that, $\mathcal{S}_n$ is a finite union of half-lines, and $\{\mathcal{S}_n\}$ is a filtration, i.e. 
$\mathcal{S}_n\subseteq\mathcal{S}_{n+1}$ for every $n\geq1$.

We say that a point $z \in X$ is {\em non-singular} if $z \notin \SSS_{n} $ for every $ n \ge 1 $. For such a point $ z $, we denote by $\om(z)$ the $\om$-limit set of the $ T_{\vla} $-orbit of $ z $. We also denote by $ \omega(T_{\vla}) $ the union of the $ \om $-limit sets of all non-singular points of $ T_{\vla} $. The following notion is central in this paper.

\begin{defn}
\label{de:ap}
We say that $T_\vla$ is \textit{asymptotically periodic} if $\omega(T_\vla)$ is the union of finitely many periodic orbits, and the forward orbit of every non-singular point converges to one of them. 
\end{defn}

\subsection{Forward invariant set and periodic orbits}
\label{su:forward}

Let $ a = \|\vla\| $ and $ b = \|\vw\| $. Denote by $ K_{\vla} $ the closed ball $ B_{r}(0) $ of radius $ r=b(1+a)/(1-a)^{2} $.

\begin{lemma}  
\label{basic_lem}
Let $ z \in X $ be a non-singular point. Then there exists a sequence $ \{i_{j}\}_{j \in \Nn} $ with $ i_{j} \in \{1,\ldots,k\} $ such that for every $ n \in \Nn $, 
\begin{equation}
\label{eq:iterate}
T^{n}_{\vla}(z) = (-1) \la_{i_{1}} \cdots \la_{i_{n}} + \sum^{n}_{j=1} (-1)^{n-j}(1+\la_{i_{j}}) \la_{i_{j+1}} \cdots \la_{i_{n}} w_{i_{j}}.
\end{equation}
\end{lemma}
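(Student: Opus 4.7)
The statement is an explicit closed form for the iterates of $T_{\vla}$ along a symbolic itinerary, so my plan is a direct induction on $n$ after first constructing the itinerary $\{i_j\}$.

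First I would construct the sequence $\{i_j\}$. Since $z$ is non-singular, $T_{\vla}^{n}(z) \notin \SSS$ for every $n \geq 0$, so each forward iterate lies in exactly one of the open cones $A_1,\ldots,A_k$. I define $i_{j} \in \{1,\ldots,k\}$ to be the unique index such that $T_{\vla}^{j-1}(z) \in A_{i_{j}}$ for $j \geq 1$. This is where non-singularity is used: it guarantees the itinerary is well defined for all $j$.

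Next I would prove the formula by induction on $n$. For $n=1$, since $z \in A_{i_1}$, the defining formula \eqref{eq:Tvlambda} gives $T_{\vla}(z) = -\la_{i_1} z + (1+\la_{i_1}) w_{i_1}$, matching the claimed expression (with the empty product of $\la$'s understood as $1$). For the inductive step, assume the identity holds at level $n$. Since $T_{\vla}^{n}(z) \in A_{i_{n+1}}$, I apply $T_{\vla}(u) = -\la_{i_{n+1}} u + (1+\la_{i_{n+1}}) w_{i_{n+1}}$ to $u = T_{\vla}^{n}(z)$ and use the inductive hypothesis to obtain
\begin{equation*}
T_{\vla}^{n+1}(z) = (-1)^{n+1} \la_{i_1}\cdots\la_{i_{n+1}}\, z + \sum_{j=1}^{n}(-1)^{n+1-j}(1+\la_{i_{j}})\la_{i_{j+1}}\cdots\la_{i_{n+1}}\, w_{i_{j}} + (1+\la_{i_{n+1}})\, w_{i_{n+1}}.
\end{equation*}
The last term is exactly the $j=n+1$ summand (with the empty product equal to $1$), so absorbing it into the sum yields the formula at level $n+1$.

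I expect no real obstacle here; the whole content is bookkeeping of signs and products when multiplying the inductive expression by $-\la_{i_{n+1}}$ and then shifting the summation index. The only point worth flagging explicitly in the write-up is the convention that the product $\la_{i_{n+1}}\cdots \la_{i_{n}}$ appearing when $j=n$ is the empty product, equal to $1$, since otherwise the pattern of the sum may look mismatched. (I also note that the leading term in the printed statement \eqref{eq:iterate} should carry an exponent $n$ and a factor $z$, i.e.\ read $(-1)^{n}\la_{i_1}\cdots\la_{i_n}\, z$; this is the quantity that the induction naturally produces.)
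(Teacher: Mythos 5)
Your proof is correct and is exactly the straightforward induction the authors had in mind (the paper omits the proof entirely, calling it straightforward): define the itinerary via non-singularity, then induct using \eqref{eq:Tvlambda}. Your flag about the typo is also right — the leading term in \eqref{eq:iterate} should read $(-1)^{n}\la_{i_{1}}\cdots\la_{i_{n}}\,z$, as confirmed by the way the formula is used in the proof of Proposition~\ref{finite_cor}.
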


The proof of the lemma is straightforward, and so we omit it.

\begin{proposition} 
\label{basic_prop} 
We have $T_\vla(K_{\vla}) \subset K_{\vla} $ and $ \omega(T_{\vla}) \subset K_{\vla} $.
\end{proposition}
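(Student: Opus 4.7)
The plan is to derive both statements from a single uniform forward bound $\|T_\vla(z)\| \le a\|z\| + (1+a)b$, valid for every $z \in X \setminus \SSS$. From this, the first assertion becomes an elementary check and the second follows by iteration.

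For the bound itself, I would fix $z \in A_i$ and apply the triangle inequality directly to \eqref{eq:Tvlambda}, using $\la_i \le \|\vla\| = a$ and $\|w_i\| \le \|\vw\| = b$:
\[
\|T_\vla(z)\| = \|-\la_i z + (1+\la_i)w_i\| \le \la_i\|z\| + (1+\la_i)\|w_i\| \le a\|z\| + (1+a)b.
\]
Because the right-hand side is uniform in the cone index, the estimate holds throughout $X \setminus \SSS$.

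To deduce $T_\vla(K_\vla) \subset K_\vla$, I would substitute $\|z\| \le r := (1+a)b/(1-a)^2$ into the bound and check $ar + (1+a)b \le r$, equivalently $r \ge (1+a)b/(1-a)$; this holds for our choice of $r$ because $(1-a)^{-1} \le (1-a)^{-2}$ when $0<a<1$. For the inclusion $\omega(T_\vla) \subset K_\vla$, I would iterate the bound (equivalently, apply the closed form in Lemma \ref{basic_lem}) to obtain, for every non-singular $z$ and every $n \ge 1$,
\[
\|T^n_\vla(z)\| \le a^n\|z\| + (1+a)b\sum_{j=0}^{n-1}a^j \le a^n\|z\| + \frac{(1+a)b}{1-a}.
\]
Sending $n \to \infty$ yields $\limsup_n \|T^n_\vla(z)\| \le (1+a)b/(1-a) \le r$, so $\omega(z) \subset K_\vla$; taking the union over non-singular $z$ gives the claim.

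No step presents a serious obstacle: this is the standard absorbing-ball argument for a piecewise affine contraction of rate $a<1$. The only bookkeeping is that $T_\vla$ is defined only on $X \setminus \SSS$, so $T_\vla(K_\vla)$ must be read as $T_\vla(K_\vla \cap (X \setminus \SSS))$; because the bound does not depend on which cone $z$ lies in, this is harmless.
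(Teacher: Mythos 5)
Your proof is correct and follows essentially the same route as the paper: the authors also reduce everything to the estimate $\|T^n_\vla(z)\| \le a^n\|z\| + (1+a)b/(1-a)$ (obtained there from the closed-form iterate of Lemma~\ref{basic_lem} rather than by iterating a one-step bound, but this is the same computation). Your additional checks — that $ar+(1+a)b\le r$ for the paper's slightly generous choice of $r$, and the remark about reading $T_\vla(K_\vla)$ on the complement of $\SSS$ — are exactly the details the paper leaves implicit.
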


\begin{proof} 
The claim follows easily from the inequality  
\[
\left\|T^{n}_{\vla}(z)\right\| \le a^n\|z\| + \frac{(1+a)}{1-a} b \qquad \text{for } z \in X \setminus \SSS_{n} \text{ and } n \in \Nn,
\]
which is a direct consequence of~\eqref{eq:iterate}.
\end{proof}


By the previous proposition, every periodic orbit of $ T_{\vla} $ is contained in $ K_{\vla} $.

\begin{proposition}      
\label{finite_cor}
The map $ T_{\vla} $ has finitely many periodic orbits of period less than $ n \in \Nn $.
\end{proposition}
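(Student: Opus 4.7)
The plan is to leverage the explicit formula from Lemma~\ref{basic_lem}. Fix an itinerary $\iota = (i_{1}, \ldots, i_{m}) \in \{1,\ldots,k\}^{m}$ of length $m$ and consider the (possibly empty) set of non-singular points whose first $m$ iterates visit the cones $A_{i_{1}}, \ldots, A_{i_{m}}$ in order. On this set, the formula exhibits $T^{m}_{\vla}$ as an affine map in $z$ whose linear coefficient is $(-1)^{m}\la_{i_{1}}\cdots\la_{i_{m}}$ and whose constant term $C(\iota) \in \Cc$ depends only on $\iota$ and on the data $(\vw, \vla)$.

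A periodic orbit of period $m < n$ consists of periodic points satisfying $T^{m}_{\vla}(z) = z$, and each such point is non-singular and hence carries a well-defined itinerary of length $m$. For a fixed itinerary $\iota$, the fixed-point equation therefore reduces to the scalar linear equation
\[
\bigl(1 - (-1)^{m}\la_{i_{1}}\cdots\la_{i_{m}}\bigr)\, z = C(\iota).
\]
Since $0 < \la_{i_{j}} < 1$ for every $j$, the product $\la_{i_{1}}\cdots\la_{i_{m}}$ is strictly less than $1$, so the coefficient of $z$ on the left is nonzero, and the equation has a unique solution in $\Cc$. Thus each itinerary of length $m$ contributes at most one periodic point satisfying $T^{m}_{\vla}(z) = z$ (the solution may or may not lie in the prescribed cones, but this only sharpens the bound).

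The number of itineraries of length at most $n-1$ is bounded by $\sum_{m=1}^{n-1} k^{m} = (k^{n}-k)/(k-1) < \infty$, so there are only finitely many periodic points of period less than $n$, and \emph{a fortiori} only finitely many periodic orbits. The argument is essentially routine once Lemma~\ref{basic_lem} is in hand: the only conceptual input is that dissipation (namely $\la_{i} < 1$ for every $i$) is precisely what prevents the fixed-point equation from degenerating, regardless of the choice of itinerary. No serious obstacle is anticipated.
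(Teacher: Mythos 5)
Your proposal is correct and follows essentially the same route as the paper: apply the explicit iterate formula of Lemma~\ref{basic_lem}, observe that $0<\la_{i_{1}}\cdots\la_{i_{m}}<1$ forces the affine fixed-point equation to have a unique solution for each itinerary, and bound the number of itineraries by $k^{m}$. The only cosmetic difference is that you sum over all periods $m<n$ explicitly, while the paper states the bound $k^{n}$ per period; the substance is identical.
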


\begin{proof}
Suppose that $ z $ is a periodic point of period $ n $. Then by Lemma~\ref{basic_lem}, there exists a sequence $ i_{1},\ldots,i_{n} $ of integers contained in $ \{1,\ldots,k\} $ such that 
\[ 
z=T_{\vla}^n(z)=(-1)^n \la_{i_{1}} \cdots \la_{i_{n}} z + \sum_{j=1}^n(-1)^{n-j}(1+\la_{i_{j}}) \la_{i_{j+1}} \cdots \la_{i_{n}} v_j.
\]
Since $ 0 < \la_{i_{1}} \cdots \la_{i_{n}} < 1 $, we have
\begin{equation}    
\label{periodic_point_eq}
z=\frac{\sum_{j=1}^n(-1)^{n-j}(1+\la_{i_{j}}) \la_{i_{j+1}} \cdots \la_{i_{n}} v_j}{1+(-1)^{n+1} \la_{i_{1}} \cdots \la_{i_{n}}}.
\end{equation}
Thus, $ z $ is determined by the sequence $ i_{1},\ldots,i_{n} $. Since there are at most $k^n$ of these sequences, we conclude that there are at most $k^n$ periodic points of period $n$. 
\end{proof}

The simplest periodic orbit of the outer billiard about a convex $k$-gon has rotation number\footnote{Given a periodic orbit $\gamma$ of $T_\vla$, let $\Gamma$ be the closed curve obtained by joining consecutive points of $\gamma$ using straight lines. The \textit{rotation number} of $\gamma$ is defined to be the winding number of $\Gamma$ divided by $k$.} $1/k$. The sequence of the vertices of $ P $ visited by this orbit is the infinite periodic sequence $ w_1,w_{2},\ldots, w_k, w_{1},w_{2}, \ldots $. This orbit is sometimes called {\em Fagnano\footnote{Giovanni Francesco Fagnano, italian mathematician lived between the years 1715 -- 1797.} orbit}~\cite{Ta99}.

\begin{corollary}      
\label{fagnano_cor}
There exists $0<\de<1$ depending only on the vertices of $ P $ such that if $\|\vla\|<\de$, then the Fagnano orbit exists, and is the global attractor for $T_{\vla}$.
\end{corollary}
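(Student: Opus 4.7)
Let $a=\|\vla\|$ and $b=\|\vw\|$. The key step is combinatorial: I want to show that for $a$ small enough (depending only on $\vw$), there is a $k$-cycle $\sigma$ of $\{1,\dots,k\}$ such that $T_\vla(K_\vla\cap A_i)\subset K_\vla\cap A_{\sigma(i)}$ for every $i$. Once this is in place, both existence of the Fagnano orbit and its global attractivity follow quickly.

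To establish the inclusion, I use that for $z\in K_\vla\cap A_i$ one has $T_\vla(z)-w_i=-\la_i(z-w_i)$, so $|T_\vla(z)-w_i|\le a(r+b)$ (with $r$ the radius of $K_\vla$ from Proposition~\ref{basic_prop}), and the direction of $T_\vla(z)-w_i$ is antipodal to that of $z-w_i$. A local inspection at the vertex $w_i$ shows that exactly one singular half-line emanates from $w_i$, namely the extension past $w_i$ of one of the two adjacent sides of~$P$. In a sufficiently small neighbourhood of $w_i$ the complement of $P$ is therefore split into exactly two open regions meeting at $w_i$: the cone $A_i$ itself, whose angular opening at $w_i$ equals the exterior angle $\pi-\alpha_i$ of $P$ at $w_i$; and a single neighbouring cone $A_{\sigma(i)}$, whose opening at $w_i$ is a full half-plane of angle $\pi$, bounded by the singular half-line and by the collinear polygon side. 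The antipode of $A_i$'s opening has the same measure $\pi-\alpha_i<\pi$, so it is \emph{strictly} contained in the opening of $A_{\sigma(i)}$. Hence there is a threshold $\delta>0$ depending only on $\vw$ (ensuring that the ball of radius $\delta(r+b)$ around $w_i$, intersected with the antipodal wedge, still lies inside $A_{\sigma(i)}$) such that the inclusion holds whenever $a<\delta$; and $\sigma$ is a single $k$-cycle because it is induced by walking once around the boundary of $P$.

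With the cyclic invariance proved, $T_\vla^k$ restricts to a contraction on each $K_\vla\cap A_i$ with Lipschitz constant $\prod_{j=0}^{k-1}\la_{\sigma^j(i)}\le a^k<1$, so by the Banach fixed-point theorem it has a unique fixed point $p_i$ there. The set $\{p_1,\dots,p_k\}$ is a single $k$-periodic orbit with cyclic cone sequence, i.e.\ the Fagnano orbit; formula~\eqref{periodic_point_eq} applied to the sequence $(\sigma^j(i))_{j=0}^{k-1}$ recovers the points $p_i$ explicitly. For global attractivity, pick any non-singular $z\in X$; Proposition~\ref{basic_prop} ensures $T_\vla^n(z)\in K_\vla$ for all sufficiently large $n$, and then the cyclic invariance forces each subsequence $(T_\vla^{nk+j}(z))_n$ to converge to $p_{\sigma^j(i_0)}$, where $i_0$ indexes the cone containing the orbit upon entering $K_\vla$. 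The main obstacle is the local combinatorial lemma identifying $\sigma$: it requires a careful check of which endpoint of each side of $P$ carries the singular half-line (dictated by the ``polygon on the right'' convention in the definition of $T_\vla$) and a verification that the resulting nearest-neighbour assignment on the cones is a single $k$-cycle. Everything after that is routine contraction-mapping bookkeeping.
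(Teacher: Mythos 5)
Your argument is correct in outline, but it takes a genuinely different route from the paper's. The paper obtains existence algebraically: writing $w_i=\frac{\la_i}{1+\la_i}v_i+\frac{1}{1+\la_i}v_{i+1}$ and inverting the resulting linear system (possible since $\la_1\cdots\la_k\neq(-1)^k$), and then proves attraction with the one-line estimate $\|T_{\vla}(z)-v_{i+1}\|=\la_i\|z-v_i\|\le\|\vla\|\diam(K_{\vla})$ for $z\in K_{\vla}\cap A_i$, together with the local attractivity of periodic orbits. You instead prove the cone-to-cone invariance $T_{\vla}(K_{\vla}\cap A_i)\subset K_{\vla}\cap A_{\sigma(i)}$ directly, via the observation that the antipodal sector of $A_i$ at $w_i$ (of opening $\pi-\alpha_i$) sits inside the half-plane that the neighbouring cone occupies near $w_i$, and then extract existence, uniqueness and global attraction all at once from Banach's theorem applied to $T_{\vla}^{k}$. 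Your version has the merit of making explicit the geometric fact that the paper's proof relies on but leaves implicit: that for small $\|\vla\|$ each cone maps into the correct next cone, which is what guarantees that the $\|\vla\|\diam(K_{\vla})$-neighbourhood reached after one step actually lies in the basin of attraction (the Fagnano points sit only $O(\|\vla\|)$ from the singular ray emanating from the previous vertex, so this is not automatic from the distance estimate alone). The costs are the combinatorial bookkeeping you acknowledge (checking which endpoint of each side carries the singular half-line, hence that $\sigma$ is the positive cyclic shift), plus two small points to tidy: Banach's theorem should be applied on the complete space $\overline{K_{\vla}\cap A_i}$, after which one must verify that the fixed point lies in the open cone rather than on its boundary; and since the radius of $K_{\vla}$ depends on $\|\vla\|$, the threshold $\de$ should be fixed only after bounding that radius uniformly for, say, $\|\vla\|\le 1/2$.
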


\begin{proof}
Suppose that $v_1,\ldots,v_k$ is a Fagnano orbit of $P$. Relabeling the vectors $v_i$, if necessary, and setting $ v_{k+1} = v_{1} $, we can write
\[
w_i = \frac{\la_i}{1+\la_i}v_i+\frac{1}{1+\la_i}v_{i+1}\ \quad \text{for } i=1,\ldots,k.
\]
The previous equations define a linear map $(v_1,\ldots,v_n)\mapsto (w_1,\ldots,w_n)$ which is invertible provided $\la_1\cdots\la_k\neq (-1)^{k} $. Since this condition is sufficient and necessary for the existence of the Fagnano orbit, the Fagnano orbit exists for dissipative outer billiards. 

Every periodic orbit of a dissipative outer billiard is locally attracting. Thus, to prove that the Fagnano orbit is a global attractor for $T_\vla$, it suffices to show that if $ \|\vla\| $ is sufficiently small, then every non-singular point of $ K_{\vla} $ eventually enters a small neighborhood of the Fagnano orbit. This property is a direct consequence of the following fact: for every non-singular $ z \in K_{\vla} \cap A_{i} $ with $ i \in \{1,\ldots,k\} $, we have 
\[
\|T_{\vla}(z)-v_{i+1}\| = \|T_{\vla}(z)-T_{\vla}(v_{i})\| = |\la_{i}| \|z-v_{i}\|\leq \|\vla\|\diam(K_{\vla}).
\]
\end{proof}

\subsection{Skew-product}
\label{su:sk}
In this subsection, we assume that $ P $ is a regular $ k $-gon, and that all the components of $ \vla $ are equal to some $ \la \in (0,1) $. This situation is special,
because the map $ T_{\vla} $ commutes with the rotation of $ \Cc $ about the center of $ P $ by an angle $ 2\pi/k $. This fact allows us to conjugate $ T_{\vla} $ to a skew product transformation. Since $ \vla $ is fixed in this subsection, we will drop the label $ \vla $ from our notation. 

Denote by $ R $ the clockwise rotation of $ \Cc $ about the center of $ P $ by an angle $ 2\pi/k $, and define $ \pi \colon \bigcup_{i} A_{i} \to A_{0} $ by $ \pi(z) = R^{i}(z) $ if $ z \in A_{i} $. Let $ S = A_{0} \cap  T^{-1}(\SSS) $. 

Next, let $ f \colon A_{0} \setminus S \to A_{0} $ be the map defined by $ f(z) = \pi \circ T(z) $ for $ z \in A_{0} \setminus S $. It is easy to see that $ A_{0} \setminus S $ consists of finitely many connected open sets $ D_{1},\ldots,D_{[k/2]+1} $ with boundary contained in $ S $, and that $ f $ is an affine transformation on each $ D_{i} $. Note that the sets $ D_{i} $ are characterized by the property $ T(D_{i}) \subset A_{i} $. Since $ f \circ \pi = \pi \circ T $ by definition, $ f $ is a factor of $ T $. For every integer $ n>0 $, the iterate $ f^{n} $ is defined only on $ A_{0} \setminus S_{n} $, where $ S_{n} $ is the set of $ z \in A_{0} $ such that $ f^{m}(z) \in S $ for some integer $ 0 \le m < n $. Let $ S_{\infty} = \bigcup_{n \ge 0} S_{n} $. Every iterate of $ f $ is defined on $ A_{0} \setminus S_{\infty} $. 

Define $ \sigma \colon A_{0} \setminus S \to \Zz_{k} $ by $ \sigma(z) = i $ whenever $ z \in D_{i} $. Then, the skew-product $ F \colon (A_{0} \setminus S) \times \Zz_{k} \to A_{0} \times \Zz_{k} $ is given by
\[
F(z,j) = (f(z),\sigma(z) + j \mod k), \qquad (z,j) \in   (A_{0} \setminus S) \times \Zz_{k}.
\]
If we set $ \sigma^{n}(z) = \sigma(f^{n-1}z) + \cdots + \sigma(z) $ for $ n \ge 1 $ and $ z \in A \setminus S_{n} $, then we have $ F^{n}(z,j)=(f^{n}(z),\sigma^{n}(z) + j \mod k) $. The transformations $ F $ and $ T $ are clearly conjugated. By abuse of notation, we also use the symbol $ \pi $ for the projection from $ A_{0} \times \Zz_{k} \to A_{0} $ given by $ \pi(z,i) = z $. 

Every periodic orbit of $ F $ (and so of $ T $) is mapped by $ \pi $ into a single periodic orbit of $ f $. The converse relation is less obvious, and is clarified in the next lemma. 

\begin{lemma}
\label{le:periodic}
Suppose that $ z \in A_{0} \setminus S_{\infty} $ is a periodic point of $ f $, i.e., $ f^{m}(z) = z $ for some integer $ m > 0 $. Let $ O_{f}(z) $ be the $ f $-orbit of $ z $. Then the set $ \pi^{-1}(O_{f}(z)) $ consists of $ \gcd(k,\sigma^{m}(z)) $ periodic orbits of $ F $ of period $ m k/\gcd(k,\sigma^{m}(z)) $. 
\end{lemma}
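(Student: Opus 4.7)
The plan is to apply directly the formula $F^{n}(z,j)=(f^{n}(z),\sigma^{n}(z)+j \bmod k)$ established immediately before the lemma. Since the stated formulas force $|\pi^{-1}(O_{f}(z))|=mk$, we may (and must) take $m$ to be the minimal $f$-period of $z$, so $|O_{f}(z)|=m$ and $|\pi^{-1}(O_{f}(z))|=mk$.

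First I would characterise $F$-periodicity in the fibre over $z$: a point $(z,j)\in\pi^{-1}(z)$ is $F$-periodic of period $n$ if and only if $f^{n}(z)=z$ (forcing $m\mid n$, so $n=\ell m$) and $\sigma^{n}(z)\equiv 0\pmod k$. The key input is the cocycle identity $\sigma^{\ell m}(z)=\ell\,\sigma^{m}(z)$ for every $\ell\ge 1$, which holds because $f^{m}(z)=z$ makes the sequence $i\mapsto\sigma(f^{i}(z))$ strictly $m$-periodic, so a sum over $\ell m$ consecutive terms decomposes into $\ell$ identical blocks. Writing $d=\gcd(k,\sigma^{m}(z))$, $k=dt$ and $\sigma^{m}(z)=ds$ with $\gcd(s,t)=1$, the congruence $\ell\,\sigma^{m}(z)\equiv 0\pmod k$ reduces to $t\mid\ell s$, hence to $t\mid\ell$. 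The minimal admissible $\ell$ is therefore $k/d$, giving a minimal $F$-period equal to $mk/d=mk/\gcd(k,\sigma^{m}(z))$, and this value is independent of the choice of $j\in\Zz_{k}$.

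Next I would observe that the same minimal $F$-period appears at every fibre of $O_{f}(z)$: for $z'=f^{a}(z)$, the $m$-periodicity of $i\mapsto\sigma(f^{i}(z))$ yields $\sigma^{m}(z')=\sigma^{m}(z)$, so the previous calculation applies verbatim at $z'$. A simple count then closes the argument: $\pi^{-1}(O_{f}(z))$ has cardinality $mk$ and is partitioned into $F$-orbits of common length $mk/d$, so the number of distinct $F$-orbits is $mk\big/(mk/d)=d=\gcd(k,\sigma^{m}(z))$.

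The only mildly delicate step, which I expect to be the main conceptual obstacle, is establishing the cocycle identity $\sigma^{\ell m}(z)=\ell\,\sigma^{m}(z)$ together with its orbit-invariance $\sigma^{m}(f^{a}(z))=\sigma^{m}(z)$; both rest on the $m$-periodicity of the orbit and the definition $\sigma^{n}(z)=\sigma(f^{n-1}z)+\cdots+\sigma(z)$. Once these are in hand, the remainder of the proof is elementary bookkeeping with $\gcd$'s.
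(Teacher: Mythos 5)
Your proof is correct and follows essentially the same route as the paper: compute the iterates of $F$ on a fibre point using the cocycle identity $\sigma^{\ell m}(z)=\ell\,\sigma^{m}(z)$, identify the common minimal $F$-period $mk/\gcd(k,\sigma^{m}(z))$, and divide the cardinality $mk$ of $\pi^{-1}(O_{f}(z))$ by it. You are in fact slightly more explicit than the paper about points it leaves implicit, namely that $m$ must be taken minimal, the orbit-invariance $\sigma^{m}(f^{a}(z))=\sigma^{m}(z)$, and that any $F$-period of a fibre point is forced to be a multiple of $m$ because $f$ is a factor of $F$.
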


\begin{proof}


Let $ x \in \pi^{-1}(O_{f}(z)) $. Then $ x = (z',i) $ for some $ z' \in O_{f}(z) $ and some $ 0 \le i < k $. For every integer $ j>0 $, we have 
\[ 
F^{jm}(x) = (f^{jm}(z'),j \sigma^{m}(z')+i \mod k) = (z',j \sigma^{m}(z) + i \mod k).
\]
It follows that $ x $ is a periodic point of $ F $ of period equal to 
\[
p:= m \cdot  \min \{j>0 \colon j \sigma^{m}(z) = 0 \mod k\} = mk/\gcd(k,\sigma^{m}(z)).
\] 
This shows that every element of $ \pi^{-1}(O_{f}(z)) $ is a period point of $ F $ of period $ p $. Since $ \pi^{-1}(O_{f}(z)) $ consists of $ c := mk $ elements, we conclude that $ \pi^{-1}(O_{f}(z)) $ is the union of $ c/p = \gcd(k,\sigma^{m}(z)) $ periodic orbits of $ F $.
\end{proof}

\section{Triangular billiards}   
\label{triangle}

In this section we study the dissipative outer billiard about a triangle. We consider the special case $ \la_{1} = \la_{2} = \la_{3} = \la \in (0,1) $. By affine equivariance, we do not lose generality by restricting our analysis to the equilateral triangle $\De$ of
unit side length. 


Let $v=e^{\frac{2\pi}{3} i}$. The corners of $\De$ are $0,1,-v$. Let $T_\la$ be the outer billiard
map on $\De$. Hereafter, we shall omit $\la$ from our notation. Let $A_0$, $A_1$ and $A_2$ be the three cones for $T$. The
apex of $A_0$ is the origin and its sides are the rays $\{a:a\geq0\}$ and $\{bv:b\ge
0\}$. The
cones $A_1$ and $A_2$ are determined by symmetry. 

Recall from Subsection~\ref{su:sk} that $S$ is the set of points in $A_0$ that are mapped by $T$ to the singular set $\SSS$, i.e. $S=A_0\cap T^{-1}(\SSS)$. For the equilateral triangle, $S$ is the ray $\{\la^{-1}v+a (1+v)\,:\, a\geq0\}$.

Note that, $T(A_0)\subset
A_1\cup A_2$. Let $A=A_0$ and let $C=T^{-1}(T(A_0)\cap A_1)$ and $E=T^{-1}(T(A_0)\cap A_2)$. A simple computation shows that
$C$ is the polygonal region in $A$ with one finite side
$[0,\la^{-1}v]$ and two infinite sides: $S$ and the positive real axis. Clearly, $E=A\setminus\overline{C}$. Note that the sets $C$ and $E$ coincide with the sets $D_{1}$ and $D_{2}$ introduced in Subsection~\ref{su:sk}. These sets are depicted in Fig.~\ref{fig:triang1}.
%
\begin{figure}[t]
\begin{center}
\includegraphics[width=1.8in]{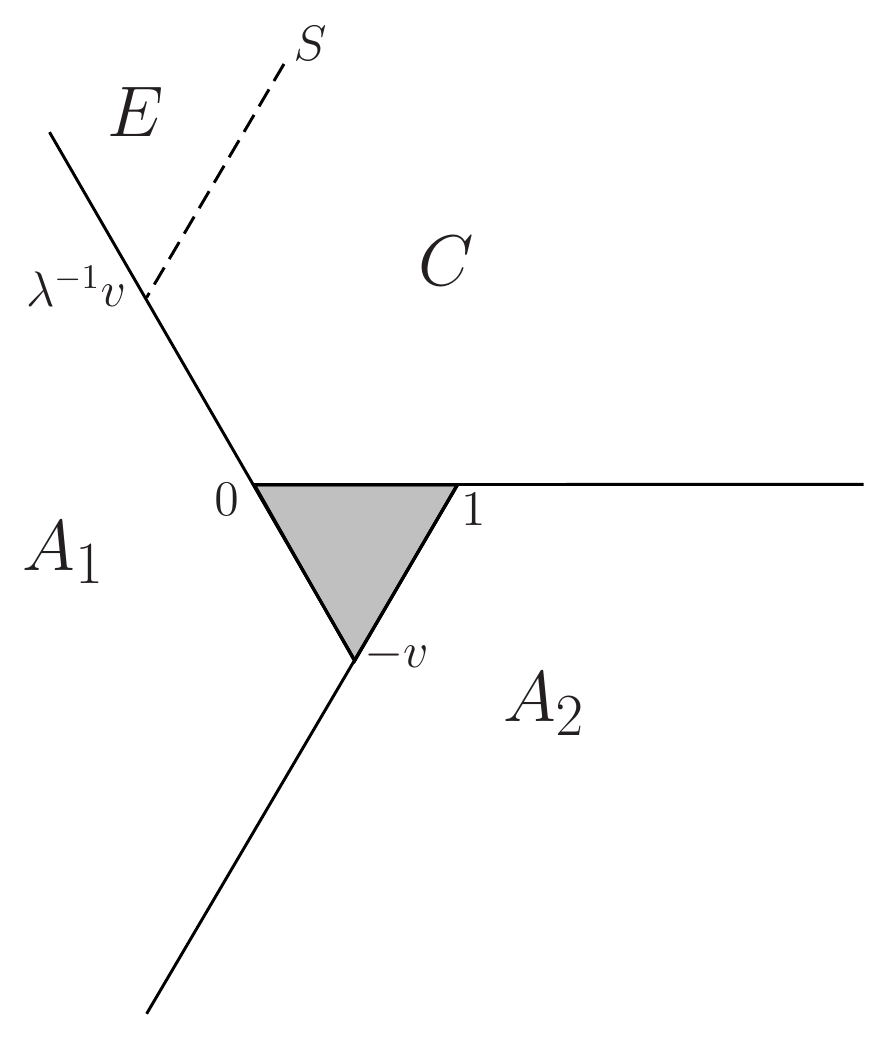}
\end{center}
\caption{Dissipative outer billiard about the equilateral triangle}
\label{fig:triang1}
\end{figure}

We now describe the map $f$. Rotating properly the sets $T(C)$ and $T(E)$ about the center of the
triangle $\De$ we can reduce $T$ to a piecewise affine map
$f \colon A \setminus S\to A$ whose defining regions are
$C$ and $E$. Let $f_C=f|_{C}$ and $f_E=f|_{E}$. It is easy to see that
$$
f_C(z)=1+\la\, (1+v)z\,,\quad f_E(z)=-v - \la\, v z\,.
$$

Note that $f(E)=\{a+b(1+v)\colon a>1\text{ and } b>0\}\subset C$.
Indeed, since any $z\in E$ can be
written in the form $z=\la^{-1} v + a v + b (1+v)$ for some
$a,b>0$, we have
\begin{equation*}
\begin{split}
f_E(z)& =-v - \la\, v(\la^{-1} v + a v + b (1+v))\\
&=1+\la a (1+v)+\la b\,.
\end{split}
\end{equation*}
Thus $f(E)\subset C$. Also note that, $C$ and $E$ depend on $\la$ while
$f(C)$ and $f(E)$ do not. 



Let $B$ be the set of points $a+b(1+v)\in C$ such that $0<b<\la^{-1}+\la^{-2}$. Also, let $C_1=B$ and $C_n$, $n\geq2$ be the set of points $a + b
(1+v)\in C$ such that
$$
\la^{-1}+\cdots+\la^{-2n+2}< b< \la^{-1}+\cdots+\la^{-2n}\,.
$$
The sets $B$ and $C_n$ are depicted in Fig.~\ref{fig:triang2}. 
\begin{figure}[b]
  \begin{center}
  \includegraphics[width=4in]{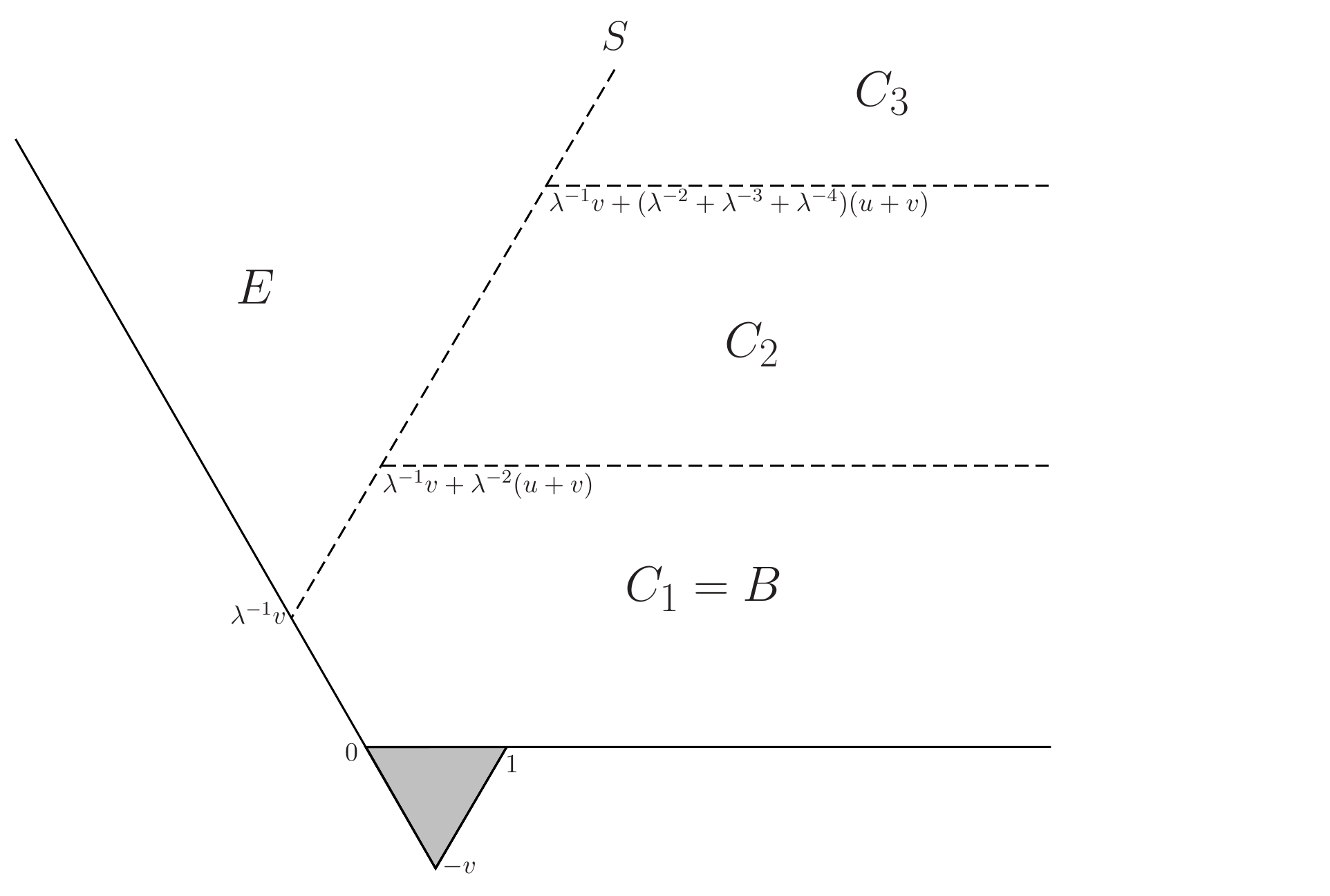}
  \end{center}
  \caption{Sets $B$ and $C_n$.}
  \label{fig:triang2}
\end{figure}


\begin{lemma}\label{lem:BCn}
$f^2(C_n)\subset C_{n-1}$ for every integer $n\geq2$.
\end{lemma}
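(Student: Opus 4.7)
The plan is to translate everything into the affine coordinates $(a,b)$ with $z = a + b(1+v)$, since the defining inequalities for $B$ and the $C_n$ as well as for the subregions $C$ and $E$ are already expressed in these coordinates. Using the identities $(1+v)^2 = v$ and $v+v^2 = -1$, I would first rewrite the two branches of $f$ as affine maps of $\Rr^{2}$:
\[
f_C(a,b) = (1-\la b,\ \la(a+b)), \qquad f_E(a,b) = (1+\la a+\la b,\ -1-\la a),
\]
and then compose them to get the very pleasant expression
\[
f_E\circ f_C(a,b) = \bigl(1+\la+\la^{2}a,\ -1-\la+\la^{2}b\bigr).
\]
The key feature is that on the branch where the first iterate lies in $E$, the $b$-coordinate under $f^{2}$ obeys the one-dimensional affine contraction $b \mapsto -1-\la+\la^{2}b$, which is exactly what the nested intervals defining the $C_n$ are designed to detect.

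Next I would check that for $z \in C_n$ with $n \ge 2$ one indeed has $f_C(z) \in E$, so that $f^{2}(z) = f_E(f_C(z))$. Since the defining condition on $C_n$ forces $b > \la^{-1} + \la^{-2}$, the first coordinate of $f_C(z)$ satisfies $1 - \la b < -\la^{-1}$, while the second coordinate $\la(a+b)$ exceeds $\la^{-1}$ because $a \ge -\la^{-1}$ on $C$. A short verification that the image still lies in the cone $A$ then places $f_C(z)$ in $E$.

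Having established this, I would apply the explicit formula for $f^{2}$ to the endpoints of the $b$-interval defining $C_n$. Using the telescoping identity
\[
\la^{2} \sum_{k=1}^{2m} \la^{-k} = 1 + \la + \sum_{k=1}^{2m-2} \la^{-k},
\]
one reads off that the image $b$-interval is precisely the interval defining $C_{n-1}$, with the convention that the empty sum equals $0$ so as to cover the case $n=2$ where $C_{n-1} = C_1 = B$. Finally, since $a \ge -\la^{-1}$ on $C$, the new first coordinate satisfies $1 + \la + \la^{2}a \ge 1 > -\la^{-1}$, so $f^{2}(z) \in C$, and hence $f^{2}(z) \in C_{n-1}$.

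The whole argument is computational; the only mildly delicate point is confirming that $f_C(z)$ lands in $E$ rather than $C$, so that the second iterate is $f_E\circ f_C$ and not $f_C\circ f_C$. Once this branching issue is settled, the content of the lemma reduces to the scalar telescoping identity above.
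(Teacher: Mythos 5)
Your proof is correct and follows essentially the same route as the paper: verify that $f_C$ sends $C_n$ into $E$, compose to get $f_E\circ f_C(a,b)=(1+\la+\la^2 a,\,\la^2 b-\la-1)$, and check that the $b$-interval defining $C_n$ telescopes onto the one defining $C_{n-1}$ (with the empty-sum convention covering $n=2$). The only differences are cosmetic — you keep the $\{1,1+v\}$ coordinates throughout where the paper briefly switches to $\{v,1+v\}$, and you make the telescoping identity explicit.
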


\begin{proof} 
Let $z\in C_n$ for some integer $n\geq2$. Then $z=a+b(1+v)$ with $a>-\la^{-1}$ and $b>\la^{-1}+\la^{-2}$. Thus
\begin{equation*}
f(z)=f_C(z)=(\la b -1)v+(1+a\la)(1+v)\,.
\end{equation*}
Since, $\la b -1>\la^{-1}$ and $1+a\la>0$ we conclude that $f(z)\in E$. Iterating again we get, $$f^2 (z)=f_E\circ f_C(z)= \alpha + \beta (1+v)$$ where $\alpha=1+\la+a\la^2$ and $\beta=\la^2b-\la-1$. Taking into account that $a>-\la^{-1}$ and $
\la^{-1}+\cdots+\la^{-2n+2}< b< \la^{-1}+\cdots+\la^{-2n}$ we get that
$$
\alpha>1\quad\text{and}\quad \la^{-1}+\cdots+\la^{-2n+4}<\beta<\la^{-1}+\cdots+\la^{-2n+2}\,.
$$
Thus, if $z\in C_n$ then $f^2(z)\in C_{n-1}$.
\end{proof}

Now we study the images of $B$ under iteration of $f$. On can easily show that $f(B)\subset C$ (a consequence of Lemma~\ref{lem:Bn} below). Let $B_n$, $n\geq1$ be the points in $B$ that are mapped to $C_n$ after one iteration of $f$, i.e. $$B_n=f^{-1}(B)\cap C_n\,,\quad n\geq1\,.$$

\begin{lemma}\label{lem:Bn} For every integer $n\geq1$ the set $B_n$ consists of points $a+b(1+v)\in B$ such that $$\la^{-2}+\cdots+\la^{-2n+1}<a+b<\la^{-2}+\cdots+\la^{-2n-1}\,.$$
\end{lemma}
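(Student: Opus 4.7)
The plan is to translate the condition $f(z)\in C_n$ into a condition on $a+b$ by computing $f_C$ directly in the coordinates $(a,b)$ given by $z=a+b(1+v)$. Using the cube-of-unity identities $1+v+v^2=0$, which yield $(1+v)^2=v$ and $v=(1+v)-1$, one obtains
\[
f_C(z)=1+\la(1+v)\bigl(a+b(1+v)\bigr)=(1-\la b)+\la(a+b)(1+v).
\]
Hence $f(z)$ has coordinates $\alpha=1-\la b$ and $\beta=\la(a+b)$; in particular, its $(1+v)$-coordinate is $\beta=\la(a+b)$.

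By definition, $z\in B_n$ iff $f(z)\in C_n$. The defining slab of $C_n$ reads
\[
\la^{-1}+\cdots+\la^{-2n+2}<\beta<\la^{-1}+\cdots+\la^{-2n},
\]
where the left-hand sum is the empty sum (hence $0$) when $n=1$, consistently with $C_1=B$. Substituting $\beta=\la(a+b)$ and dividing by $\la$ gives exactly the bounds on $a+b$ claimed in the lemma.

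It remains to verify that $f(z)\in C$ for every $z\in B$ satisfying these bounds, so that the slab condition alone forces $f(z)\in C_n$. In the coordinates $(a,b)$, the region $C$ is cut out by $b>0$ together with $a>-b$ when $b\le\la^{-1}$ and $a>-\la^{-1}$ when $b>\la^{-1}$. Applying this description to $(\alpha,\beta)$: when $\beta\le\la^{-1}$ the condition $\alpha+\beta>0$ reduces to $1+\la a>0$, which follows from $z\in C$; and when $\beta>\la^{-1}$ the condition $\alpha>-\la^{-1}$ reduces to $\la b<1+\la^{-1}$, i.e.\ $b<\la^{-1}+\la^{-2}$, which is precisely the defining inequality of $B$. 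The argument is essentially routine algebra in the $(a,b)$-basis; the only mildly delicate step is the case analysis in this last paragraph, and it is exactly what explains the appearance of the upper bound $\la^{-1}+\la^{-2}$ in the definition of $B$.
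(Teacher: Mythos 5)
Your proof is correct and follows essentially the same route as the paper: compute $f_C$ in the $(a,b)$-coordinates to get $\alpha=1-\la b$, $\beta=\la(a+b)$, verify $f(B)\subset C$, and translate the slab condition on the $(1+v)$-coordinate $\beta$ into the stated bounds on $a+b$. The only cosmetic difference is that the paper records the exact image $f_C(B)=\{\alpha+\beta(1+v)\colon -\la^{-1}<\alpha<1,\ \beta>0,\ \alpha+\beta>0\}$ instead of running your case analysis on the description of $C$; your leaving the check $\beta>0$ implicit is harmless, since $a+b>0$ already holds for $z\in B$.
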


\begin{proof}
Let $z=a+b(1+v)\in B$. Then $f_C(z)=\alpha+\beta(1+v)$ where
$\alpha=1-\la b$ and $\beta=\la(a+b)$. From $a>-\la^{-1}$, $a+b>0$ and $0<b<\la^{-1}+\la^{-2}$ we conclude that 
$$
f_C(B)=\left\{\alpha+\beta(1+v)\colon-\la^{-1}<\alpha<1\,,\,\beta>0\,\text{ and }\,\alpha+\beta>0\right\}\,.
$$
Thus, $f(B)\subset C$ and $f(B)\cap C_n\neq\emptyset$ for every $n\geq1$. Now $f(z)\in C_n$ if and only if $\la^{-1}+\cdots+\la^{-2n+2}< \beta< \la^{-1}+\cdots+\la^{-2n}$. Taking into account that $\beta=\la(a+b)$ we get the desired inequality.
\end{proof}

\begin{figure}[t]
  \begin{center}
  \includegraphics[width=3.5in]{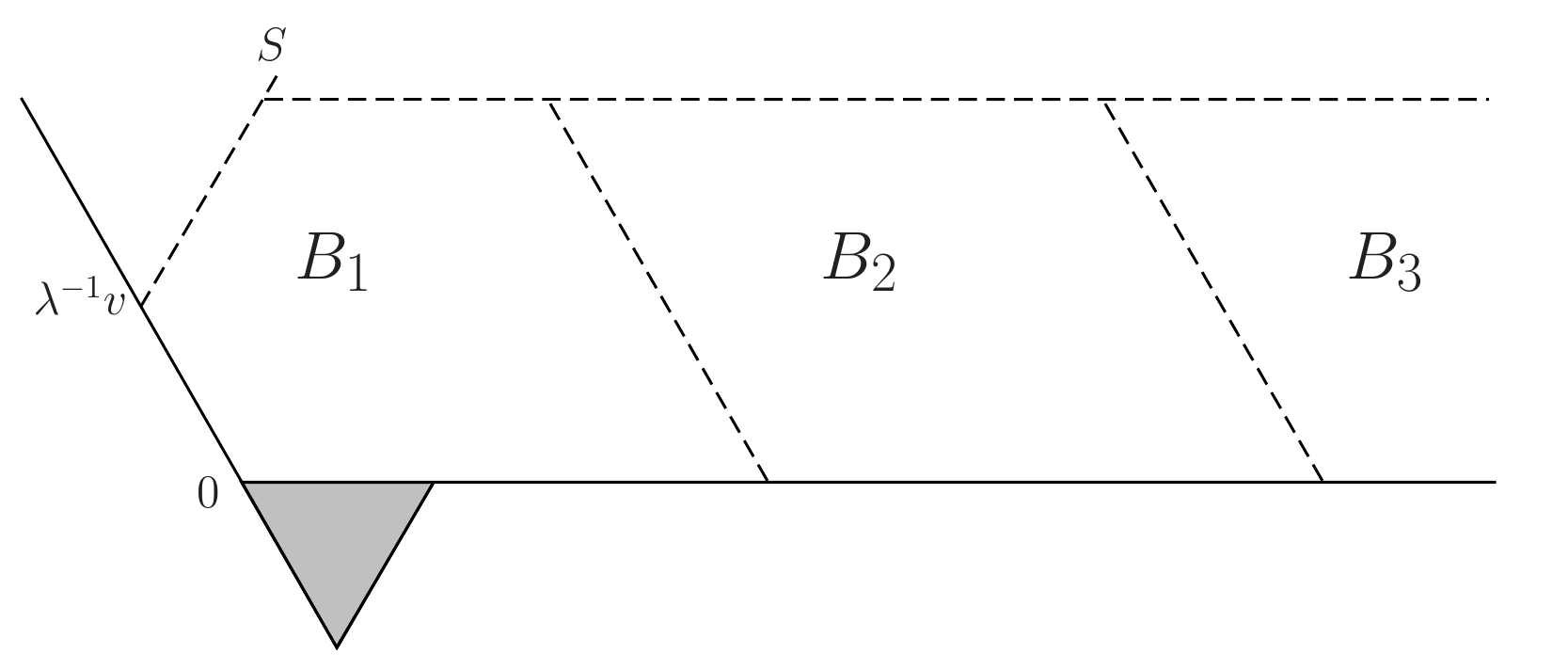}
  \end{center}
  \caption{The sets $B_n$.}
  \label{fig:triang3}
\end{figure}
The sets $B_n$ are depicted in Fig.~\ref{fig:triang3}. Let $B'=\bigcup_{n\geq1} B_n$. By previous considerations, we obtain that every point in $B'$ comes back to $B$ after an odd number of iterations of $f$. In fact, for every $n\geq1$ we have
$$
B_n=\{z\in B' \colon  f^{2n-1}(z)\in B\,\text{and}\,f^k(z)\notin B\,\text{if}\, 1\leq k< 2n-1\}\,.
$$
On $B'$ we define the Poincar\'e map $\phi \colon B'\to B$, i.e. the first return map to $B$. Let $\phi_n$ denote the restriction of $\phi$ to $B_n$.

\begin{lemma} \label{lem:phi}
For every integer $n\geq1$
$$
\phi_n(z)=\la^{2n-2}-v(1+\cdots+\la^{2n-3})+\la^{2n-1}(1+v)z.
$$
\end{lemma}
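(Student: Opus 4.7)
The plan is to identify $\phi_n(z)$ with $f^{2n-1}(z)$ by tracking the exact itinerary of $z \in B_n$ under $f$, and then collapse $f^{2n-1}$ into a closed-form affine map. Since $B_n \subset B \subset C$, the first application of $f$ uses the branch $f_C$ and lands in $C_n$ by the very definition of $B_n$. For $n \geq 2$, the proof of Lemma~\ref{lem:BCn} already verifies that applying $f_C$ to a point of $C_n$ produces a point of $E$, and then applying $f_E$ produces a point of $C_{n-1}$. Iterating this pair of branches, the orbit of $z$ visits in order $C_n, E, C_{n-1}, E, C_{n-2}, \ldots, E, C_1 = B$, and the $(2n-1)$-st iterate is the first to return to $B$ because the intermediate images lie either in $E$ or in some $C_j$ with $j \geq 2$, all of which are disjoint from $B = C_1$. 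For $n=1$ the itinerary degenerates to a single application of $f_C$, giving $\phi_1 = f_C$ directly.

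Once the itinerary is established, I can factor $\phi_n = (f_E \circ f_C)^{n-1} \circ f_C$. The next step is to compute the affine map $g := f_E \circ f_C$ explicitly. A direct substitution using $f_C(w) = 1 + \la(1+v) w$ and $f_E(w) = -v - \la v w$ yields $g(w) = -v - \la v - \la^2 v(1+v) w$, and the identity $v^2 + v + 1 = 0$ (equivalently $v(1+v) = -1$) collapses this to
\[
g(w) = \la^2 w - v(1 + \la).
\]
Thus $g$ is a complex affine contraction of $\Cc$ with multiplier $\la^2$, so standard iteration of a one-dimensional affine map gives
\[
g^{n-1}(w) = \la^{2n-2} w - v(1+\la)\bigl(1 + \la^2 + \cdots + \la^{2n-4}\bigr).
\]
The coefficient $(1+\la)(1 + \la^2 + \cdots + \la^{2n-4})$ telescopes to the unsplit geometric sum $1 + \la + \la^2 + \cdots + \la^{2n-3}$.

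Finally, substituting $w = f_C(z) = 1 + \la(1+v)z$ into $g^{n-1}(w)$ produces the claimed expression
\[
\phi_n(z) = \la^{2n-2} + \la^{2n-1}(1+v)z - v\bigl(1 + \la + \cdots + \la^{2n-3}\bigr),
\]
with the $n=1$ case recovered by interpreting the sum as empty and $\la^{2n-2}=1$, $\la^{2n-1}=\la$. The only mildly delicate step is the itinerary verification in the first paragraph, but every geometric containment required there has already been computed in the proofs of Lemmas~\ref{lem:BCn} and~\ref{lem:Bn}, so no new verification of regions is needed; the remaining work is the purely algebraic simplification that makes essential use of $v^2+v+1=0$.
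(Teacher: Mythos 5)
Your proof is correct and follows essentially the same route as the paper: both establish the factorization $\phi_n=(f_E\circ f_C)^{n-1}\circ f_C$ from Lemma~\ref{lem:BCn} and then carry out the algebra, the only difference being that you iterate the affine contraction $g=f_E\circ f_C$ in closed form (using $v(1+v)=-1$) where the paper invokes an induction on $n$ left to the reader. The itinerary argument and the computations check out, including the $n=1$ degenerate case.
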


\begin{proof}
By Lemma~\ref{lem:BCn}, $\phi_n=(f_E\circ f_C)^{n-1}\circ f_C$. The proof goes by induction on $n$ and is left to the reader.
\end{proof}
%

In the following lemma we prove that $\phi$ has some monotonicity property. 

\begin{lemma}\label{lem:Bnmonotone}
For every integer $n\geq1$ the following holds:
\begin{enumerate}
	\item $\phi(B_n)\subset B_1\cup\cdots\cup B_{n+1}$,
	\item $\phi(\phi(B_n)\cap B')\subset B_1\cup\cdots\cup B_{n}$.
\end{enumerate}
\end{lemma}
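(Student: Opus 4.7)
My plan is to introduce the coordinates $z = x + yv$, so that $z = a + b(1+v)$ corresponds to $(x,y) = (a+b,\,b)$; I also set $u_2 = \la^{-1}+\la^{-2}$, $B_n := (1-\la^{2n-2})/(1-\la)$, $S_n := 1+\la+\cdots+\la^{2n-2}$, and $v_{2k+1} := \la^{-2}+\la^{-3}+\cdots+\la^{-(2k+1)}$ (with the convention $v_1 = 0$), so that $B_n$ becomes the strip $\{v_{2n-1} < x < v_{2n+1},\,0 < y < u_2\}$ intersected with $C$. Using the algebraic identities $(1+v)^2 = v$ and $v(1+v) = -1$, I expand the formula in Lemma~\ref{lem:phi} in the basis $\{1,v\}$ to obtain
\[
\phi_n(x + yv) = x' + y'v, \qquad x' = \la^{2n-2} + \la^{2n-1}(x-y), \qquad y' = -B_n + \la^{2n-1}x.
\]
Two key telescoping identities I will use repeatedly are $\la^{2n-1}v_{2n-1} = B_n$ and $\la^{2n-1}v_{2n+1} = B_n + u_2$.

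For part (1), these identities immediately yield $0 < y' < u_2$ from $v_{2n-1} < x < v_{2n+1}$, so $\phi_n(z) \in B$. I then verify $0 < x' < v_{2n+3}$. The lower bound is immediate from $x - y = a > -\la^{-1}$. For the upper bound, $y > 0$ and $x < v_{2n+1}$ give
\[
x' < \la^{2n-2} + \la^{2n-1}v_{2n+1} = \la^{-2} + \la^{-1} + S_n,
\]
a sum of $2n+1$ terms $\la^{k-2}$ for $k = 1, \ldots, 2n+1$. Comparing with the $2n+2$ terms $\la^{-(k+2)}$ appearing in $v_{2n+3}$, the pairwise inequality $\la^{k-2} < \la^{-(k+2)}$ is equivalent to $\la^{2k} < 1$ and therefore holds; the unmatched term of $v_{2n+3}$ yields strictness. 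This places $\phi_n(z) \in B_1 \cup \cdots \cup B_{n+1}$.

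For part (2), write $w := \phi(z) \in B_m$ for some $m$. By part (1) we have $m \leq n+1$. If $m \leq n-1$, applying part (1) to $w \in B_m$ gives $\phi(w) \in B_1 \cup \cdots \cup B_{m+1} \subset B_1 \cup \cdots \cup B_n$. For the remaining cases $m = n$ and $m = n+1$, I compute $\phi_m \circ \phi_n$ directly: using the identity $\la^{2n-2} + B_n = S_n$, a short calculation gives $x' - y' = S_n - \la^{2n-1}y$, so the new $x$-coordinate of $\phi_m(w)$ is
\[
x'' = \la^{2m-2} + \la^{2m-1}\bigl(S_n - \la^{2n-1}y\bigr).
\]
For $m = n$ this expands to $\sum_{k=0}^{2n-1}\la^{2n-2+k} - \la^{4n-2}y$, and for $m = n+1$ to $\sum_{k=0}^{2n-1}\la^{2n+k} - \la^{4n}y$. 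Since $y > 0$, in both cases $x''$ is strictly less than the corresponding sum of $2n$ powers of $\la$ (with non-negative exponents), which compared term-by-term with the $2n$ negative-exponent terms in $v_{2n+1} = \sum_{j=2}^{2n+1}\la^{-j}$ reduces to inequalities $\la^{2N} < 1$ with $N \geq 1$, all satisfied. Hence $x'' < v_{2n+1}$, which forces $\phi_m(w) \in B_1 \cup \cdots \cup B_n$.

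The main obstacle is the careful bookkeeping of indices and the two telescoping identities relating $\la^{2n-1}v_{2n\pm 1}$ to $B_n$ and $u_2$; once these are in place, every inequality above reduces to comparing equal-length sums of powers of $\la$ (non-negative exponents on one side, exponents at most $-2$ on the other) via $\la < 1$. The cases $n = 1$ and $n \geq 2$ are handled uniformly through the conventions $v_1 = 0$, $B_1 = 0$, $S_1 = 1$.
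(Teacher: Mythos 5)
Your proof is correct and follows essentially the same route as the paper's: your coordinate $x=a+b$ is exactly the quantity $\alpha+\beta$ that the paper tracks, and your case split in part (2) (reduce to $m=n$ and $m=n+1$ via part (1), then compute $x''$ directly) mirrors the paper's cases (a) and (b), ending in the same term-by-term comparison of geometric sums. The only blemish is a harmless off-by-one in the exponent bookkeeping of part (1): the terms of $\la^{-2}+\la^{-1}+S_n$ are $\la^{k-3}$, not $\la^{k-2}$, for $k=1,\dots,2n+1$, so the first comparison with the terms of $v_{2n+3}$ is an equality rather than a strict inequality -- but the unmatched positive term of $v_{2n+3}$ still yields strictness, so the conclusion $x'<v_{2n+3}$ stands.
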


\begin{proof}
We start by proving item (1). Let $z=a+b(1+v)\in B_n$. By Lemma~\ref{lem:phi}, we have $\phi_n(z)=\alpha+\beta(1+v)$ where
\begin{equation*}\begin{split}
\alpha&=1+\cdots+\la^{2n-2}-b\la^{2n-1}\,,\\ 
\beta&=\la^{2n-1}(a+b)-1-\cdots-\la^{2n-3}\,.
\end{split}\end{equation*}
Taking into account the definition of $B_{n+1}$ we only need to show that $\alpha+\beta<\la^{-2}+\cdots+\la^{-2n-3}$. But since, $b>0$ and $a+b<\la^{-2}+\cdots+\la^{-2n-1}$ we conclude that
\begin{align*}
\alpha+\beta&=\la^{2n-2}+\la^{2n-1}(a+b)-b\la^{2n-1}\\
&<\la^{-2}+\la^{-1}+1+\cdots+\la^{2n-2}\\
&<\la^{-2}+\cdots+\la^{-2n-3}
\end{align*}
for every $\la\in(0,1)$, and the first claim follows.

To prove item (2), let $z=a+b(1+v)\in B_n\cap \phi^{-1}(B')$. We want to show that $\phi^2(z)\in B_1\cup\cdots\cup B_{n}$. By item (1), we only need to consider two cases:
\begin{enumerate}
\item[(a)] $z\in B_n\cap \phi^{-1}(B_n)$. Thus $\phi^2(z)=\phi_n^2(z)=\alpha'+\beta'(1+v)$ where $\alpha'+\beta'=\la^{2n-2}(1+\la+\cdots+\la^{2n-1}-b\la^{2n})$. Since $b>0$ we get
$$
\alpha'+\beta'<\la^{-2}+\cdots+\la^{-2n-1}
$$
for every $\la\in(0,1)$. Thus (2) follows.
\item[(b)] $z\in B_n\cap \phi^{-1}(B_{n+1})$. In this case $\phi^2(z)=\phi_{n+1}\circ\phi_n(z)$. As before we have $\phi_{n+1}\circ\phi_n(z)=\alpha'+\beta'(1+v)$ where
$\alpha'+\beta'=\la^{2n}(1+\la+\cdots+\la^{2n-1}-b\la^{2n})$.
Taking into account that $b>0$ we get
$$
\alpha'+\beta'<\la^{-2}+\cdots+\la^{-2n-1}
$$
for every $\la\in(0,1)$. Thus (2) follows again in this case. 
\end{enumerate}
\end{proof}

\begin{lemma}\label{lem:phiN}
For every $\la\in(0,1)$ there exists an integer $N=N(\la)>0$ such that $\phi(B')\subset B_1\cup\cdots\cup B_{N}$. 
\end{lemma}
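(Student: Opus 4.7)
The plan is to exploit the fact that, although $B_n$ moves out to infinity in the $(1,1+v)$-coordinate $a+b$ as $n$ grows, the first-return branch $\phi_n$ contracts by the factor $\la^{2n-1}$, which should exactly offset this recession. Hence the image $\phi(B')$ ought to remain uniformly bounded in the same coordinate, and by the characterization of the $B_m$ in Lemma~\ref{lem:Bn} this will preclude meeting $B_m$ once $m$ exceeds some threshold $N(\la)$.

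Concretely, I would fix $z=a+b(1+v)\in B_n$ and use Lemma~\ref{lem:phi} to write $\phi_n(z)=\alpha+\beta(1+v)$. The same elementary manipulation already performed in the proof of Lemma~\ref{lem:Bnmonotone}(1) yields the key identity $\alpha+\beta=\la^{2n-2}+\la^{2n-1}a$. Combining this with the upper bound $a<\la^{-2}+\la^{-3}+\cdots+\la^{-2n-1}$, inherited from the definition of $B_n$ (Lemma~\ref{lem:Bn}) together with $b>0$, and telescoping the resulting geometric-type sum, gives the uniform estimate
\[
\alpha+\beta \;<\; \la^{-2}+\la^{-1}+1+\la+\cdots+\la^{2n-2} \;<\; \la^{-2}+\la^{-1}+\frac{1}{1-\la} \;=:\; M(\la),
\]
with $M(\la)$ depending on $\la$ alone, not on $n$ or on the particular $z\in B_n$.

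Finally, Lemma~\ref{lem:Bn} shows that $\phi(z)\in B_m$ forces $\alpha+\beta>\la^{-2}+\la^{-3}+\cdots+\la^{-2m+1}$, a quantity that diverges to $+\infty$ as $m\to\infty$. Taking $N=N(\la)$ to be the smallest positive integer for which $\la^{-2}+\cdots+\la^{-2N+1}\ge M(\la)$ therefore rules out $\phi(z)\in B_m$ for any $m>N$, which is exactly the claimed inclusion $\phi(B')\subset B_1\cup\cdots\cup B_N$. I do not anticipate any serious obstacle: the only point of substance is the cancellation $\la^{2n-1}\cdot\la^{-2n-1}=\la^{-2}$ that makes the bound on $\alpha+\beta$ independent of $n$, reflecting the geometric fact that the contraction rate of $\phi_n$ precisely matches the rate at which $B_n$ recedes from the origin.
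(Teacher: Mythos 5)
Your proof is correct and follows essentially the same route as the paper's: the paper establishes the uniform bound $\Re(\phi(z))<1/(1-\la)$ for $z\in B_n$, while you establish the equivalent uniform bound $\alpha+\beta<\la^{-2}+\la^{-1}+1/(1-\la)$ on the $(a+b)$-coordinate, in both cases exploiting the cancellation between the contraction factor $\la^{2n-1}$ of $\phi_n$ and the recession of $B_n$, and then using that the sets $B_m$ escape to infinity in that coordinate. Your write-up just spells out the one-line estimate the paper leaves implicit.
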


\begin{proof}
The claim follows from the fact that, 
$$
\Re(\phi(z))<1+\cdots+\la^{2n}<\frac{1}{1-\la}\quad\forall\,z\in B_n\,.
$$
\end{proof}

Let $S_{\infty}$ be the set of points in $B$ whose orbit eventually intersect a discontinuity point of $\phi$. Let $\psi_n$ denote the affine extension of $\phi_n$ to the complex plane and denote by $z_n$ the fixed point of the extension $\psi_n$. Also denote by $w_n$ the unique fixed point of the map $\psi_{n+1}\circ\psi_n$. Note that $u_n:=\psi_n(w_n)$ is the unique fixed point of the map $\psi_n\circ\psi_{n+1}$. Finally, let $\psi=\{\psi_n\}$ and denote by $\text{Per}(\psi)$ the union of all $z_n$, $w_n$ and $u_n$. 

\begin{proposition}\label{prop:attractor1}
For every $z\in B\setminus S_{\infty}$ there exists an integer $n\geq1$ and $z_*\in\text{Per}(\psi)$ such that $\lim_{k\to\infty}\phi^{2k}(z)=z_*\in \overline{B_n}$.
\end{proposition}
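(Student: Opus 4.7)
The plan is to combine the monotonicity of indices under $\phi^2$ with a piecewise-contraction argument in the bounded region $B_n$.

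\emph{Stabilization of the index.} By Lemma~\ref{lem:phiN}, the iterates satisfy $\phi^{2k}(z) \in B_1 \cup \cdots \cup B_N$ for every $k \geq 1$, and by item~(2) of Lemma~\ref{lem:Bnmonotone} the sequence $n_k$ defined by $\phi^{2k}(z) \in B_{n_k}$ is non-increasing. Being bounded from below by $1$, it must stabilize at some integer $n = n(z) \geq 1$, so $\phi^{2k}(z) \in B_n$ for all $k \geq K$.

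\emph{Determining the branches.} For $k \geq K$, let $m_k$ denote the index with $\phi_n(\phi^{2k}(z)) \in B_{m_k}$. Item~(1) of Lemma~\ref{lem:Bnmonotone} gives $m_k \leq n+1$, while the equality $\phi^{2k+2}(z) = \phi_{m_k}(\phi_n(\phi^{2k}(z))) \in B_n$ together with item~(1) applied to $B_{m_k}$ forces $m_k \geq n-1$. Hence $m_k \in \{n-1, n, n+1\} \cap \{1, 2, \ldots\}$ for all $k \geq K$.

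\emph{Convergence to an element of $\mathrm{Per}(\psi)$.} Using the explicit form in Lemma~\ref{lem:phi}, each affine map $\psi_j$ is a rotation by $\pi/3$ (since $1 + v = e^{i\pi/3}$ has unit modulus) composed with a scaling by $\la^{2j-1}$. Therefore, for each $m \in \{n-1, n, n+1\}$, the composition $\psi_m \circ \psi_n$ is a contraction of $\Cc$ with factor $\la^{2(n+m-1)} \leq \la^2$ and a unique fixed point, equal respectively to $u_{n-1}$, $z_n$, or $w_n$—all of which lie in $\mathrm{Per}(\psi)$. Introduce the cylinder sets
\[
D_n^{(m)} := B_n \cap \phi_n^{-1}(B_m) \cap (\phi_m \circ \phi_n)^{-1}(B_n), \qquad m \in \{n-1, n, n+1\},
\]
so that $B_n \setminus S_\infty$ is partitioned by the $D_n^{(m)}$. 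Using the explicit formulas for $\phi_n$ and the definition of $B_n$, I will verify that each candidate fixed point $u_{n-1}, z_n, w_n$ lies in the closure of its associated cylinder $D_n^{(m)}$, so that once an orbit enters a sufficiently small neighborhood of such a fixed point, subsequent iterates remain in the corresponding cylinder and the orbit converges geometrically to that fixed point at rate $\la^{2(n+m-1)}$.

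\emph{Main obstacle.} The delicate step is proving that the symbolic sequence $(m_k)$ eventually becomes constant, ruling out non-trivial attractors arising from higher-period patterns of the three-branch piecewise contraction on $B_n$. The argument will exploit the uniform contraction bound $\la^2$ together with the explicit geometric disposition of the three candidate fixed points relative to the cylinders $D_n^{(m)}$, ensuring that an orbit cannot asymptotically oscillate between distinct branches.
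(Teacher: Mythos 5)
Your first two steps (stabilization of the even-iterate index $n$ via Lemma~\ref{lem:Bnmonotone}(2) and Lemma~\ref{lem:phiN}, and the constraint $m_k\in\{n-1,n,n+1\}$ on the odd iterates) are exactly the paper's opening moves, and your identification of the three candidate limits $u_{n-1}$, $z_n$, $w_n$ as fixed points of $\psi_{n-1}\circ\psi_n$, $\psi_n$, $\psi_{n+1}\circ\psi_n$ is correct. However, the proof is not complete: the step you yourself label the ``main obstacle'' --- showing that the symbolic sequence $(m_k)$ is eventually constant --- is precisely the heart of the proposition, and you only announce a strategy for it rather than carry it out. Worse, the strategy you sketch is unlikely to work as stated: you propose a local argument (``once an orbit enters a sufficiently small neighborhood of such a fixed point, subsequent iterates remain in the corresponding cylinder''), but the candidate fixed points can lie on the common boundary of adjacent pieces (the paper's proof of Theorem~\ref{th:finaltrianglephi} notes that $u_n\in\partial B_n\cap\partial B_{n+1}$ at $\la=\gamma_{2n-1}$, and $z_n$ sits at a corner of $B_n$ at $\la=\la_{2n-1}$), so a small neighborhood of the limit point need not be contained in a single cylinder and the local trapping argument fails in general.

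The missing step has a much simpler resolution, which is what the paper does: apply Lemma~\ref{lem:Bnmonotone}(2) a second time, now to the \emph{odd} iterates. If $\phi^{2k+1}(w)\in B_{m_k}$, then $\phi^{2k+3}(w)=\phi^{2}\bigl(\phi^{2k+1}(w)\bigr)\in B_1\cup\cdots\cup B_{m_k}$, so the sequence $(m_k)$ is non-increasing; since it is bounded below by $n-1$, it stabilizes at some $m\in\{n-1,n,n+1\}$. From that moment on the even iterates evolve under the single affine contraction $\phi_m\circ\phi_n$ (or $\phi_n^2$ when $m=n$), and convergence to the corresponding fixed point $u_{n-1}$, $z_n$ or $w_n$ is immediate. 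No geometric analysis of the disposition of the fixed points relative to the cylinders $D_n^{(m)}$ is needed; the same monotonicity lemma that stabilized the even-index sequence also stabilizes the odd one.
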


\begin{proof}
Given $z\in B\setminus S_\infty$, by Lemma~\ref{lem:Bnmonotone}, there exist integers $k_0\geq0$ and $n\geq1$ such that $\phi^{2k}(z)\in B_n$ for every $k\geq k_0$. Let $w=\phi^{2k_0}(z)$. Thus $\phi^{2k}(w)\in B_n$ for every $k\geq0$. Again, by Lemma~\ref{lem:Bnmonotone}, $$\phi^{2k+1}(w)\in B_{n-1}\cup B_n \cup B_{n+1}$$ for every $k\geq0$. Thus we have three cases:
\begin{enumerate}
	\item There exists $k_1\geq0$ such that $\phi^{2k_1+1}(w)\in B_{n-1}$. Then, by Lemma~\ref{lem:Bnmonotone} we have that $\phi^{2k+1}(w)\in B_{n-1}$ for every $k\geq k_1$. Thus $\phi^{2k}(w)=(\phi_{n-1}\circ\phi_n)^k(w)$ for every $k\geq k_1$, which implies that $\lim_{k\to\infty}\phi^{2k}(z)= u_{n-1}$.
	\item There exists $k_1\geq0$ such that $\phi^{2k_1+1}(w)\in B_{n}$ and $\phi^{2k+1}(w)\notin B_{n-1}$ for every $k\geq0$. Then, again by Lemma~\ref{lem:Bnmonotone} we have that $\phi^{2k+1}(w)\in B_{n}$ for every $k\geq k_1$. Thus $\phi^{2k}(w)=\phi_n^{2k}(w)$ for every $k\geq k_1$, which implies that $\lim_{k\to\infty}\phi^{2k}(z)= z_{n}$.
	\item $\phi^{2k+1}(w)\in B_{n+1}$ for every $k\geq0$. Then, $\phi^{2k}(w)=(\phi_{n+1}\circ\phi_n)^k(w)$ for every $k\geq 0$, which implies that $\lim_{k\to\infty}\phi^{2k}(z)= w_{n}$.
\end{enumerate}
\end{proof}

Using the terminology introduced in Section~\ref{intro}, we conclude that $ \phi $ is asymptotically periodic. Let $ \omega(\phi) $ denote the $ \omega $-limit set of $ \phi $.

\begin{corollary}
For every $0<\lambda<1$, let $N=N(\la)$ be the integer in Lemma~\ref{lem:phiN}. Then
$\omega(\phi)\subset\{z_1,w_1,u_1,\ldots,z_N,w_N,u_N\}$.
\end{corollary}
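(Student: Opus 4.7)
The plan is to combine Proposition~\ref{prop:attractor1} with the range bound supplied by Lemma~\ref{lem:phiN}. The proposition already provides a trichotomy for the even-iterate limit of $\phi$ on every non-singular $z \in B$: the limit $z_\ast$ is one of $z_n$, $w_n$, or $u_{n-1}$, indexed by the strip $B_n$ that eventually captures the subsequence $\{\phi^{2k}(z)\}_{k \geq k_0}$. Hence the corollary reduces to showing that this strip index $n$ cannot exceed $N$.

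First I would fix $z \in B \setminus S_\infty$ and apply Proposition~\ref{prop:attractor1} to produce integers $k_0 \geq 0$ and $n \geq 1$ with $\phi^{2k}(z) \in B_n$ for all $k \geq k_0$. After enlarging $k_0$ to be at least $1$, the point $\phi^{2k_0-1}(z)$ is a well-defined element of $B'$ (since $z \notin S_\infty$), so Lemma~\ref{lem:phiN} yields
\[
\phi^{2k_0}(z) \;=\; \phi\bigl(\phi^{2k_0 - 1}(z)\bigr) \;\in\; \phi(B') \;\subset\; B_1 \cup \cdots \cup B_N.
\]
Because the strips $B_1,\ldots,B_N$ are pairwise disjoint, the membership $\phi^{2k_0}(z) \in B_n$ forces $n \leq N$.

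Next I would feed this bound into the three cases of the proposition. The limit $z_\ast$ is $z_n$ in case (2), $w_n$ in case (3), or $u_{n-1}$ in case (1); and case (1) already demands $n \geq 2$, since it invokes the strip $B_{n-1}$. Therefore $z_\ast$ lies in $\{z_1,\ldots,z_N\} \cup \{w_1,\ldots,w_N\} \cup \{u_1,\ldots,u_{N-1}\}$. The odd-iterate accumulation points are handled similarly: in case (3) the continuity of $\phi_n$ on $\overline{B_n}$ gives $\phi^{2k+1}(z) \to \psi_n(w_n) = u_n$, and in case (1) the analogous computation yields $w_{n-1}$; both again belong to the target set. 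Taking the union over all non-singular $z$ gives $\omega(\phi) \subset \{z_1,w_1,u_1,\ldots,z_N,w_N,u_N\}$.

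I do not anticipate a real obstacle: the corollary is essentially bookkeeping layered on top of Proposition~\ref{prop:attractor1} and Lemma~\ref{lem:phiN}. The one point worth care is confirming that the strip index $n$ produced by the proposition is literally the one bounded by Lemma~\ref{lem:phiN}, which is clear from the construction of $n$ inside the proof of the proposition.
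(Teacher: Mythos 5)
Your proposal is correct and follows exactly the route the paper takes: the paper's proof is the one-line observation that the claim follows from Lemma~\ref{lem:phiN} and Proposition~\ref{prop:attractor1}, and your write-up simply spells out the bookkeeping (bounding the strip index $n$ by $N$ via $\phi(B')\subset B_1\cup\cdots\cup B_N$, and tracking the odd-iterate limits $u_n$, $w_{n-1}$, $z_n$ in the three cases), all of which checks out.
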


\begin{proof}
The claim follows from Lemma~\ref{lem:phiN} and Proposition~\ref{prop:attractor1}.
\end{proof}

Next, we perform a bifurcation analysis of the $\omega$-limit set of $\phi$. 

%
%

\begin{lemma} \label{lem:psin} Let $z=a+b(1+v)$ such that $z=\psi_n(z)$. Then $z\in B_n$ if and only if $\la^{-2}+\cdots+\la^{-2n+1}<a+b<\la^{-2}+\cdots+\la^{-2n-1}$. 
\end{lemma}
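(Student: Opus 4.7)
The plan is to translate the fixed point equation $z=\psi_n(z)$ into two scalar equations in the basis $\{1,1+v\}$, and then to show that the inequality on $a+b$ in the statement is precisely what is needed to force $z$ to lie in $B_n\subset B\subset C$.

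First I would work out the explicit form of $\psi_n$ in coordinates. Writing $\sigma_m=1+\lambda+\cdots+\lambda^{m-1}$ and using the identity $(1+v)^2=v=-1+(1+v)$, I would rearrange the formula for $\phi_n$ given in Lemma~\ref{lem:phi} into
\[
\psi_n\bigl(a+b(1+v)\bigr)=\bigl(\sigma_{2n-1}-\lambda^{2n-1}b\bigr)+\bigl(\lambda^{2n-1}(a+b)-\sigma_{2n-2}\bigr)(1+v).
\]
Hence the fixed point condition $z=\psi_n(z)$ is equivalent to the pair
\[
a=\sigma_{2n-1}-\lambda^{2n-1}b,\qquad b=\lambda^{2n-1}(a+b)-\sigma_{2n-2}.
\]
In particular, once $s:=a+b$ is known, both $a$ and $b$ are determined.

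The forward implication is immediate from the definition of $B_n$ in Lemma~\ref{lem:Bn}. For the converse, assume $z=\psi_n(z)$ and that $\lambda^{-2}+\cdots+\lambda^{-2n+1}<s<\lambda^{-2}+\cdots+\lambda^{-2n-1}$. The key computation is to multiply this chain of inequalities by $\lambda^{2n-1}$: the lower bound becomes $\sigma_{2n-2}$ and the upper bound becomes $\sigma_{2n-2}+\lambda^{-1}+\lambda^{-2}$. Using the second scalar equation above, this yields
\[
0<b<\lambda^{-1}+\lambda^{-2},
\]
so $z\in B$. Substituting this strict upper bound for $b$ into the first scalar equation gives
\[
a>\sigma_{2n-1}-\lambda^{2n-1}(\lambda^{-1}+\lambda^{-2})=\sigma_{2n-1}-\lambda^{2n-2}-\lambda^{2n-3},
\]
which simplifies to $\sigma_{2n-3}\ge 0>-\lambda^{-1}$ when $n\ge 2$ and to exactly $-\lambda^{-1}$ when $n=1$ (using the strictness to conclude $a>-\lambda^{-1}$). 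Hence $z\in C$, and combined with the hypothesis on $s$ this means $z\in B_n$.

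I expect the only delicate point to be the bookkeeping of geometric sums, in particular handling $n=1$ (where several of the partial sums are empty) and making sure that the strict inequalities propagate correctly so that the boundary cases of $C$ and $B$ are avoided. Everything else is a routine substitution once the fixed point equation is resolved in the $\{1,1+v\}$ basis.
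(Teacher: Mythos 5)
Your proof is correct and follows essentially the same route as the paper: both resolve the fixed-point equation of $\psi_n$ in the $\{1,1+v\}$ basis and observe that the condition $0<b<\la^{-1}+\la^{-2}$ is equivalent, via the $(1+v)$-component equation $b=\la^{2n-1}(a+b)-1-\cdots-\la^{2n-3}$, to the stated bounds on $a+b$. You are in fact slightly more thorough than the paper, which leaves the verification of the remaining membership conditions $a>-\la^{-1}$ and $a+b>0$ implicit.
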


\begin{proof}
By Lemma~\ref{lem:phi}, $\psi_n(z)=a+b(1+v)$ where $b=\la^{2n-1}(a+b)-1-\cdots-\la^{2n-3}$. Thus $0<b<\la^{-1}+\la^{-2}$ if and only if $\la^{-2}+\cdots+\la^{-2n+1}<a+b<\la^{-2}+\cdots+\la^{-2n-1}$. The claim follows.
\end{proof}

%

\begin{proposition}\label{lem:fxpts}
$z_n$ is a fixed point of $\phi_n$ if and only if $\la\in(\la_{2n-1},1)$ where the sequence $\{\la_m\}$ is defined in the Appendix.
\end{proposition}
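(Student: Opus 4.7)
The plan is to apply Lemma~\ref{lem:psin}: a fixed point $z=a+b(1+v)$ of the affine extension $\psi_n$ lies in $B_n$ (and hence is a genuine fixed point of $\phi_n$) if and only if
\[
\la^{-2}+\la^{-3}+\cdots+\la^{-2n+1} < a+b < \la^{-2}+\la^{-3}+\cdots+\la^{-2n-1}.
\]
The task thus reduces to two pieces: first, computing $a+b$ for the unique fixed point $z_n$ of $\psi_n$ as an explicit rational function $R_n(\la)$, and second, translating the two inequalities into a range for $\la$.

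First I would solve $\psi_n(z)=z$ directly from the formula in Lemma~\ref{lem:phi}, obtaining
\[
z_n=\frac{\la^{2n-2}-v\sum_{k=0}^{2n-3}\la^k}{1-\la^{2n-1}(1+v)}.
\]
Using $v=e^{2\pi\i/3}$ (so that $v\bar v=1$ and $1+v+v^2=0$), I would rationalize the denominator, decompose $z_n$ in the $\Rr$-basis $\{1,1+v\}$ of $\Cc$, and read off the real coefficients $a_n(\la)$ and $b_n(\la)$. Summing them produces the explicit rational function $R_n(\la)$.

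Next I would show that the upper inequality $R_n(\la)<\la^{-2}+\cdots+\la^{-2n-1}$ holds for every $\la\in(0,1)$, which would make the bifurcation condition collapse to the single lower inequality. This is essentially forced by Lemma~\ref{lem:Bnmonotone}(1): a fixed point of $\psi_n$ lying above the slab $B_n$ would have to sit in a region where the piecewise map is governed by $\phi_{n+1}\neq\phi_n$, so the monotonicity established there confines the $\psi_n$-fixed point to the upper side of $B_{n-1}$ or interior of $B_n\cup B_{n+1}$; a short check shows that the upper boundary cannot be crossed for $\la\in(0,1)$, since otherwise the upward bifurcation would produce a $\phi$-orbit contradicting the filtration in Lemma~\ref{lem:Bnmonotone}.

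Finally, clearing denominators in the boundary equation $R_n(\la)=\la^{-2}+\cdots+\la^{-2n+1}$ will yield a polynomial equation $P_n(\la)=0$. Evaluating $P_n$ at $\la=1$ (positive, so the fixed point condition holds there) and at $\la$ close to $0$ (where the left-hand side blows up, forcing the condition to fail) shows that $P_n$ has a unique root in $(0,1)$, and that $R_n(\la)>L_n(\la)$ precisely above this root. By the definition of the sequence $\{\la_m\}$ in the Appendix, this root is $\la_{2n-1}$, which gives the claimed range $(\la_{2n-1},1)$. The main obstacle is the bookkeeping inside the rationalization step and the identification of $P_n$ with the polynomial defining $\la_{2n-1}$ in the Appendix; I would double-check the formulas against the base case $n=1$, where $\phi_1=f_C$ has fixed point $z_1=1/(1-\la(1+v))$ and the bifurcation value is read off by inspection.
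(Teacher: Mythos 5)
Your overall strategy coincides with the paper's: reduce via Lemma~\ref{lem:psin} to the two inequalities
\[
\la^{-2}+\cdots+\la^{-2n+1}<a_n+b_n<\la^{-2}+\cdots+\la^{-2n-1},
\]
compute $a_n+b_n$ explicitly from the fixed-point equation for $\psi_n$ (your formula for $z_n$ is correct and leads to $a_n+b_n=\frac{\la^{2n-2}-\la^{4n-3}}{(1-\la)(1-\la^{2n-1}+\la^{4n-2})}$), observe that the upper inequality is vacuous on $(0,1)$, and identify the lower inequality with the sign condition on the Appendix polynomial $p_{2n-1}$. The gap is in your treatment of the upper inequality. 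You claim it is ``essentially forced by Lemma~\ref{lem:Bnmonotone}(1)'', but that lemma only constrains $\phi_n(B_n)$, i.e.\ images of points that actually lie in $B_n$; it says nothing about the location of the fixed point of the \emph{affine extension} $\psi_n$ when that fixed point falls outside $B_n$. A contraction can map a bounded region $B_n$ into $B_1\cup\cdots\cup B_{n+1}$ while its fixed point sits arbitrarily far above $B_{n+1}$ (the weaker the contraction, the farther away the fixed point can be), so no dynamical argument of this kind can close the case. The ``short check'' you defer is in fact the substantive step: substituting the expression for $a_n+b_n$, the upper inequality is equivalent to $1-\la^{2n-1}-\la^{2n}+\la^{4n-2}>0$, which the paper proves directly by noting it exceeds $(1-\la^{2n-1})^2$ for $\la\in(0,1)$. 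Your proof is incomplete without this computation.

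Two smaller points. First, after clearing denominators the lower inequality becomes $p_{2n-1}(\la)=1-\la^{2n-2}-\la^{2n-1}+\la^{4n-2}<0$, and $p_{2n-1}(1)=0$, not a positive value; the Appendix handles this by factoring $p_m(x)=(x-1)q_m(x)$ and applying Descartes' rule to $q_m$. Relatedly, sign changes at the endpoints of $(0,1)$ only give existence of a root, not uniqueness, so you should lean on Lemma~\ref{le:solutionQ} (which supplies both the uniqueness of $\la_{2n-1}$ and the negativity of $p_{2n-1}$ on $(\la_{2n-1},1)$) rather than rederive these facts. Your base-case check at $n=1$ ($z_1=1/(1-\la(1+v))$, $\la_1=0$) is correct and consistent with the statement.
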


\begin{proof}
We want to show that $z_n\in B_n$ if and only if $\la\in(\la_{2n-1},1)$. By Lemma~\ref{lem:phi}, $z_n=a_n+b_n(1+v)$ where 
$$
a_n+b_n=\frac{\lambda ^{2 n-2}-\lambda ^{4 n-3}}{(1-\lambda ) \left(1-\lambda ^{2 n-1}+\lambda ^{4 n-2}\right)}\,.
$$
By Lemma~\ref{lem:psin}, $z_n\in B_n$ if and only if
$$
\la^{-2}+\cdots+\la^{-2n+1}<a_n+b_n<\la^{-2}+\cdots+\la^{-2n-1}\,.
$$
Substituting the expression for $a_n+b_n$ into the previous inequalities we get
$$
\la^{2}(1-\la^{2n-2})<\frac{\lambda ^{4 n-1} \left(1-\lambda ^{2 n-1}\right)}{1-\lambda ^{2 n-1}+\lambda ^{4 n-2}}<1-\la^{2n}\,.
$$
We study each inequality separately. The right-hand side inequality is equivalent to $$1-\la^{2n-1}-\la^{2n}+\la^{4n-2}>0\,,$$ which holds for every $\la\in(0,1)$. Indeed,
$$
1-\la^{2n-1}-\la^{2n}+\la^{4n-2}>(1-\la^{2n-1})^2>0\,.
$$
The left-hand side inequality is equivalent to 
$$
p_{2n-1}(\la)=1-\lambda ^{2 n-2}-\lambda ^{2 n-1}+\lambda ^{4 n-2}<0\,.
$$
By Lemma~\ref{le:solutionQ}, we conclude that $p_{2n-1}(\la)<0$ for every $\la\in (\la_{2n-1},1)$. This concludes the proof of the proposition. 
\end{proof}


\begin{proposition}\label{prop:per2}
$w_n$ is a fixed point of $\phi_{n+1}\circ\phi_n$ if and only if $\la\in(\gamma_{2n-1},1)$ where the sequence $\{\gamma_m\}$ is defined in the Appendix.
\end{proposition}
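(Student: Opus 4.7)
The plan is to mirror the argument of Proposition~\ref{lem:fxpts}. Write $w_n = a_n + b_n(1+v)$ and $u_n = \psi_n(w_n) = c_n + d_n(1+v)$. Since the affine map $\psi_{n+1}\circ\psi_n$ has a unique fixed point by construction, $w_n$ is an actual fixed point of the piecewise map $\phi_{n+1}\circ\phi_n$ if and only if $w_n\in B_n$ \emph{and} $u_n\in B_{n+1}$. So the whole proof reduces to translating these two memberships into a single sharp inequality on $\lambda$.

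First I compute $w_n$ explicitly. Reading off the formula from Lemma~\ref{lem:phi} and the proof of Lemma~\ref{lem:Bn}, the action of $\psi_n$ on the scalar coordinate $a+b$ is affine,
\[
(a+b)\;\longmapsto\;\lambda^{2n-1}(a+b)\;-\;\frac{1-\lambda^{2n-2}}{1-\lambda},
\]
after summing the geometric series that appear in the constant term. Composing with the analogous expression for $\psi_{n+1}$ and solving the resulting fixed-point equation yields a closed rational expression for $a_n+b_n$ with denominator $1-\lambda^{4n}$ (up to sign), and then one reads off $c_n+d_n$ by applying the scalar map once.

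Second, I translate the memberships $w_n\in B_n$ and $u_n\in B_{n+1}$ into four inequalities via Lemma~\ref{lem:Bn}: two bounds on $a_n+b_n$ from $w_n\in B_n$ and two bounds on $c_n+d_n$ from $u_n\in B_{n+1}$. The scalar action above converts the latter pair into further inequalities on $a_n+b_n$, and substituting its closed formula yields four polynomial inequalities in $\lambda$. As in Proposition~\ref{lem:fxpts}, three of the four inequalities (both "upper" bounds and one of the two "lower" bounds) rearrange into sums of nonnegative monomials plus a square and therefore hold on all of $(0,1)$ by elementary factoring, while the remaining lower bound reduces to $q_{2n-1}(\lambda)<0$, where $q_{2n-1}$ is precisely the polynomial whose unique root in $(0,1)$ is $\gamma_{2n-1}$. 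Invoking the appendix lemma analogous to Lemma~\ref{le:solutionQ} finishes the argument.

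The main obstacle is algebraic bookkeeping: the fixed-point equation for the composition has denominator of degree roughly $4n$, so verifying that three of the four inequalities simplify to manifestly positive expressions requires careful grouping of terms. A useful simplification is to work throughout in the scalar $(a+b)$-coordinate, since by Lemmas~\ref{lem:Bn} and~\ref{lem:phi} both the dynamics of $\phi_n$ and the defining inequalities of $B_m$ depend only on this coordinate; this collapses the whole computation to a one-dimensional affine contraction and removes $v$ from the bookkeeping.
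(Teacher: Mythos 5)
Your overall strategy is the paper's: characterize ``$w_n$ is a fixed point of $\phi_{n+1}\circ\phi_n$'' as the pair of memberships $w_n\in B_n$ and $u_n\in B_{n+1}$, convert these via Lemma~\ref{lem:psin} into inequalities on the scalar $a+b$, and reduce to a single polynomial inequality governed by the appendix (Lemma~\ref{le:solutionT}). That framing is correct. But the computational device on which your whole execution rests is false: the maps $\psi_n$ do \emph{not} act affinely on the coordinate $a+b$ as a function of $a+b$ alone, so the problem does not collapse to a one-dimensional affine contraction. Indeed, from the proof of Lemma~\ref{lem:Bnmonotone}, if $z=a+b(1+v)$ and $\psi_n(z)=\alpha+\beta(1+v)$, then
\[
\alpha+\beta=\la^{2n-2}+\la^{2n-1}(a+b)-\la^{2n-1}b=\la^{2n-1}a+\la^{2n-2},
\]
which depends on $a$ individually, not on $a+b$. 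The reason is structural: the linear part of $\psi_n$ is multiplication by $\la^{2n-1}(1+v)$, a contraction composed with a rotation by $\pi/3$, and the real-linear functional $z\mapsto a+b$ is not an eigencoordinate of such a rotation. (Your claimed scalar recursion $(a+b)\mapsto\la^{2n-1}(a+b)-(1-\la^{2n-2})/(1-\la)$ is in fact the formula for the new $b$, not for the new $a+b$.) The error is not cosmetic: composing your scalar maps gives a fixed value of $a+b$ with denominator $1-\la^{4n}$ and negative sign, whereas the correct value, obtained by solving the genuine two-dimensional fixed-point equation for $\psi_{n+1}\circ\psi_n$ and only then evaluating $a+b$, is
\[
a_n+b_n=\frac{\la^{2n}\left(1-\la^{6n-1}\right)}{(1-\la)\left(1+\la^{4n}+\la^{8n}\right)}>0 .
\]
The factor $1+\la^{4n}+\la^{8n}$ appears because the linear part of $\psi_{n+1}\circ\psi_n$ is $\la^{4n}(1+v)^2=\la^{4n}v$, and one must pass to its cube $\la^{12n}$ to rationalize the fixed-point formula; a one-dimensional reduction cannot see this. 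With the wrong formula for $a_n+b_n$ (and hence for $\alpha_n+\beta_n=\la^{2n-1}a_n+\la^{2n-2}$), the four membership inequalities you derive would be the wrong polynomials, so the argument fails as written.

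A secondary inaccuracy: after substitution the four inequalities do not split as ``three automatic, one binding.'' In the correct computation two of the four coincide, leaving three polynomial conditions $I_1<0$, $I_2>0$, $I_3<0$; one checks $I_2>0$ on all of $(0,1)$ and $I_1<I_3$ pointwise (so $I_3<0$ implies $I_1<0$), and only then does the single condition $I_3<0$ remain, which Lemma~\ref{le:solutionT} identifies with $\la\in(\gamma_{2n-1},1)$. To repair your proof, keep your outline but solve the full two-dimensional (equivalently, complex) fixed-point equation for $\psi_{n+1}\circ\psi_n$ before projecting onto $a+b$.
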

\begin{proof}
We want to show that $w_n\in B_n$ and $u_n\in B_{n+1}$ if and only if $\la\in(\gamma_{2n-1},1)$. By Lemma~\ref{lem:phi}, $w_n=a_n+b_n(1+v)$ and $u_n=\psi_n(w_n)=\alpha_n+\beta_n(1+v)$ where 
$$
a_n+b_n=\frac{\lambda ^{2 n} \left(1-\lambda ^{6 n-1}\right)}{(1-\lambda ) \left(1+\lambda ^{4 n}+\lambda ^{8 n}\right)}\,,\quad
\alpha_n+\beta_n=\la^{2n-1}a_n+\la^{2n-2}\,.
$$
By Lemma~\ref{lem:psin}, $w_n\in B_n$ and $u_n\in B_{n+1}$ if and only if
\begin{gather*}
\la^{-2}+\cdots+\la^{-2n+1}<a_n+b_n<\la^{-2}+\cdots+\la^{-2n-1}\,,\\
\la^{-2}+\cdots+\la^{-2n-1}<\alpha_n+\beta_n<\la^{-2}+\cdots+\la^{-2n-3}\,.
\end{gather*}
Substituting the expressions for $a_n+b_n$ and $\alpha_n+\beta_n$ into the previous inequalities we get
\begin{gather*}
\la^{2}(1-\la^{2n-2})<\frac{\lambda ^{4 n+1} \left(1-\lambda ^{6 n-1}\right)}{ 1+\lambda ^{4 n}+\lambda ^{8 n}}<1-\la^{2n}\,,\\
\la^{2}(1-\la^{2n})<\frac{\lambda ^{4n+1} \left(1-\lambda ^{6 n+1}\right)}{1+\lambda ^{4 n}+\lambda ^{8 n}}<1-\la^{2n+2}\,.
\end{gather*}
It turns out that, among these four inequalities, there are two which are equivalent. In fact, we can reduce the above chain of inequalities to the following system:
\begin{equation*}
\begin{cases}
1-\lambda ^{2 n-2}-\lambda ^{4 n-1}+\lambda ^{4 n}-\lambda ^{6
n}+\lambda ^{8 n}<0\,,\\
1-\lambda ^{2 n}+\lambda ^{4 n}-\lambda ^{4 n+1}-\lambda ^{6 n}+\lambda ^{8 n}>0\,,\\
1-\lambda ^{2 n}-\lambda ^{4 n-1}+\lambda ^{4 n}-\lambda ^{6
n}+\lambda ^{8 n}<0\,.
\end{cases}
\end{equation*}
We claim that this system of inequalities holds for every $\la\in(\gamma_{2n-1},1)$. Let $I_1$, $I_2$ and $I_3$ denote the first, second and third polynomials in the previous system. Note that $I_1(\la)<I_3(\la)$ and $I_2(\la)>0$ for every $\la\in(0,1)$. Indeed, $I_3(\la)-I_1(\la)=\la^{2n-2}(1-\la^{2})>0$. Moreover, since $\la^{2n}+\la^{6n}>2\la^{4n}$ and
$\la^{4n}>\la^{4n+1}$ for every $\la\in(0,1)$ we get
$$
I_2(\la)>1-4\lambda ^{2 n}+6\lambda ^{4n}-4\lambda ^{6 n}+\lambda ^{8 n}=(1-\la^{2n})^8>0\,.
$$
Finally, by Lemma~\ref{le:solutionT}, $I_3(\la)<0$ for every $\la\in(\gamma_{2n-1},1)$ and the proof of the proposition is complete.
\end{proof}

\begin{remark}
Taking into account the return time to $B$, we conclude that the fixed point of $\phi_n$ corresponds to a period orbit of period $2n-1$ of $f$ and the fixed point of $\phi_{n+1}\circ\phi_n$ corresponds to a periodic orbit of period $4n$ of $f$.
\end{remark}


For every $\la\in(0,1)$ let $m_1=m_1(\la)>0$ and $m_2=m_2(\la)>0$ be the following integers,
\begin{align*}
m_1&=\max\{n\in \Nn\colon \la_{2n-1}<\la \leq\la_{2n+1}\}\,,\\
m_2&=\max\{n\in \Nn\colon \gamma_{2n-1}<\la\leq\gamma_{2n+1}\}\,.
\end{align*}
The following result completely describes the bifurcations of the $\omega$-limit set of $\phi$.

\begin{theorem}\label{th:finaltrianglephi}
For every $\la\in(0,1)$ we have, $$\omega(\phi)=\{z_1,\ldots,z_{m_1},w_1,u_1,\ldots,w_{m_2},u_{m_2}\}\,.$$
\end{theorem}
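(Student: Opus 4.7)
The plan is to deduce the equality from results already in hand. The relevant ingredients are the inclusion $\omega(\phi)\subset\{z_1,w_1,u_1,\ldots,z_N,w_N,u_N\}$ provided by the corollary to Proposition~\ref{prop:attractor1}, together with the explicit existence criteria of Propositions~\ref{lem:fxpts} and \ref{prop:per2}. The only external input I need is the monotonicity of the bifurcation sequences $\{\la_m\}$ and $\{\gamma_m\}$ established in the Appendix; this is what lets the conditions $\la>\la_{2n-1}$ and $\la>\gamma_{2n-1}$ be repackaged as the sharp index bounds $n\le m_1$ and $n\le m_2$.

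For the inclusion $\omega(\phi)\subset\{z_1,\ldots,z_{m_1},w_1,u_1,\ldots,w_{m_2},u_{m_2}\}$, I would proceed as follows. A point belonging to $\omega(\phi)$ must be a genuine periodic point of $\phi$, not merely a fixed point of one of the affine extensions $\psi_n$ or $\psi_{n+1}\circ\psi_n$ that was used to define $z_n,w_n,u_n$. Proposition~\ref{lem:fxpts} tells us that $z_n$ actually lies in $B_n$ (so is a true fixed point of $\phi_n$, hence of $\phi$) exactly when $\la\in(\la_{2n-1},1)$; by monotonicity of $\{\la_m\}$ this is equivalent to $n\le m_1$. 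Proposition~\ref{prop:per2} combined with monotonicity of $\{\gamma_m\}$ yields the analogous statement $n\le m_2$ for the period-two pair $\{w_n,u_n\}$. Assembling this with the aforementioned corollary gives one inclusion.

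For the reverse inclusion $\supset$, I would argue that each alleged periodic orbit truly contributes a point of $\omega(\phi)$. Fix $n\le m_1$: by Proposition~\ref{lem:fxpts}, $z_n$ lies in the open region $B_n\subset B\setminus S$, so its $\phi$-orbit is the singleton $\{z_n\}$, which never meets the singular set; hence $z_n\notin S_\infty$ and $\omega(z_n)=\{z_n\}\subset\omega(\phi)$. Analogously, for each $n\le m_2$, Proposition~\ref{prop:per2} gives $w_n\in B_n$ and $u_n=\phi(w_n)\in B_{n+1}$, so $\{w_n,u_n\}$ is a genuine period-two orbit of $\phi$ lying in the non-singular set, which places both points in $\omega(\phi)$.

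I do not anticipate any substantial obstacle: the nontrivial work (asymptotic convergence to fixed points of $\psi$, the existence thresholds for each candidate orbit, and the pigeonhole bound $N(\la)$) has already been carried out. The only delicate point is bookkeeping, namely carefully distinguishing fixed points of the linear extensions from true periodic points of $\phi$, and applying the monotonicity of $\{\la_m\},\{\gamma_m\}$ to translate the interval conditions on $\la$ into the index ranges $n\le m_1$ and $n\le m_2$ used in the definitions of $m_1,m_2$.
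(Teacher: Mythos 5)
Your overall strategy matches the paper's for generic $\la$, but there is a genuine gap: the assertion that ``a point belonging to $\omega(\phi)$ must be a genuine periodic point of $\phi$'' is precisely what is not automatic, and it is the part of the theorem that requires new work. Proposition~\ref{prop:attractor1} only guarantees that the even iterates of a non-singular point converge to some $z_*\in\mathrm{Per}(\psi)$ lying in the \emph{closure} $\overline{B_n}$. Hence a fixed point of an affine extension that sits on $\partial B_n$ --- and is therefore \emph{not} a fixed point of $\phi_n$ --- can still a priori be an accumulation point of orbits of non-singular points, and thus belong to $\omega(\phi)$ by definition. This is exactly the situation at the bifurcation values: when $\la=\la_{2n-1}$ the point $z_n$ lands on the bottom-left corner of $B_n$ (one of the inequalities in Lemma~\ref{lem:psin} degenerates to an equality), and when $\la=\gamma_{2n-1}$ the point $u_n$ lands on the common boundary of $B_n$ and $B_{n+1}$; in both cases $m_1$ (resp.\ $m_2$) is defined so that these points are claimed \emph{not} to belong to $\omega(\phi)$, and your argument gives no reason why nearby orbits do not converge to them.

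For $\la\notin\{\la_{2n-1}\}\cup\{\gamma_{2n-1}\}$ your reasoning can be repaired easily: for $n>m_1$ the point $z_n$ then lies strictly outside $\overline{B_n}$ (the defining inequalities are strict), so it cannot arise as a limit in Proposition~\ref{prop:attractor1}; this is essentially all the paper says about the generic case. At the bifurcation values, however, the paper needs an additional geometric argument: by Lemma~\ref{lem:phi}, $\psi_n$ is a contraction composed with a rotation by $\pi/3$ about $z_n$ (and $\psi_n\circ\psi_{n+1}$ a contraction composed with a rotation by $2\pi/3$ about $u_n$), so points of $B_n$ near the boundary point are rotated out of $B_n$ into $B_{n-1}$ before they can converge, and by the monotonicity in Lemma~\ref{lem:Bnmonotone} they never return; hence no orbit accumulates at $z_n$ (resp.\ $u_n$). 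You would need to supply this step, or an equivalent one, for your proof to cover every $\la\in(0,1)$ as the theorem claims. Your argument for the reverse inclusion is correct.
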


\begin{proof}
When $\la\notin\{\la_{2n-1}\}\cup\{\gamma_{2n-1}\}$, the claim follows from Proposition~\ref{prop:attractor1}, \ref{lem:fxpts} and \ref{prop:per2}. 

So, suppose that $\la=\la_{2n-1}$ for some $n\geq2$. We want to show that $z_n\notin\omega(\phi)$. It follows from the proof of Proposition~\ref{lem:fxpts}, that $z_n=a_n+b_n(1+v)$ where $a_n=\la^{-2}+\cdots+\la^{-2n+1}$ and $b_n=0$. Thus, $z_n$ is located at the bottom left corner of $B_n$. Since, by Lemma~\ref{lem:phi}, $\psi_n$ is a composition of an anticlockwise rotation of angle $\pi/3$ and a contraction towards $z_n$, we conclude that under the iteration of $\phi_n$ every point close to $z_n$ will eventually enter $B_{n-1}$. Thus, moving away from $z_n$. This proves that $z_n\notin\omega(\phi)$. 

Now suppose that $\la=\gamma_{2n-1}$ for some $n\geq2$. As before we want to show that $w_n,u_n\notin\omega(\phi)$. It is sufficient to show that $u_n\notin\omega(\phi)$. It follows from the proof of Proposition~\ref{prop:per2}, that $u_n=\alpha_n+\beta_n(1+v)$ where $\alpha_n+\beta_n=\la^{-2}+\cdots+\la^{-2n-1}$. Hence $u_n$ belongs to the common boundary of $B_n$ and $B_{n+1}$. Since, by Lemma~\ref{lem:phi}, $\psi_n\circ\psi_{n+1}$ is composition of an anticlockwise rotation of angle $2\pi/3$ and a contraction towards $u_n$, we conclude that under the iteration of $\phi_n\circ\phi_{n+1}$ every point in $B_{n+1}$ close to $u_n$ will eventually enter $B_{n}$. Thus, by Lemma~\ref{lem:Bnmonotone}, every point close to $u_n$ will eventually enter $B_1\cup\cdots\cup B_n$ and never leave this set. This proves that $u_n\notin\omega(\phi)$ and concludes the proof of the theorem.
\end{proof}

Denote by $ \dist $ the Euclidean distance on $ \Cc $. The following result is a consequence of Theorem~\ref{th:finaltrianglephi} and Lemma~\ref{le:periodic}.

\begin{corollary}
\label{co:finaltriangle}
For every $ \la \in (0,1) $, the map $ T $ has exactly $ m_1=m_1(\la) $ distinct periodic orbits $ \Gamma_{1,i} $, $i=1,\ldots,m_1$, each one having period $ 3(2i-1) $ and exactly $m_2=m_2(\la)$ distinct periodic orbits $ \Gamma_{2,i} $, $i=1,\ldots,m_2$, each one having period $ 12i $. Moreover, for every non-singular $ z \in X $, there exist $i\in\{1,2\}$ and $ 1 \le j \le m_i $ such that $ \dist(T^{n}(z),\Gamma_{i,j}) \to 0 $ as $ n \to +\infty $. (see Fig.~\ref{fi:basintriangle}).
\end{corollary}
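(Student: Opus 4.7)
The plan is to lift Theorem~\ref{th:finaltrianglephi} first from the Poincar\'e return map $\phi$ to the factor map $f$, and then from $f$ to $T$ via the skew-product conjugacy of Subsection~\ref{su:sk}.

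The passage from $\phi$ to $f$ is bookkeeping: since $\phi_n = (f_E \circ f_C)^{n-1} \circ f_C$ agrees on $B_n$ with $f^{2n-1}$, the fixed point $z_n$ of $\phi_n$ is a period-$(2n-1)$ point of $f$, and a two-cycle $\{w_n,u_n\}$ of $\phi$ with $u_n \in B_{n+1}$ produces an $f$-periodic orbit of period $(2n-1)+(2n+1)=4n$ (the Remark after Proposition~\ref{prop:per2}). To pass to $T$, recall that the skew product reads $F(z,j) = (f(z), \sigma(z)+j \bmod 3)$ with $\sigma \equiv 1$ on $C = D_1$ and $\sigma \equiv 2$ on $E = D_2$. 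Unrolling the word $(f_E \circ f_C)^{n-1} \circ f_C$ shows that the $f$-orbit of $z_n$ visits $C$ exactly $n$ times and $E$ exactly $n-1$ times, hence $\sigma^{2n-1}(z_n) = n + 2(n-1) = 3n-2$; the analogous count for $\{w_n,u_n\}$ yields $\sigma^{4n}(w_n) = (3n-2) + (3n+1) = 6n-1$. Since both $3n-2$ and $6n-1$ are coprime to $3$, Lemma~\ref{le:periodic} with $k=3$ produces from each $f$-periodic orbit \emph{exactly one} $T$-periodic orbit, of period $3(2n-1)$ in the first case and $12n$ in the second. Combining this bijection with Theorem~\ref{th:finaltrianglephi} delivers exactly $m_1$ orbits $\Gamma_{1,i}$ and exactly $m_2$ orbits $\Gamma_{2,i}$ of the stated periods.

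For convergence, take $z \in X$ non-singular. Using the rotation $R$ we may assume $\pi(z) \in A_0$. Since $f(E) \subset C$ and Lemma~\ref{lem:BCn} makes every $C_n$ descend to $C_{n-1}$ in two $f$-iterations, the $f$-orbit of $\pi(z)$ enters $B = C_1$ after finitely many steps; from there, Proposition~\ref{prop:attractor1} and Theorem~\ref{th:finaltrianglephi} force $\phi^k(\pi(z)) \to z_\ast$ for some $z_\ast \in \omega(\phi)$. For every $\la$ outside $\{\la_{2n-1}\} \cup \{\gamma_{2n-1}\}$, the limit $z_\ast$ lies in the interior of a single $B_n$, so the continuity of the two affine branches $f_C$ and $f_E$ on their domains propagates the convergence to all intermediate $f$-iterates, and the skew-product conjugacy then yields $\dist(T^n(z), \Gamma_{i,j}) \to 0$. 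At the bifurcation parameters themselves the boundary candidates $z_n$ and $u_n$ were already removed from $\omega(\phi)$ in the proof of Theorem~\ref{th:finaltrianglephi}, so this case does not arise.

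The most delicate step is the convergence argument's handling of the intermediate $f$-iterates between successive returns to $B$, which is what forces one to check that $z_\ast$ lies in the interior of a single $B_n$ whenever $z_\ast \in \omega(\phi)$. Everything else reduces to finite symbolic dynamics over the alphabet $\{C,E\}$ and the elementary observation $\gcd(3,3n-2) = \gcd(3,6n-1) = 1$.
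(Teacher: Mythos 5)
Your proof is correct and follows essentially the same route as the paper: lift Theorem~\ref{th:finaltrianglephi} from $\phi$ to $f$, compute $\sigma^{2n-1}(z_n)=3n-2$ and $\sigma^{4n}(w_n)=6n-1$, and invoke Lemma~\ref{le:periodic} with the coprimality to $3$ to get one $T$-orbit of period $3(2n-1)$ or $12n$ per $f$-orbit, then transfer convergence through the factor map. You are in fact slightly more explicit than the paper on two points it glosses over — that every non-singular $f$-orbit eventually enters $B$, and that convergence of the return map propagates to the intermediate $f$-iterates — both handled correctly.
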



\begin{figure}
\begin{center}
\includegraphics[trim=0.15cm 0.15cm 0.15cm 0.15cm, clip, width=6cm]{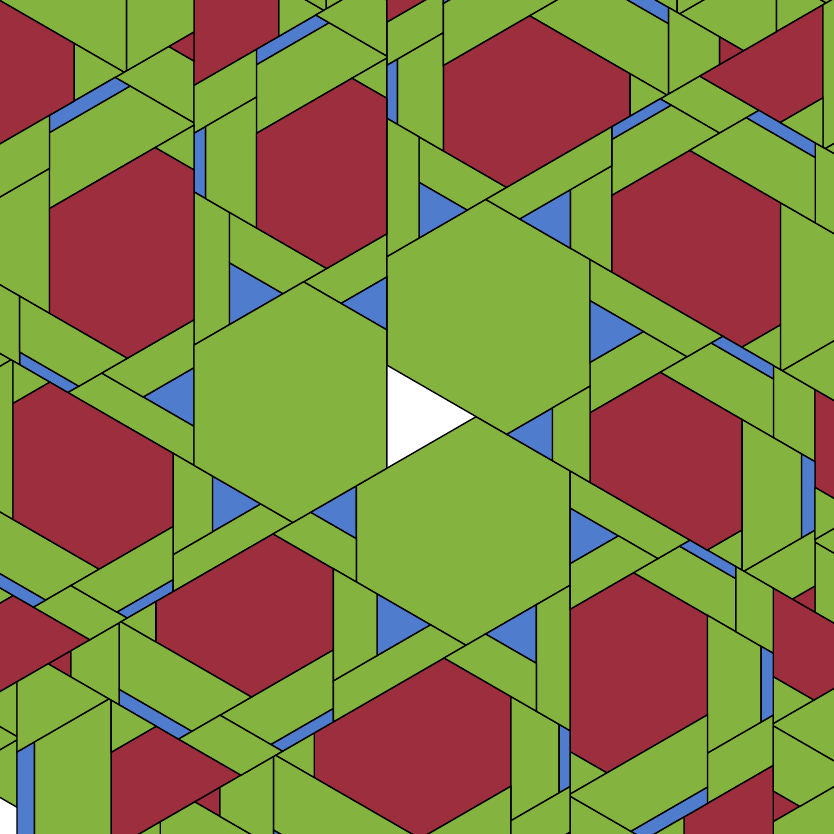}
\end{center}
\caption{Dissipative outer billiard about the equilateral triangle for $ \la = 0.95 $. The billiard map has three periodic orbits $\Gamma_{1,1}$, $\Gamma_{1,2}$ and $\Gamma_{2,1}$ of periods $3$, $9$ and $12$, respectively. The basin of attraction of each periodic orbit is the union of all the regions of the same color.}
\label{fi:basintriangle}
\end{figure}


\begin{proof}
Let $ 0<\la<1 $. Since $ T $ and $ F $ (see Subsection~\ref{su:sk}) are conjugated, we prove the corollary with $ T $ replaced by $ F $.

By Theorem~\ref{th:finaltrianglephi} and Lemma~\ref{le:periodic}, it follows that $$ O_{f}(z_{1}),\ldots,O_{f}(z_{m_1}),O_{f}(u_{1}),\ldots,O_{f}(u_{m_2}) $$ are the only periodic orbits of $ f $. Since $ z_{i} $ is a fixed point of $ \phi_{i} $, it follows from the definition of $ \phi_{i} $ that the period of $ O_{f}(z_{i}) $ is equal to $ 2i-1 $, and that $ f^{2i-1}(z_{i}) = (f_E\circ f_C)^{i-1}\circ f_C(z_i) $. An easy computation then shows that $ \sigma^{2i-1}(z_{i}) = 3i-2 \mod 3 $. Since $ 3i-2 $ and $ 3 $ are coprime, Lemma~\ref{le:periodic} implies that each set $ \Gamma_{1,i} := \pi^{-1}(O_{f}(z_{i})) $ is a periodic orbit of $ F $ of period $ 3(2i-1) $. 

A similar argument shows that $ \Gamma_{2,i} := \pi^{-1}(O_{f}(u_{i})) $ is a periodic orbit of $ F $ of period $ 12i $. Since $ f $ is a factor of $ F $, all periodic orbits of $ F $ are contained in the union of the sets $\Gamma_{i,j}$, $i=1,2$ and $j=1,\ldots,m_i$.

Now, let $ z $ be a non-singular point of $ F $. Theorem~\ref{th:finaltrianglephi} implies that there exists $z_*\in\text{Per}(\psi)$ such that $ \dist(f^{n}(z),O_{f}(z_{*})) \to 0 $ as $ n \to \infty $. By previous considerations, $\pi^{-1}(O_{f}(z_{*}))=\Gamma_{i,j}$ for some $i=1,2$ and $j=1,\ldots,m_i$. Finally, note that $ f $ is a factor of $ F $ and that $ \dist(w,\pi^{-1}(D)) = \dist(\pi(w),D) $ for every $ w \in X $ and every set $ D \subset A $. Combining all the previous observations together, we conclude that 
\[
\lim_{n \to +\infty} \dist(F^{n}(z),\Gamma_{i,j}) = \lim_{n \to +\infty} \dist(f_{n}(\pi(z)),O_{f}(z_*)) = 0.
\]
\end{proof}

\section{Square billiard}   
\label{square}

We now study the dissipative outer billiard about the square. As for the triangular billiard, we consider the special case $ \la_{1} = \cdots = \la_{4} = \la \in (0,1) $. In our analysis, we follow closely the approach used for the dissipative outer billiard about the equilateral triangle. In particular, we will construct a first return map $\phi$ on a strip $B$ and obtain a detail description of the dynamics of the dissipative outer billiard map $T$. For the square, the geometry of the action of the map $\phi$ is simpler, because the singular set of the billiard map consists of horizontal and vertical lines. In this section, we will formulate a result without proof, every time that the proof can be easily recovered from the corresponding ones for the equilateral triangle.

For convenience, we assume that the corners of the square are given by $w_0=0 $, $w_1=-i$, $w_2=1-i$ and $w_3=1$. Let $A_i$ be the open conical region corresponding to $w_i$ (see Fig.~\ref{square_fig2}). 
\begin{figure}[t]
\begin{center}
\includegraphics[scale=.4]{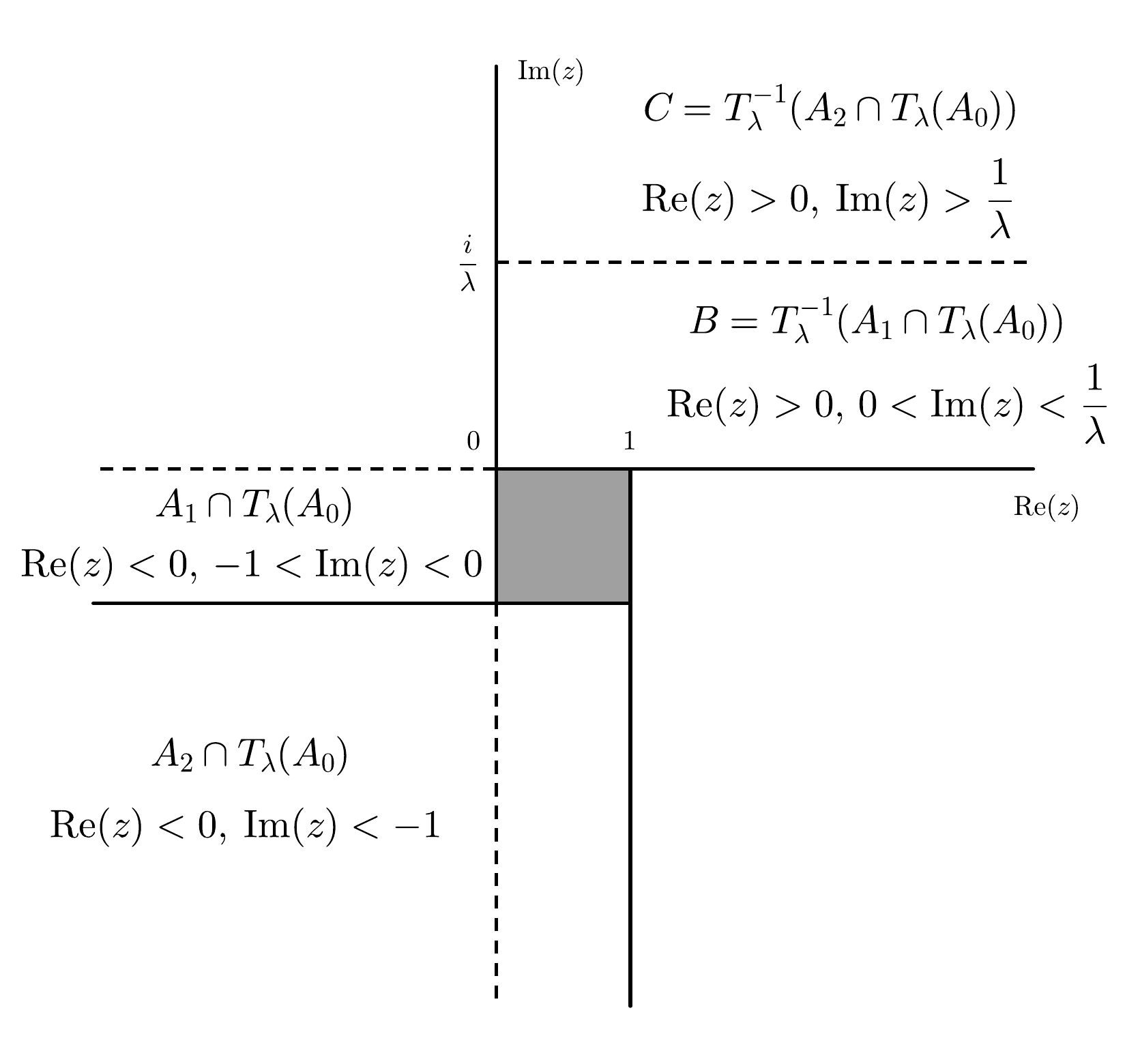}
\caption{Outer billiard map about the unit square and reduction to the first quadrant.}
\label{square_fig2}
\end{center}
\end{figure}
So $A_0=\{\Re(z)>0 \text{ and } \Im(z)>0\}$ is the positive quadrant of $ \Cc $. Let $T_{\la}$ be the outer billiard map on the square with $ \la \in (0,1) $. From now on, we will drop the label $\la$ from our notation. We have $T(A_0)\subset A_1\cup A_2$, and rotating clockwise $ A_1 \cap T(A_0) $ and $A_2 \cap T(A_0) $ about the center of the square by an angle of $90$ and $180$ degrees, respectively, we bring these regions back to $A_0$ (see Fig.~\ref{square_fig2}). 

We now compute explicitly the map $ f $ for the square. Let $ B=T^{-1}(A_{1} \cap T(A_{0}))$, and let $C=T^{-1}(A_{2} \cap T(A_{0}))$. These sets were denoted by $ D_{1} $ and $ D_{2} $ in Subsection~\ref{su:sk}. It is easy to check that $ B = \{\Re(z)>0, \, \Im(z)<1/\la\} $ and $ C = \{\Re(z)>0, \, \Im(z)>1/\la\} $ (see Fig.~\ref{square_fig2}).  


Next, define $f|_B=f_B $ and $ f|_C=f_C $. Note that $ f_{A} $ and $ f_{B} $ are precisely the maps $ f_{1} $ and $ f_{2} $ defined in Subsection~\ref{su:sk}. A simple computation shows that 
\begin{equation}
\label{for_iteration_eq}
f_B(z)=i\la z+1, \qquad f_C(z)=\la z+1-i.
\end{equation}

Let $ C_{1} = B $, and let $ C_{n+1} = f^{-1}_{C}(C_{n}) \subset C $ for $ n \ge 1 $. For $ n \ge 2 $, the set $ C_{n} $ consists of elements of $ C $ hitting $ B $ exactly after $ n $ iterations of $ f $. In the next lemma, we prove that every non-singular point of $ C $ visits $ B $ eventually.

%
%

\begin{lemma}
\label{le:b-global-section}	
$ C \setminus S_{\infty} \subset \bigcup_{n \ge 2} C_{n} $.
\end{lemma}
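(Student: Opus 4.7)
The plan is to exploit the fact that $f_C(z) = \la z + 1 - i$ is a genuine affine contraction of $\Cc$, whose unique fixed point
\[
p = \frac{1-i}{1-\la}
\]
has imaginary part $-1/(1-\la)$ strictly below $1/\la$; in particular, $p$ lies outside $C$. Heuristically, the iterates of $f_C$ drag every point of $C$ toward $p$ and so must eventually cross the horizontal line $\{\Im z = 1/\la\}$ that separates $C$ from $B$.

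Concretely, I would take $z \in C \setminus S_\infty$ and, assuming provisionally that the first $n-1$ iterates of $f$ all lie in $C$, use the explicit formula
\[
f^n(z) = f_C^n(z) = \la^n z + \frac{1-\la^n}{1-\la}(1-i)
\]
to observe that
\[
\Im f^n(z) = \la^n \Im z - \frac{1-\la^n}{1-\la} \longrightarrow -\frac{1}{1-\la} < \frac{1}{\la} \quad \text{as } n\to\infty.
\]
Hence the orbit cannot remain in $C$ indefinitely, and there is a smallest integer $n_0 \geq 1$ such that $f^{n_0}(z) \notin C$.

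The remaining step is to verify that $f^{n_0}(z)$ actually lands in $B$ rather than on the singular set or outside $A_0$. Because $f^{n_0-1}(z) \in C$, the formulas $\Re f^{n_0}(z) = \la \Re f^{n_0-1}(z) + 1 > 0$ and $\Im f^{n_0}(z) = \la \Im f^{n_0-1}(z) - 1 > 0$ place $f^{n_0}(z)$ in the first quadrant $A_0$. The only branch of the singular set $\SSS$ that survives the pullback into $A_0$ is the horizontal ray $\{z \in A_0 : \Im z = 1/\la\}$, and $z \notin S_\infty$ rules out $f^{n_0}(z) \in S$. Consequently $f^{n_0}(z) \in A_0 \setminus (C \cup S) = B$, and unwinding the recursive definition $C_{n+1} = f^{-1}(C_n) \cap C$ places $z \in C_{n_0+1} \subset \bigcup_{n \geq 2} C_n$.

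The only bookkeeping subtlety is that the argument needs the orbit to remain non-singular for the first $n_0$ steps, but this is precisely what the hypothesis $z \in C \setminus S_\infty$ guarantees. Aside from this, the proof is a routine geometric-series computation, entirely parallel to the reasoning used in Lemma~\ref{lem:BCn} for the equilateral triangle.
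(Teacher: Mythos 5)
Your proof is correct and rests on the same key observation as the paper's: the iterates of $f_C(z)=\la z+1-i$ converge to the fixed point $(1-i)/(1-\la)$, whose imaginary part is negative, so no orbit can remain in $C=\{\Re(z)>0,\ \Im(z)>1/\la\}$ forever. The paper states this as a two-line contradiction argument, while you additionally verify that the exit point lands in $B$ and identify the index $n_0+1\ge 2$; these are harmless elaborations of the same proof.
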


\begin{proof}
We argue by contradiction. Suppose that there exists $ z \in C $ such that $ z_{k}:=f^{k}_{C}(z) \in C $ for all $ k \ge 1 $. Thus $ \liminf_{k \to +\infty} \Im(z_{k}) \ge 1/\lambda $. However, by~\eqref{for_iteration_eq}, we have $ z_{k} \to (1-i)/(1-\lambda) $ as $k\to\infty$, yielding a contradiction.	
\end{proof}

Denote by $ B_{n} $ the subset of $ B $ formed by elements whose first return time to $ B $ is equal to $ n \ge 1 $. From the definition of $ C_{n} $, it follows that $ B_{n} = f^{-1}_{B}(C_{n}) \subset B $ for $ n \ge 1 $.
%
Let $ B' = \bigcup_{n} B_{n} $, and denote by $ \phi \colon B' \to B $ the first return map to $ B $ induced by $ f $. Also, let $ \phi_{n} = \phi|_{B_{n}} $.

%

\begin{figure}[t]
\includegraphics[scale=0.6]{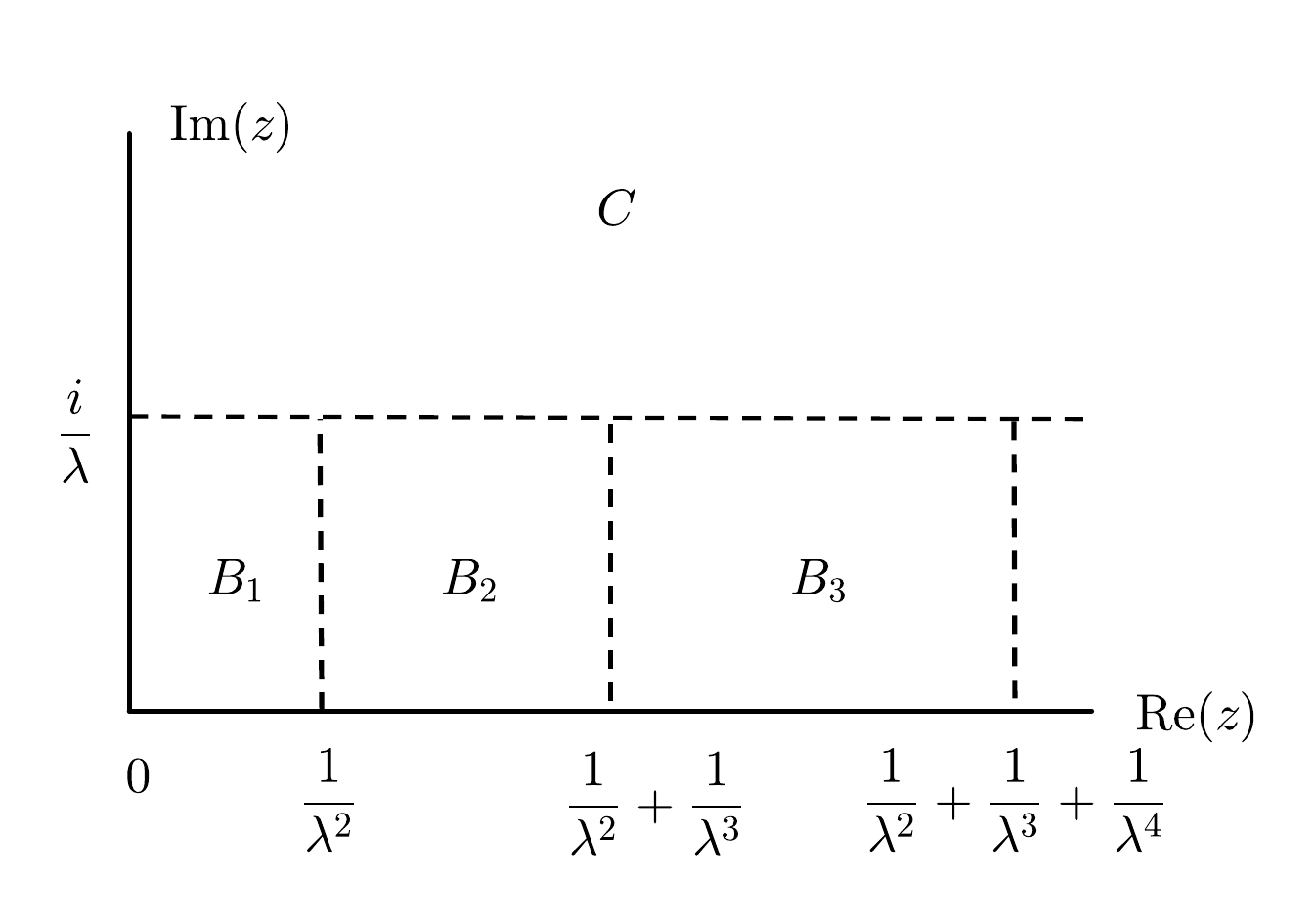}
\caption{Rectangles $ B_{n} $.}
\label{square_fig4}
\end{figure}

\begin{lemma}
\label{pr:monotonicity}	
We have
\begin{enumerate}
	\item $ \phi(B_{n}) = \phi_{n}(B_{n}) \subset \bigcup^{n}_{i=1} B_{i} $ for $ n \ge 1 $,
	\item $ \exists N = N(\lambda) $ such that $ \phi(B') \subset \bigcup^{N}_{i=1} B_{i} $.
\end{enumerate}
\end{lemma}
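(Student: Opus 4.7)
The plan is to make the geometry of $B_n$ and $\phi_n$ completely explicit, then verify the inclusions by comparing two parallel geometric series. Throughout, set
\[
s_n := \sum_{k=0}^{n-1}\la^k = \frac{1-\la^n}{1-\la}, \qquad \tau_n := \sum_{k=2}^{n+1}\la^{-k},
\]
with $s_0=\tau_0=0$, so that $s_n \nearrow 1/(1-\la)$ while $\tau_n \nearrow +\infty$. The decisive algebraic identity is $\tau_n = s_n/\la^{n+1}$, which immediately yields $s_n < \tau_n$; two useful corollaries are $\la^n\tau_{n-1}=s_{n-1}$ and $\la^n\tau_n = s_{n-1}+1/\la$.

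First I would identify each $B_n$ explicitly as a vertical strip inside $B$. From the recursion $C_{n+1}=f_C^{-1}(C_n)$ together with $f_C(z)=\la z+1-i$, a straightforward induction gives $C_n = \{z\colon \Re(z) > 0,\ \sigma_{n-1} < \Im(z) < \sigma_n\}$ for $n\ge 2$, with $\sigma_n=\sum_{k=1}^n\la^{-k}$. Pulling back by $f_B(z)=i\la z+1$, which sends $x+iy$ to $(1-\la y)+i\la x$, and using $\sigma_n/\la=\tau_n$, one obtains $B_n = \{z\in B \colon \tau_{n-1}<\Re(z)<\tau_n\}$ for every $n\ge 1$. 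Moreover, by induction $f_C^k(w)=\la^k w+(1-i)s_k$, so composing with $f_B$ and using $\la^{n-1}+s_{n-1}=s_n$, the first return map on $B_n$ equals
\[
\phi_n(z) = f_C^{n-1}(f_B(z)) = i\la^n z + s_n - i s_{n-1}.
\]

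For $z = x+iy \in B_n$ a direct substitution gives $\Re(\phi_n(z))=s_n-\la^n y \in (s_{n-1},s_n)$; using the two corollaries above, also $\Im(\phi_n(z))=\la^n x-s_{n-1}\in (0,1/\la)$. Hence $\phi_n(B_n)$ is the open rectangle $(s_{n-1},s_n)\times(0,1/\la)\subset B$. Claim (1) then reduces to $s_n \le \tau_n$ (on the separating lines $\Re = \tau_i$ with $i<n$ the orbit is on the singular set of $f$ anyway), which is immediate from $\tau_n=s_n/\la^{n+1}$ and $\la<1$. For claim (2), since $s_n<1/(1-\la)$ uniformly in $n$, one has $\Re(\phi(z))<1/(1-\la)$ for every $z\in B'$; as $\tau_N\to+\infty$, I take $N=N(\la)$ to be the least integer with $\tau_N\ge 1/(1-\la)$ (equivalently $\la^N\le 1/(1+\la)$), giving $\phi(B')\subset B_1\cup\cdots\cup B_N$.

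The argument is essentially dimensional: $\phi_n$ is a $\pi/2$-rotation composed with contraction by $\la^n$, and the width $1/\la^{n+1}$ of the vertical strip $B_n$ matches up perfectly so that its image has height exactly $1/\la$ (the full height of $B$), while its horizontal extent $(s_{n-1},s_n)$ stays inside the growing interval $(0,\tau_n)$. I anticipate no deep obstacle; the only real work is bookkeeping with the two parallel geometric series and exploiting the pivotal identity $\tau_n = s_n/\la^{n+1}$.
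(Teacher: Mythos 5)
Your computation is correct and supplies exactly the details the paper omits (its proof of this lemma merely points to the triangle analogue and a figure): the identification of $B_n$ as the strip $\tau_{n-1}<\Re(z)<\tau_n$, the formula $\phi_n(z)=f_C^{n-1}\circ f_B(z)=i\la^n z+s_n-is_{n-1}$ sending $B_n$ onto the rectangle $(s_{n-1},s_n)\times(0,1/\la)$, and the inclusions via $s_n<\tau_n=s_n/\la^{n+1}$ and $s_n<1/(1-\la)\le\tau_N$ all check out. Your parenthetical about points landing on the separating lines $\Re=\tau_i$ is the right reading of the (slightly imprecise) statement and is consistent with how the paper handles $S_\infty$ elsewhere.
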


\begin{proof}
Similar to the proof of Lemma~\ref{lem:Bnmonotone}. See also Fig.~\ref{square_fig4}.
\end{proof}
%

Let $ \psi_{n} $ be the affine extension of $ \phi_{n} $ to the whole $ \Cc $. Since $ \psi_{n} $ is a strict contraction, it has a unique fixed point $ z_{n} $. Note that $ z_{n} $ is a fixed point of $ \phi_{n} $ (and so of $ \phi $) if and only if $ z_{n} \in B_{n} $. It is also clear that 
if $ z_{n} $ is a fixed point of $ \phi_{n} $, then it is also a periodic point of $ f $ and $ T $.

The following proposition is an immediate consequence of Lemma~\ref{pr:monotonicity}.

\begin{proposition}
\label{pr:periodic}
Let $ N(\lambda) $ be the constant in Proposition~\ref{pr:monotonicity}. For every $ z \in B \setminus S_{\infty} $, there exists an integer $ 0< n \le N(\lambda) $ such that $ \lim_{k \to +\infty} \phi^{k}(z) = z_{n} \in \overline{B_{n}} $.
\end{proposition}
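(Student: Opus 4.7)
The plan is to exploit the monotonicity packaged in Lemma~\ref{pr:monotonicity} to show that, for any non-singular $z\in B$, the sequence of indices $n_k$ defined by $\phi^k(z)\in B_{n_k}$ is eventually constant, and then to invoke the contractivity of the affine extension $\psi_n$ to identify the limit as its unique fixed point.

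More concretely, let $z\in B\setminus S_\infty$. Since $z\in B$ and $B=\bigsqcup_{j\ge 1}B_j$ (up to the singular set), there is a first index $n_1\ge 1$ with $z\in B_{n_1}$, and inductively $\phi^k(z)\in B_{n_{k+1}}$ for some $n_{k+1}\ge 1$ for all $k$, the iterates being well defined because $z$ avoids $S_\infty$. Item (2) of Lemma~\ref{pr:monotonicity} gives $n_k\le N(\lambda)$ for all $k\ge 2$, so the sequence is bounded. Item (1) of the same lemma says $\phi(B_m)\subset B_1\cup\cdots\cup B_m$, hence $n_{k+1}\le n_k$ for every $k\ge 2$. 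A bounded non-increasing sequence of positive integers stabilizes: there exist $K\ge 1$ and $n^*\in\{1,\ldots,N(\lambda)\}$ such that $n_k=n^*$ for all $k\ge K$.

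Set $w=\phi^K(z)\in B_{n^*}$. For every $j\ge 0$, $\phi^j(w)\in B_{n^*}$, so $\phi^j(w)=\phi_{n^*}^j(w)=\psi_{n^*}^j(w)$. Because $\psi_{n^*}$ is an affine strict contraction of $\Cc$ with unique fixed point $z_{n^*}$, we obtain
\[
\lim_{j\to\infty}\phi^{K+j}(z)=\lim_{j\to\infty}\psi_{n^*}^j(w)=z_{n^*}.
\]
Moreover $\phi^{K+j}(z)\in B_{n^*}$ for all $j\ge 0$, whence the limit $z_{n^*}$ lies in $\overline{B_{n^*}}$. Together with $n^*\le N(\lambda)$, this is exactly the claim.

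The main (modest) obstacle is the transition from the first index $n_1$ (which could a priori be large) to the stabilization regime: the bound $n_k\le N(\lambda)$ is only guaranteed by item (2) after one application of $\phi$, so one must be careful to start the monotonicity argument at $k\ge 2$ rather than at $k=1$. Once this is observed, the argument collapses to a one-line consequence of the monotone structure of $\phi$ combined with the contracting character of each branch $\phi_n$.
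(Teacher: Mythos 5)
Your argument is correct and is exactly the one the paper has in mind: the paper states this proposition as an immediate consequence of Lemma~\ref{pr:monotonicity}, and your stabilization-of-indices argument (the index sequence is non-increasing by item (1), bounded by $N(\lambda)$ after one step by item (2), hence eventually constant, after which the orbit is driven by the single contracting branch $\psi_{n^*}$ to its fixed point) is precisely the intended filling-in of that claim. The only implicit ingredient worth flagging is that $B\setminus S_\infty\subset B'$, which follows from Lemma~\ref{le:b-global-section}, so the index $n_k$ is well defined at every step.
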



Using the terminology introduced in Section~\ref{intro}, we can rephrase the conclusion of the previous proposition by saying that $ \phi $ is asymptotically periodic. Let $ \omega(\phi) $ denote the $ \omega $-limit set of $ \phi $. The next corollary follows directly from Proposition~\ref{pr:periodic}. 

\begin{corollary}
\label{co:attractor}
We have $ \omega(\phi) \subset \{z_{1},\ldots,z_{N(\lambda)}\} $ for every $ \la\in(0,1) $.
\end{corollary}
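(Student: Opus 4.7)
The plan is to deduce the corollary immediately from Proposition~\ref{pr:periodic}. Recall that $\omega(\phi)$ denotes the union of the $\omega$-limit sets $\omega(z)$ taken over all non-singular points $z \in B \setminus S_{\infty}$, in accordance with the general framework of Section~\ref{general}.

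First I would fix an arbitrary non-singular point $z \in B \setminus S_{\infty}$. Proposition~\ref{pr:periodic} produces an integer $0 < n \le N(\lambda)$ such that $\phi^k(z) \to z_n$ as $k \to \infty$. Since a convergent sequence has a singleton $\omega$-limit, we obtain $\omega(z) = \{z_n\} \subset \{z_1, \ldots, z_{N(\lambda)}\}$. Taking the union over all non-singular $z \in B \setminus S_{\infty}$ then yields
\[
\omega(\phi) \;=\; \bigcup_{z \in B \setminus S_{\infty}} \omega(z) \;\subset\; \{z_1, \ldots, z_{N(\lambda)}\},
\]
which is the claimed inclusion.

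There is no real obstacle here: the corollary is essentially just a reformulation of Proposition~\ref{pr:periodic} in the language of $\omega$-limit sets. All the substantive analytical content---namely, the forward-invariance of $\bigcup_{i=1}^n B_i$ under iterates of $\phi_n$, the finiteness of the bound $N(\lambda)$ from Lemma~\ref{pr:monotonicity}, and the contractivity of each $\psi_n$ that forces orbits staying eventually in a single $B_n$ to converge to $z_n$---has already been absorbed into that proposition. Consequently, the proof amounts to little more than unpacking the definition of $\omega(\phi)$.
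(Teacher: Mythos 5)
Your proof is correct and follows exactly the route the paper intends: the paper simply states that the corollary ``follows directly from Proposition~\ref{pr:periodic}'', and your argument is the straightforward unpacking of that claim via the definition of $\omega(\phi)$ as the union of the singleton $\omega$-limit sets of non-singular points.
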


A bifurcation analysis of the fixed points $ \{z_{n}\}_{n \ge 1} $ can be performed similarly to the corresponding analysis for the equilateral triangle. From this analysis, we can derive a precise description of $ \omega(\phi) $ and $ \omega(T) $. The following result can be proved by adapting the arguments used in Section~\ref{triangle}, in particular the proofs of Proposition~\ref{lem:fxpts} and Theorem~\ref{th:finaltrianglephi}. 

Let $\{\la_i\}_{i\geq1}$ be the sequence in Lemma~\ref{le:solutionQ}.
 
\begin{theorem}
\label{th:final}
Let $ m(\la)>0 $ be the integer defined by $ \la_{m(\la)} < \la \le \la_{m(\la)+1} $. Then $\omega(\phi)=\{z_1,\ldots,z_{m(\la)}\} $ for every $\la\in(0,1)$. 
\end{theorem}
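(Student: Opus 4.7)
The plan is to follow the bifurcation analysis carried out for the triangular billiard in Proposition~\ref{lem:fxpts} and Theorem~\ref{th:finaltrianglephi}. By Corollary~\ref{co:attractor}, $\omega(\phi)\subset\{z_1,\ldots,z_{N(\la)}\}$, and $z_n$ is a genuine fixed point of $\phi$ precisely when $z_n\in B_n$, so the task is to determine the parameter range in which this occurs and then verify that each such $z_n$ is indeed an $\omega$-limit. Since a point of $B_n$ returns to $B$ by applying $f_B$ once followed by $f_C$ exactly $n-1$ times, iterating \eqref{for_iteration_eq} gives
\[
\psi_n(z)=i\la^n z+\frac{1-\la^n}{1-\la}-i\,\frac{1-\la^{n-1}}{1-\la},
\]
and solving $\psi_n(z)=z$ yields an explicit rational formula for $z_n$.

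Iterating the definition of $B_n$ shows that $B_n=\{x+iy:t_{n-1}<x<t_n,\;0<y<1/\la\}$ with $t_n=\sum_{j=2}^{n+1}\la^{-j}$ and $t_0=0$. Rewriting $z_n\in B_n$ as polynomial inequalities and clearing denominators, the four resulting conditions collapse to the pair
\[
p_n(\la)<0,\qquad q_n(\la)>0,
\]
where $p_n(\la)=1-\la^{n-1}-\la^n+\la^{2n}$ is the polynomial of Lemma~\ref{le:solutionQ} and $q_n(\la)=1-\la^n-\la^{n+1}+\la^{2n}$. The key algebraic identity is
\[
q_n(\la)=(1-\la^n)^2+\la^n(1-\la),
\]
which shows $q_n>0$ throughout $(0,1)$; hence the upper bounds are automatic and the effective condition is $p_n(\la)<0$, equivalently $\la\in(\la_n,1)$, equivalently $n\le m(\la)$. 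A direct inspection of which inequalities become tight at $p_n=0$ shows that $\Im z_n=0$ and $\Re z_n=t_{n-1}$ simultaneously, placing $z_n$ at the bottom-left corner of $\overline{B_n}$ when $\la=\la_n$.

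Combining this with Corollary~\ref{co:attractor} yields $\omega(\phi)\subset\{z_1,\ldots,z_{m(\la)}\}$. For the reverse inclusion, when $\la>\la_n$ strictly, $z_n$ lies in the open interior of $B_n$, so a sufficiently small ball $U$ about $z_n$ is contained in $B_n$ and sent into itself by the contraction $\psi_n$; since $S_\infty$ is a countable union of half-lines, $U$ contains non-singular points whose $\phi$-orbits converge to $z_n$, and hence $z_n\in\omega(\phi)$. Finally, at the boundary value $\la=\la_{m(\la)+1}$ the point $z_{m(\la)+1}$ sits at a corner of $\overline{B_{m(\la)+1}}$, and writing $\psi_{m(\la)+1}(z)-z_{m(\la)+1}=i\la^{m(\la)+1}(z-z_{m(\la)+1})$ as a rotation by $\pi/2$ composed with a contraction shows that every point of $B_{m(\la)+1}$ near $z_{m(\la)+1}$ is sent into $\{\Re z<t_{m(\la)}\}$, which meets $B$ only inside $\bigcup_{k\le m(\la)}B_k$; the monotonicity in Lemma~\ref{pr:monotonicity}(1) then prevents any return to a neighbourhood of $z_{m(\la)+1}$, so $z_{m(\la)+1}\notin\omega(\phi)$.

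The main obstacle is the algebraic reduction: verifying that the four natural inequalities defining $z_n\in B_n$ compress to the single sharp threshold $p_n(\la)<0$, and in particular spotting the factorisation of $q_n$ that makes the upper bounds automatic. Once this reduction is secured, the remaining arguments are essentially parallel to the triangle case and rely only on Corollary~\ref{co:attractor}, Lemma~\ref{pr:monotonicity}, and the properties of $\{\la_n\}$ established in Lemma~\ref{le:solutionQ}.
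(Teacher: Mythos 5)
Your proof is correct and follows exactly the route the paper intends: it is the adaptation of Proposition~\ref{lem:fxpts} and Theorem~\ref{th:finaltrianglephi} that the paper explicitly leaves to the reader, and your computations check out --- the formula for $\psi_n$, the explicit fixed point $z_n=\bigl((1-\la^{2n-1})-i\,p_n(\la)\bigr)/\bigl((1-\la)(1+\la^{2n})\bigr)$, the collapse of the four membership inequalities for $z_n\in B_n$ to the single condition $p_n(\la)<0$ via the identity $q_n(\la)=(1-\la^n)^2+\la^n(1-\la)$, and the rotation-by-$\pi/2$ argument excluding $z_{m(\la)+1}$ at the bifurcation value $\la=\la_{m(\la)+1}$. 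Nothing further is needed.
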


%
The following is a corollary of Theorem~\ref{th:final} and Lemma~\ref{le:b-global-section}. Its proof is similar to the one of Corollary~\ref{co:finaltriangle}.

\begin{corollary}
\label{co:final}
For every $ \la \in (0,1) $, the map $ T $ has exactly $ m=m(\la) $ distinct periodic orbits $ \Gamma_{1},\ldots,\Gamma_{m} $, and the period of $ \Gamma_{i} $ is equal to $ 4i $. Moreover, for every non-singular $ z \in X $, there exists $ 1 \le i \le m $ such that $ \dist(T^{n}(z),\Gamma_{i}) \to 0 $ as $ n \to +\infty $. (see Fig.~\ref{fi:basin}).
\end{corollary}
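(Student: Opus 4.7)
The plan is to follow the proof of Corollary~\ref{co:finaltriangle} verbatim, with the one genuinely new ingredient being the arithmetic of the winding number for the square. Since $ T $ and the skew product $ F $ on $ A_{0} \times \Zz_{4} $ are conjugated, I would prove everything for $ F $. By Theorem~\ref{th:final} together with Proposition~\ref{pr:periodic}, the periodic orbits of the factor $ f $ are exactly $ O_{f}(z_{1}),\ldots,O_{f}(z_{m}) $ with $ m = m(\la) $. Since $ z_{n} \in B_{n} $ is a fixed point of $ \phi_{n} $ and $ B_{n} $ is the set of points of $ B $ whose first return time to $ B $ under $ f $ equals $ n $, the $ f $-period of $ z_{n} $ is exactly $ n $, and the itinerary reads $ (B,C,C,\ldots,C) $ with one symbol $ B $ followed by $ n-1 $ symbols $ C $.

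With $ B = D_{1} $ and $ C = D_{2} $, the definition $ \sigma(z)=i $ for $ z \in D_{i} $ gives
\[
\sigma^{n}(z_{n}) = 1 + 2(n-1) = 2n - 1.
\]
Since $ 2n-1 $ is odd, $ \gcd(4,2n-1)=1 $, and Lemma~\ref{le:periodic} with $ k=4 $ implies that $ \Gamma_{n} := \pi^{-1}(O_{f}(z_{n})) $ is a \emph{single} periodic orbit of $ F $, of period $ n \cdot 4/\gcd(4,2n-1) = 4n $. Because $ f $ is a factor of $ F $ via $ \pi $, every periodic orbit of $ F $ projects to a periodic orbit of $ f $ and so is contained in some $ \Gamma_{i} $; thus $ \Gamma_{1},\ldots,\Gamma_{m} $ exhaust the periodic orbits of $ F $, with periods $ 4,8,\ldots,4m $ respectively.

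For the attracting property, let $ z \in X $ be non-singular and set $ w = \pi(z) \in A_{0} \setminus S_{\infty} $. If $ w \in B $, apply Proposition~\ref{pr:periodic}; if $ w \in C $, then Lemma~\ref{le:b-global-section} places $ w \in C_{n} $ for some $ n $, so some forward $ f $-iterate of $ w $ lies in $ B $, and again Proposition~\ref{pr:periodic} applies. Either way there exists $ 1 \le i \le m $ with $ \dist(f^{n}(w),O_{f}(z_{i})) \to 0 $. Using the identity $ \dist(\cdot,\pi^{-1}(D)) = \dist(\pi(\cdot),D) $ and the factor relation $ \pi \circ F = f \circ \pi $, one then concludes $ \dist(F^{n}(z),\Gamma_{i}) \to 0 $, and transporting back through the conjugacy gives the corresponding statement for $ T $. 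The only non-routine step is the $ \gcd $ computation above; once $ \sigma^{n}(z_{n})=2n-1 $ is odd, coprimality with $ 4 $ is immediate and forces the clean period $ 4n $, which is the main obstacle to identify. Everything else is a direct transcription of the triangular argument.
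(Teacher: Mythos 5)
Your proposal is correct and takes essentially the same route as the paper, which does not write out this proof but simply declares it ``similar to the one of Corollary~\ref{co:finaltriangle}''; your reconstruction, with $B=D_1$, $C=D_2$, itinerary $B,C,\dots,C$, hence $\sigma^{n}(z_n)=1+2(n-1)=2n-1$ odd and $\gcd(4,2n-1)=1$ forcing a single $F$-orbit of period $4n$ via Lemma~\ref{le:periodic}, is exactly the intended adaptation. The remaining steps (exhaustion of periodic orbits via the factor map and the convergence statement via Lemma~\ref{le:b-global-section}, Proposition~\ref{pr:periodic} and the identity $\dist(\cdot,\pi^{-1}(D))=\dist(\pi(\cdot),D)$) match the triangular argument verbatim.
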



\begin{remark}
It is worth observing that the bifurcation value $ \la_{2i-1} $ is the same for the periodic orbits of period $4(2i-1)$ of the dissipative outer billiard about the square and the periodic orbits of period $3(2i-1)$ of the dissipative outer billiard about the equilateral triangle.
\end{remark}

%
%
%

\begin{figure}
\begin{center}
\includegraphics[trim=6cm 6cm 6cm 6cm, clip, width=6cm]{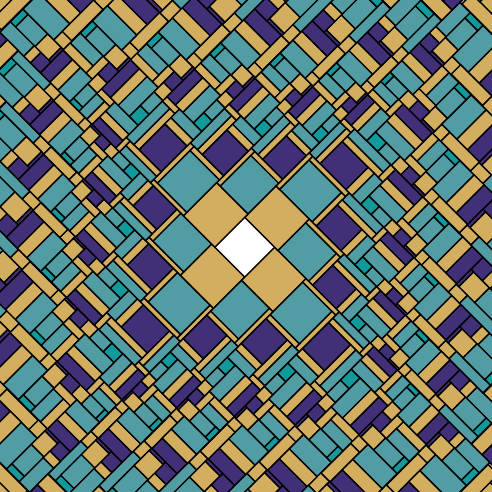}
\end{center}
\caption{Dissipative outer square billiard for $ \la = 0.95 $. The billiard map has three periodic orbits of period $ 4,8,12 $. The basin of attraction of each orbits is the union of all the regions of the same color.}
\label{fi:basin}
\end{figure}

\section{Persistency}
\label{persistency}

In this section, we prove that the dissipative square billiard remains asymptotically periodic under small perturbations of its vertices and the contraction rates. The precise statement is given in Theorem~\ref{th:persistent}.

Let $ \PP_{k} $ be the collection of all pairs $ (P,\vla) $ consisting of a convex $ k $-gon $ P $ with no more than two vertices lying on the same side and one vertex at the origin of $ \Cc $. Let $ \vla \in (0,1)^{k} $. We can naturally identify a $ k $-gon $ P $ with the vector $ \vw \in \R^{2k-2} $ whose components are the vertices $ w_{1},\ldots,w_{k-1} $ of $ P $ chosen so that if $ \partial P $ is positively oriented, then $ w_{1} $ follows the vertex at the origin, and $ w_{i+1} $ follows $ w_{i} $ for $ i=1,\ldots,k-1 $. In virtue of this identification, $ \PP_{k} $ is an open subset of $ \Rr^{3k-2} $ and a locally compact metric space with the sup norm $ \|\cdot\| $ of $ \Rr^{3k-2} $.

Now, let $ P $ be the square, and let $ \Lambda = \{\la_{1},\la_{2},\ldots\} $ be the set of all bifurcation values for the dissipative square billiards (see Theorem~\ref{th:final}). In this section, we will write $ T_{P,\vla} $ instead of $ T_{\vla} $ to emphasize the dependence of the billiard map on the polygon $ P $.

\begin{theorem}
\label{th:persistent}
Let $ P $ be the square, and suppose that $ \la_{i} = \la \in (0,1) \setminus \Lambda $ for $ i = 1,\ldots,4 $. Then for every $ \epsilon>0 $, there exists $ \delta>0 $ such that if $ (P',\vla') \in \PP_{4} $ and $ \|(P',\vla')-(P,\vla)\|<\delta $, then 
\begin{enumerate}
\item $ T_{P',\vla'} $ is asymptotically periodic,
\item $ \operatorname{card}(\omega( T_{P',\vla'})) = \operatorname{card}(\omega(T_{P,\vla})) $,
\item the Hausdorff distance of $ \omega(T_{P',\vla'}) $ and $ \omega(T_{P,\vla}) $ is less than $ \epsilon $,
\item corresponding periodic orbits of $ T_{P',\vla'} $ and $ T_{P,\vla} $ have the same period. 
\end{enumerate}
\end{theorem}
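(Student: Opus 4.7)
The plan is to combine a direct continuation argument for each of the finitely many periodic orbits of $T_{P,\vla}$ with the Catsigeras--Budelli theorem cited in the introduction, which ensures robustness of asymptotic periodicity for piecewise contractions whose periodic orbits avoid the singular set. First, I would restrict attention to the forward invariant ball $K_{\vla}$ from Proposition~\ref{basic_prop}; since $\|\vla'\|$ stays uniformly bounded below $1$ for $(P',\vla')$ in a small neighborhood of $(P,\vla)$, the ball $K_{\vla'}$ is uniformly bounded, and a single compact set $K$ contains all the relevant dynamics for every nearby parameter.

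The core step is the persistence of individual periodic orbits. By Corollary~\ref{co:final}, the assumption $\la\in(0,1)\setminus\Lambda$ gives exactly $m=m(\la)$ attracting periodic orbits $\Gamma_1,\ldots,\Gamma_m$, with $\Gamma_i$ of period $4i$. For a periodic point $z\in\Gamma_i$ with itinerary $(A_{j_0},\ldots,A_{j_{4i-1}})$, the map $T^{4i}_{P,\vla}$ agrees on a neighborhood of $z$ with a fixed composition of the four affine branches in~\eqref{eq:Tvlambda}, and is therefore a strict contraction with factor $\la^{4i}<1$. The non-bifurcation condition $\la\notin\Lambda$ guarantees that $z$ (and all its iterates) sit in the interior of their respective cones, hence at positive distance from the singular set $\SSS$. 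Since the coefficients of this composition depend analytically on $(P',\vla')$, the implicit function theorem applied to the fixed-point equation $T^{4i}_{P',\vla'}(z')=z'$ (whose linearization has determinant $1-\la'^{4i}\neq 0$) produces a unique nearby fixed point $z'(P',\vla')$ depending continuously on the parameters. For $\delta$ small enough, the itinerary of $z'$ coincides with that of $z$, so $z'$ is a genuine periodic point of $T_{P',\vla'}$ of period $4i$, yielding perturbed orbits $\Gamma_1',\ldots,\Gamma_m'$ with the same periods. This directly gives item~(4), and combined with uniform continuity of $z'$ it gives the Hausdorff bound in item~(3).

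Items~(1) and~(2) require ruling out the appearance of extra attractors. Here I would invoke the Catsigeras--Budelli theorem~\cite{CB}: a piecewise affine contraction on a compact domain whose periodic orbits all lie at positive distance from the singular set is asymptotically periodic, and the number of attractors is locally constant on such a generic set of parameters. The perturbed maps $T_{P',\vla'}$, restricted to $K$, are piecewise affine contractions (with uniformly bounded contraction rates) on a partition whose boundaries move continuously with $(P',\vla')$. For the unperturbed parameter this hypothesis holds because $\la\notin\Lambda$ keeps every orbit point of every $\Gamma_i$ at positive distance from $\SSS$; by continuity, the same is true for $T_{P',\vla'}$ when $\delta$ is sufficiently small. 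Applying their theorem therefore gives asymptotic periodicity of $T_{P',\vla'}$ and forces the cardinality of $\omega(T_{P',\vla'})$ to equal $\operatorname{card}(\omega(T_{P,\vla}))$, yielding~(1) and~(2).

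The main obstacle is the verification of the Catsigeras--Budelli genericity hypothesis for the perturbed system and, equivalently, excluding the birth of new periodic orbits. The symmetry arguments used in Section~\ref{square} (the reduction to the first return map $\phi$ on $B$) are unavailable once $P'$ is no longer the square, so one cannot simply transport the bifurcation analysis. The persistence argument above produces the $m$ expected orbits, but to show that no extra orbits exist one must quantify that the total set of points whose $T_{P',\vla'}$-orbit comes within $\delta$ of the singular set shrinks uniformly with $\delta$; this is precisely what the Catsigeras--Budelli framework supplies. Assembling these ingredients, all four conclusions of the theorem follow.
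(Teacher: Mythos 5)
There is a genuine gap in the part of your argument that is supposed to deliver items (1) and (2). You state the Catsigeras--Budelli criterion as: ``a piecewise affine contraction whose periodic orbits all lie at positive distance from the singular set is asymptotically periodic,'' and you verify only that the orbits $\Gamma_1,\ldots,\Gamma_m$ stay away from $\SSS$. That is not the hypothesis of their result. The condition actually used in the paper (Proposition~\ref{pr:cb}, quoted from \cite{CB}) is that some iterate of the \emph{entire} invariant compact set misses the singular set: $\Phi^{n}(K)\cap\SSS_{\Phi}=\emptyset$ for some $n$. This is a statement about all orbits, not just the periodic ones, and it is strictly stronger than what you check: a piecewise contraction can have finitely many periodic orbits all at positive distance from $\SSS$ and \emph{still} fail to be asymptotically periodic --- the paper's concluding remarks point out quadrilaterals whose dissipative outer billiard has a Cantor attractor sitting on the singular set. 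So ``periodic orbits avoid $\SSS$'' cannot, by itself, rule out the birth of extra non-periodic attractors under perturbation, which is exactly what (1) and (2) require. Your closing sentence defers this to ``precisely what the Catsigeras--Budelli framework supplies,'' but the framework only supplies persistence \emph{after} the hypothesis $T^{n}(K)\cap\SSS=\emptyset$ has been verified for the unperturbed map; that verification is the real content of the paper's Proposition~\ref{pr:nointersection} and is absent from your proposal.

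Concretely, the missing step is this: one must show that for the square with $\la\notin\Lambda$ there is an $n$ with $T^{n}_{P,\vla}(K_{P,\vla})\cap\SSS_{T_{P,\vla}}=\emptyset$. The paper does this by extending $T_{P,\vla}$ to a multivalued map on $\SSS$, upgrading Corollary~\ref{co:final} to the statement that \emph{every} $z\in X$ (singular points included) has $\omega(z)$ contained in $\bigcup_i\Gamma_i$, and then using asymptotic stability of each $\Gamma_i$ together with compactness of $K_{P,\vla}$ to extract a uniform $n$. This uses the full global description of the dynamics from Section~\ref{square}, not merely the location of the periodic points. Your implicit-function-theorem continuation of the orbits $\Gamma_i$ is fine as far as it goes and does give (4) and the one-sided inclusion needed for (3), but without the uniform separation of $T^{n}(K)$ from $\SSS$ you have no control over the global attractor of $T_{P',\vla'}$, and (1)--(2) remain unproved.
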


\begin{proof}
The theorem follows from Propositions~\ref{pr:cb-billiards} and \ref{pr:nointersection}.
\end{proof}

The next corollary is an immediate consequence of the previous theorem.

\begin{corollary}
The dissipative outer billiard about any quadrilateral sufficiently close to the square with contraction coefficients sufficiently close to some real in $ (0,1) \setminus \Lambda $ is asymptotically periodic. 

\end{corollary}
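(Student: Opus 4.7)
The plan is to deduce the corollary as a direct consequence of Theorem~\ref{th:persistent}. Given any real $ \la \in (0,1) \setminus \Lambda $, set $ \vla = (\la,\la,\la,\la) $ and let $ P $ denote the unit square normalized so that one vertex lies at the origin, so that $ (P,\vla) \in \PP_{4} $. Applying Theorem~\ref{th:persistent} with some fixed $ \epsilon>0 $ (say $ \epsilon = 1 $) yields a $ \delta>0 $ such that every pair $ (P',\vla') \in \PP_{4} $ with $ \|(P',\vla')-(P,\vla)\| < \delta $ satisfies conclusion~(1) of the theorem, namely that $ T_{P',\vla'} $ is asymptotically periodic. This is precisely the assertion of the corollary, once the parametrization is matched to the wording.

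The only point worth tracking is the passage from ``any quadrilateral sufficiently close to the square'' to the normalized description used to define $ \PP_{4} $. Given a quadrilateral $ Q $ close to the square, we translate it so that one of its vertices lies at the origin and order the remaining vertices in the positive orientation. For $ Q $ close enough to the square, this normalization can be performed continuously in $ Q $, so that the resulting polygon $ P' $ lies within $ \delta/2 $ of $ P $ in the metric of $ \PP_{4} $. Since asymptotic periodicity is invariant under affine conjugation --- in particular under translation --- verifying asymptotic periodicity for the normalized billiard $ T_{P',\vla'} $ is equivalent to verifying it for the original one. Taking $ \vla' $ also within $ \delta/2 $ of $ \vla $ then places $ (P',\vla') $ in the $ \delta $-neighborhood of $ (P,\vla) $, and Theorem~\ref{th:persistent} applies.

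There is no substantive obstacle here; the argument is essentially a bookkeeping check that the hypotheses of Theorem~\ref{th:persistent} are satisfied. The real work has already been done in proving that theorem (via Propositions~\ref{pr:cb-billiards} and~\ref{pr:nointersection}, i.e.\ via Bruin--Deane/Catsigeras--Budelli robustness together with the non-intersection of orbits with singularities at the unperturbed parameters, which is guaranteed by the assumption $ \la \notin \Lambda $). The corollary is stated merely to spell out the content of the theorem without reference to the technical normalization conventions of $ \PP_{4} $.
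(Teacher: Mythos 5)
Your proposal is correct and matches the paper, which simply states that the corollary is an immediate consequence of Theorem~\ref{th:persistent} and gives no further argument. The extra normalization remarks you include (translating a vertex to the origin and using affine invariance of asymptotic periodicity) are harmless bookkeeping that the paper leaves implicit.
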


We now recall the notions of piecewise locally contracting maps and persistency introduced in~\cite{CB}. We present these notions only for two-dimensional maps, which is the situation of interest here.

Let $ K $ be a compact subset of $ \Cc $ with the metric $ |\cdot| $ inherited from $ \Cc $. We call\footnote{These transformations are called piecewise continuous locally contracting maps~\cite{CB}.} $ \Phi = \{\Phi_{i}\} $ a {\em piecewise contraction on $ K $} if there exist a constant $ 0 < \rho < 1 $ and a finite compact cover $ C=\{C_{1},\ldots,C_{m}\} $ of $ K $ such that $ C_{i} \cap C_{j} \subset \partial _{i} C_{i} \cap \partial C_{j} $ for $ i \neq j $, and $ \Phi_i \colon C_{i} \to \Phi(C_{i}) \subset K $ is a bijective strict contraction with Lipschitz constant $ L_{\Phi_{i}} \le \rho $ for every $ i $. Let $ L_{\Phi} = \max_{i} L_{\Phi_{i}} $. The map $ \Phi $ is defined by $ \Phi(A) = \bigcup_{i} \Phi_{i}(A \cap C_{i}) $ for every $ A \subset K $. The \textit{singular set} of $ \Phi $ is denoted by $\SSS_{\Phi}=\bigcup_{i\neq j} C_i\cap C_j$. Note that $ \Phi $ is multivalued transformation on $ \SSS_{\Phi} $. We will write $ (\Phi,C) $ to emphasize the role of the cover $ C $ in the definition of $ \Phi $.  

The space of all piecewise contractions on $ K $ is endowed with a topology defined by the following base. Given a piecewise contraction $ (\Phi,C) $ on $ K $ and a number $ \epsilon>0 $, the element $ B(\Phi,\epsilon) $ of the base is given by all piecewise contractions $ (\Psi,D) $ on $ K $ such that 
\begin{enumerate}
\item the covers $ C $ and $ D $ have the same cardinality,
\item $ \sup_{z \in C_{i} \cap D_{i}} |\Phi(z) - \Psi(z)| < \epsilon $ for every $ i $, 
\item $ |L_{\Phi}-L_{\Psi}|<\epsilon $,
\item the Haussdorf distance of $ C_{i} $ and $ D_{i} $ is less than $ \epsilon $ for every $ i $.
\end{enumerate}

Suppose that $ \Phi $ is piecewise contraction that is asymptotically periodic. We say that the $ \omega $-limit set of $ \Phi $ is \emph{persistent} provided that $ \Phi $ is asymptotically periodic, and that for every $ \epsilon>0 $, there exists $ \delta>0 $ such that if $ \Psi \in B(\Phi,\delta) $, then 
\begin{enumerate}
\item $ \Psi $ is asymptotically periodic,
\item $ \operatorname{card}(\omega(\Psi)) = \operatorname{card}(\omega(\Phi)) $,
\item the Hausdorff distance of $ \omega(\Psi) $ and $ \omega(\Phi) $ is less than $ \epsilon $,
\item corresponding periodic orbits of $ \Psi $ and $ \Phi $ have the same period. 
\end{enumerate}
 
The following proposition, proved by Catsigeras and Budelli, gives a sufficient condition for the persistency of $\om(\Phi)$~\cite[Lemma~3.3 and Remark~3.4]{CB}. 

\begin{proposition}
\label{pr:cb}
If $\Phi^n(K)\cap\SSS_{\Phi}=\emptyset$ for some integer $n>0$, then $ \Phi $ is asymptotically periodic, and $\om(\Phi)$ is persistent. 
\end{proposition}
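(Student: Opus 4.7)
The strategy is to use the hypothesis to trap the dynamics in a forward-invariant compact set on which $\Phi$ has no discontinuities, reducing the analysis to a genuine contractive system. Set $K_n := \Phi^n(K)$ and note that, since each $\Phi_i$ maps $C_i$ into $K$, the sequence $\{K_k\}$ is nested and decreasing. Compactness together with the hypothesis yields a buffer $d := \dist(K_n,\SSS_\Phi) > 0$, so every connected component of each $K_k$ with $k\ge n$ lies entirely in the interior of a single $C_i$, and $\Phi$ restricts there to a single affine contraction of Lipschitz constant at most $\rho$. First I would argue that $K_n$ has only finitely many connected components, proceeding by induction: the cover is finite, each $C_i \cap K_k$ has finitely many components (as the affine image of such), and $\Phi$ sends connected subsets of each $C_i$ to connected sets, so the component count of $K_k$ is non-increasing for $k\ge n$ and stabilizes at some $N$.

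For any component $E$ of $K_n$, its iterates $\Phi^k(E)$ are contained in single cells $C_{i_k}$ and satisfy $\diam \Phi^k(E) \le \rho^k \diam E$. Hence $\Phi^k(E)$ shrinks to a single point $x_E$, whose $\Phi$-orbit has coding $(i_k)_{k\ge 0}$. Because the component count is bounded by $N$ and each component maps into a unique successor, the coding is eventually periodic, so $x_E$ belongs to (or is asymptotic to) a periodic orbit of $\Phi$. Taking the union over all components of $K_n$ and noting that every orbit in $K \setminus \SSS_\Phi$-escapee trajectories eventually enter $K_n$, one concludes that $\om(\Phi)$ is a finite union of periodic orbits and every non-singular orbit converges to one of them, giving asymptotic periodicity.

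For persistency, fix $\Psi \in B(\Phi,\delta)$ with $\delta$ small. By induction on $k\le n$, using the uniform Lipschitz bound $\rho$ and the $\delta$-closeness of the covers, $\Psi^n(K)$ is $O(\delta)$-close in Hausdorff distance to $\Phi^n(K)$, and $\SSS_\Psi$ is $O(\delta)$-close to $\SSS_\Phi$. Choosing $\delta<d/3$ forces $\Psi^n(K)\cap \SSS_\Psi = \emptyset$, so the argument above applies to $\Psi$ as well, giving its asymptotic periodicity. Each periodic orbit of $\Phi$ of period $p$ is hyperbolic attracting, since the return map is a strict contraction of ratio at most $\rho^p$; applying the contraction mapping principle to $\Psi^p$ on a small ball about each periodic point produces a unique nearby periodic orbit of $\Psi$ of the same period. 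Equality of cardinalities and Hausdorff-closeness of $\om(\Psi)$ to $\om(\Phi)$ then follow because the buffer of width $d$ prevents components from merging, splitting, or being created—preserving the coding and hence the correspondence of orbits. The main obstacle is making this last coding-stability step rigorous: one must verify carefully that the uniform distance $d>0$ separating orbits from the singular set is robust against simultaneous perturbations of $\Psi$ and of $\SSS_\Psi$, which in turn relies essentially on the contraction ratio $\rho<1$ providing uniform control over all iterates up to time $n$.
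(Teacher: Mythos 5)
Your overall strategy is the right one: use the hypothesis to get a buffer $d:=\dist(\Phi^n(K),\SSS_{\Phi})>0$, observe that from time $n$ on the dynamics never meets a discontinuity, and reduce to a finite contractive combinatorial system. (For reference, the paper does not prove this statement at all --- it is quoted from Catsigeras--Budelli \cite[Lemma~3.3, Remark~3.4]{CB} --- so the comparison is with their argument.) The genuine gap is the finiteness step, on which everything else rests. You propose to count connected components of $K_n=\Phi^n(K)$, but your induction is circular: it assumes that $C_i\cap K_k$ has finitely many components, which is precisely what needs proving, and it invokes affineness of the $\Phi_i$, which is not part of the definition (the $\Phi_i$ are merely bijective strict contractions and the $C_i$ merely compact). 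In this generality $K_n$ can have infinitely many connected components --- already $K_1=\bigcup_i\Phi_i(C_i)$ can meet a cell $C_j$ in infinitely many pieces if $\Phi_i(C_i)$ oscillates across $\partial C_j$ --- so connected components are the wrong invariant, and the subsequent claim that ``the component count is bounded by $N$'' is unsupported.

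The correct finite invariant is the number of itinerary cylinders. Since $K_n\cap C_i$ and $K_n\cap C_j$ are disjoint compact sets for $i\neq j$ (a common point would lie in $K_n\cap\SSS_{\Phi}=\emptyset$), one has $d':=\min_{i\neq j}\dist(K_n\cap C_i,K_n\cap C_j)>0$. Choose $k_0$ with $\rho^{k_0}\diam K<d'$. The depth-$k_0$ atoms of $K_n$ (points sharing the same itinerary for $k_0$ steps) number at most $m^{k_0}$, are pairwise $d'$-separated (two distinct atoms have some forward image lying in different cells, and $\Phi^l$ does not increase distances), while each atom has $\Phi^{k_0}$-image of diameter $<d'$, hence contained in exactly one atom. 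This yields a self-map $\tau$ of a finite set; on each $\tau$-cycle of length $p$ the map $\Phi^{pk_0}$ is a contraction of a compact atom into itself, producing the finitely many attracting periodic orbits, and every non-singular orbit enters $K_n$ after $n$ steps and then follows $\tau$. This also repairs the step you yourself flag as the ``main obstacle'' in the persistency part: the inequalities defining $d$, $d'$ and the map $\tau$ are finitely many strict inequalities, hence hold verbatim for every $\Psi\in B(\Phi,\delta)$ with $\delta$ small, so $\Psi$ has exactly the same atom combinatorics --- no extra periodic orbits can appear and the periods are forced to agree, which is what your contraction-mapping continuation argument alone does not give.
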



We now reformulate Proposition~\ref{pr:cb} in the context of billiards maps. Given $ (P,\vla) \in \PP_{k} $ for some $ k $, let $ K_{P,\vla} $ be the forward invariant compact set for the map $ T_{P,\vla} $ as in Proposition~\ref{basic_prop}, and let $ \SSS_{T_{P,\vla}} $ be the singular set of $ T_{P,\vla} $. 

\begin{proposition}
\label{pr:cb-billiards}
Let $ (P,\vla) \in \PP_{k} $, and suppose that $ T^{n}_{P,\vla}(K_{P,\vla}) \cap \SSS_{T_{P,\vla}} = \emptyset $ for some integer $n>0$. Then $ T_{P,\vla} $ is asymptotically periodic, and $ \om(T_{P,\vla}) $ is persistent with respect to the topology of $ \PP_{k} $. 
\end{proposition}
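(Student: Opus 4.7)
The plan is to interpret $T_{P,\vla}$ as a piecewise contraction in the sense of Catsigeras--Budelli, apply Proposition~\ref{pr:cb} to derive asymptotic periodicity and CB-persistency of $\om(T_{P,\vla})$, and then translate CB-persistency into $\PP_{k}$-persistency via the continuous dependence of the billiard data on $(P,\vla)$.

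First, I would view $T_{P,\vla}$ restricted to the compact set $K_{P,\vla} \setminus P^{\circ}$ (which is forward invariant by Proposition~\ref{basic_prop} together with the elementary fact that the image of each exterior cone $A_{i}$ under the affine piece $z \mapsto -\la_{i} z + (1+\la_{i}) w_{i}$ is disjoint from $P$) as a piecewise contraction $(\Phi, C)$ with cover $C = \{C_{i} := K_{P,\vla} \cap \overline{A_{i}}\}_{i=1}^{k}$, Lipschitz constants $\la_{i} < 1$, and singular set $\SSS_{\Phi} = \SSS_{T_{P,\vla}} \cap K_{P,\vla}$. The hypothesis $T^{n}_{P,\vla}(K_{P,\vla}) \cap \SSS_{T_{P,\vla}} = \emptyset$ then coincides with the hypothesis of Proposition~\ref{pr:cb}, yielding the asymptotic periodicity of $T_{P,\vla}$ together with the CB-persistency of $\om(T_{P,\vla})$.

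Next, to promote CB-persistency to $\PP_{k}$-persistency, I would fix a common compact set $K$ of the form $\overline{B_{R}(0)} \setminus W$, where $R$ is slightly larger than the radius of $K_{P,\vla}$ and $W$ is a suitable open neighborhood of $P$, with $R$ and $W$ chosen so that, for every $(P',\vla')$ in a sufficiently small $\PP_{k}$-neighborhood $U$ of $(P,\vla)$: (a) $P' \subset W$, so that $T_{P',\vla'}$ is defined on $K$ with cover $\{K \cap \overline{A'_{i}}\}_{i=1}^{k}$; (b) $T_{P',\vla'}(K) \subset K$, obtained from a uniform version of the estimate $\|T^{n}_{\vla'}(z)\| \le \|\vla'\|^{n}\|z\| + (1+\|\vla'\|)\|\vw'\|/(1-\|\vla'\|)$ from Proposition~\ref{basic_prop} combined with the fact that the images of the cones $A'_{i}$ avoid $P'$. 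The four CB-closeness conditions in the definition of $B(T_{P,\vla},\epsilon)$ then follow from the continuous dependence of the cones $A'_{i}$ on the vertices of $P'$ (Hausdorff closeness of the $C'_{i}$) and of the affine pieces on $(w'_{i},\la'_{i})$ (uniform closeness and Lipschitz-constant closeness), so that the map $(P',\vla') \mapsto T_{P',\vla'}$ sends $\PP_{k}$-neighborhoods of $(P,\vla)$ into CB-neighborhoods of $T_{P,\vla}$. The four conclusions of the proposition then transfer from the CB-persistency established in the first step.

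The main obstacle is the construction of the common compact set $K$: one must choose $W$ large enough to contain every nearby perturbed polygon $P'$, yet small enough that the image of $K$ under each perturbed billiard does not re-enter $W$. The contracting nature of $T_{P',\vla'}$ plays a double role here---it is what makes the dynamics stable, but it also brings images of points close to a vertex $w'_{i}$ into a small neighborhood of that vertex, so some care is needed to ensure that $T_{P',\vla'}(K)$ does not enter $W$.
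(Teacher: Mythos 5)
Your overall strategy --- realize all billiard maps near $(P,\vla)$ as piecewise contractions on a single compact set, apply Proposition~\ref{pr:cb} there, and transfer persistency through the continuity of $(P',\vla')\mapsto T_{P',\vla'}$ --- is the same as the paper's. The gap is in the construction of the common compact set, and it is precisely the obstacle you flag at the end but do not resolve: your requirement (b), that $T_{P',\vla'}(K)\subset K$ for $K=\overline{B_{R}(0)}\setminus W$ with $W$ an open neighbourhood of $P$, fails already for the unperturbed map, for \emph{every} such $W$ that deserves to be called a neighbourhood of $P$. Indeed, take the unit square with $w_{0}=0$ and $A_{0}$ the open first quadrant, and pick $\epsilon_{0}>0$ with $\{\dist(\cdot,P)<\epsilon_{0}\}\subset W$. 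Any $z=x+iy\in A_{0}$ with $0<x<\epsilon_{0}/\la$ and $0<y\le 1/\la$ satisfies $\dist(T(z),P)=\dist(-\la z,P)=\la x<\epsilon_{0}$, so $T(z)\in W$; forward invariance of $K$ would therefore force all such $z$ into $W$, i.e.\ $W$ must contain a full strip along the singular ray bounding $A_{0}$ up to height $1/\la$. The structural reason is that the image cone $T(A_{i})$ is the cone opposite to $A_{i}$ at $w_{i}$: it is disjoint from $P$ but abuts the edge $w_{i-1}w_{i}$, so points of $A_{i}$ near the singular ray --- at arbitrary distance from $P$ --- land arbitrarily close to that edge. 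Repeating the argument, $W$ would have to absorb neighbourhoods of segments of $\SSS_{n}$ for every $n$, so no excised neighbourhood of the polygon yields a forward invariant domain on which the cover $\{K\cap\overline{A'_{i}}\}$ makes the billiard a self-map.

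The fix used in the paper is to not remove the polygon at all: extend $T_{P',\vla'}$ to $P'$ by an auxiliary contraction (fix $u\in P'$ and contract $z-u$ by $\min_{i}\la'_{i}$), so that for all $(P',\vla')$ in a compact neighbourhood $O$ of $(P,\vla)$ the maps become piecewise contractions, with covers of common cardinality $k+1$, on one and the same closed ball $W$, which is forward invariant by the uniform estimate of Proposition~\ref{basic_prop} taken over $O$. Your continuity argument for $(P',\vla')\mapsto T_{P',\vla'}$ then goes through verbatim. One further step you must add (your first paragraph applies Proposition~\ref{pr:cb} on $K_{P,\vla}$, but persistency is only meaningful among piecewise contractions on the common domain): the hypothesis has to be transported to $W$. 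This is done by observing that $T^{m}_{P,\vla}(W)\subset K_{P,\vla}$ for some $m$ (Proposition~\ref{basic_prop} again), whence $T^{n+m}_{P,\vla}(W)\cap\SSS_{T_{P,\vla}}=\emptyset$, which is the hypothesis Proposition~\ref{pr:cb} actually needs on $W$. With these two repairs your argument coincides with the paper's proof.
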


\begin{proof}
Let $ O $ be a compact neighborhood of $ \PP_{k} $ containing $ (P,\vla) $. Define
\[ 
a = \sup_{(P',\vla) \in O} \|\vla\| \qquad \text{and} \qquad b = \sup_{(P',\vla) \in O} \|\vw(P')\|,
\] 
where $ \vw(P') $ denotes the vector formed the vertices of $ P' $ as explained at the beginning of this section. Since $ O $ is compact, we have $ 0<a<1 $ and $ 0<b<\infty $. It follows easily from Proposition~\ref{basic_prop} that there exists a compact set $ W \subset \Cc $ forward invariant for every $ T_{P',\vla'} $ with $ (P',\vla') \in O $. Thus $ T_{P',\vla'} $ is a piecewise contraction\footnote{The map $ T_{P',\vla'} $ is not defined on $ P $. This can be easily fixed, by choosing $ u \in P $, and defining $ T_{P',\vla'}(z) = \min_{i} \la_{i} (z-u) $ for $ z \in P $. We also need to extend the map $ T_{P',\vla'} $ on each domain of continuity up to its boundary.} on $ W $ in the sense of Catsigeras and Budelli for every $ (P',\vla') \in O $. Now, denote by $ \Pi $ the transformation associating a pair $ (P',\vla') \in O $ to the piecewise contraction $ T_{P,\vla} $ on $ W $. If $ O $ is endowed with the Euclidean metric of $ \Rr^{3k-2} $, then it follows that $ \Pi $ is continuous. 

From the definition of $ W $, we see immediately that $ K_{P,\vla} $ is contained in $ W $. Then, Proposition~\ref{basic_prop} implies that $ T^{m}_{P,\vla}(W) \subset K_{P,\vla} $. Using $ T^{n}_{P,\vla}(K_{P,\vla}) \cap \SSS_{T_{P,\vla}} = \emptyset $, we obtain $ T^{n+m}_{P,\vla}(W) \cap \SSS_{T_{P,\vla}} = \emptyset $. The wanted conclusion now follows from Proposition~\ref{pr:cb} and the continuity of $ \Pi $. 
\end{proof}
  
Next, we show that the hypothesis of Proposition~\ref{pr:cb-billiards} is satisfied for the square with $ \la_{1} = \cdots = \la_{4} = \la \in (0,1) \setminus \Lambda $.
  
\begin{proposition}
\label{pr:nointersection}
Let $ P $ be the square, and suppose that $ \la_{i} = \la \in (0,1) \setminus \Lambda $ for $ i=1,\ldots,4 $. Then there exists an integer $ n > 0 $ such that $ T_{P,\vla}^{n}(K_{P,\vla}) \cap \SSS_{T_{P,\vla}} = \emptyset $. 
\end{proposition}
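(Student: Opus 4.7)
The plan is to verify directly the hypothesis of Proposition~\ref{pr:cb-billiards} by combining the dynamical description of $ T_{P,\vla} $ obtained in Section~\ref{square} with the non-bifurcation assumption $ \la \notin \Lambda $. The key step is to show that $ T_{P,\vla}^{n}(K_{P,\vla}) $ is eventually trapped in an open neighborhood of $ \omega(T_{P,\vla}) $ that is disjoint from $ \mathcal{S}_{T_{P,\vla}} $.

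I first establish that $ \omega(T_{P,\vla}) $ lies at positive distance from $ \mathcal{S}_{T_{P,\vla}} $. By Theorem~\ref{th:final}, $ \omega(\phi) = \{z_{1},\ldots,z_{m(\la)}\} $, and adapting the bifurcation argument used in the proof of Theorem~\ref{th:finaltrianglephi} to the square shows that for $ \la \notin \Lambda $ each $ z_{i} $ with $ i \le m(\la) $ lies in the \emph{interior} of $ B_{i} $ rather than on its boundary. Via Lemma~\ref{le:periodic} and the conjugacy between $ T_{P,\vla} $ and $ F $ from Subsection~\ref{su:sk}, $ \omega(T_{P,\vla}) $ is then a finite union of periodic orbits lying inside the open continuity regions of $ T_{P,\vla} $, and hence at some positive distance $ 3\eta $ from $ \mathcal{S}_{T_{P,\vla}} $.

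Next, I establish uniform attraction. Lemma~\ref{pr:monotonicity} yields two facts for any $ z \in B \setminus S_{\infty} $: the index $ n_{k} $ with $ \phi^{k}(z) \in B_{n_{k}} $ is non-increasing for $ k \ge 1 $, and is bounded above by $ N(\la) $. Hence $ n_{k} $ stabilizes at some $ n_{\ast} \le N(\la) $ after at most $ N(\la) $ iterations, and from that moment on $ \phi $ acts on the orbit as the affine contraction $ \phi_{n_{\ast}} $ of Lipschitz constant $ \la^{n_{\ast}} \le \la $ with fixed point $ z_{n_{\ast}} $. Because $ K_{P,\vla} $ is bounded, this yields the uniform estimate
\[
|\phi^{N(\la) + j}(z) - z_{n_{\ast}}| \le \la^{j}\operatorname{diam}(K_{P,\vla})
\]
for every $ z \in K_{P,\vla} \cap B \setminus S_{\infty} $ and every $ j \ge 0 $. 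Choosing $ j_{0} $ so that the right-hand side is less than $ \eta $ gives a uniform $ \phi $-iterate mapping $ K_{P,\vla} \cap B $ into the $ \eta $-neighborhood of $ \omega(\phi) $. Each step of $ \phi $ corresponds to at most $ N(\la) $ steps of $ f $, and each step of $ f $ to one step of $ T_{P,\vla} $; moreover, by~\eqref{for_iteration_eq}, the unique fixed point of $ f_{C} $ is $ (1-i)/(1-\la) $, whose imaginary part is negative and therefore lies outside $ C $, so every non-singular orbit in $ K_{P,\vla} \cap C $ enters $ B $ in a uniformly bounded number of $ f $-steps. Combining these reductions produces a uniform integer $ n_{0} $ such that $ T_{P,\vla}^{n_{0}}(K_{P,\vla}) $ lies within $ \eta $ of $ \omega(T_{P,\vla}) $, and hence is disjoint from $ \mathcal{S}_{T_{P,\vla}} $.

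The main obstacle is upgrading the pointwise convergence $ \operatorname{dist}(T_{P,\vla}^{n}z, \omega(T_{P,\vla})) \to 0 $ supplied by Corollary~\ref{co:final} to genuine uniform convergence over the compact set $ K_{P,\vla} $. This is precisely where the hypothesis $ \la \notin \Lambda $ is indispensable: at a bifurcation parameter the fixed point $ z_{i} $ would sit on the boundary of $ B_{i} $, nearby orbits would oscillate between adjacent boxes, and the clean monotonicity-plus-contraction estimate above would break down.
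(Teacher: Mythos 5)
Your overall strategy (show that $\omega(T_{P,\vla})$ is at positive distance from $\SSS_{T_{P,\vla}}$ and that $K_{P,\vla}$ is attracted to it \emph{uniformly}) is sound, and it is a more quantitative route than the paper's, which instead extends $T_{P,\vla}$ to a multivalued map on $\overline{A_i}$, observes that the finitely many periodic orbits are each asymptotically stable so that every point of $K_{P,\vla}$ has a neighborhood whose forward images eventually avoid $\SSS_{T_{P,\vla}}$, and concludes by compactness of $K_{P,\vla}$. However, your key quantitative step has a genuine gap. From Lemma~\ref{pr:monotonicity} you correctly get that the index sequence $n_k$ (with $\phi^k(z)\in B_{n_k}$) is non-increasing and bounded by $N(\la)$, but this only bounds the \emph{number} of decreases by $N(\la)-1$; it does not bound the \emph{time} at which they occur. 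An orbit can sit in $B_5$ for a thousand returns and only then drop to $B_3$, so the assertion that $n_k$ stabilizes ``after at most $N(\la)$ iterations'' is false, and the displayed estimate $|\phi^{N(\la)+j}(z)-z_{n_*}|\le\la^j\operatorname{diam}(K_{P,\vla})$ is unjustified. The repair is exactly where the hypothesis $\la\notin\Lambda$ must be used again: for each $n$ the fixed point $z_n$ of $\psi_n$ lies at some positive distance $d_n$ from $\partial B_n$ (strictly inside $B_n$ if $n\le m(\la)$, strictly outside if $n>m(\la)$), so after a number of iterations of $\phi_n$ bounded in terms of $d_n$ and $\operatorname{diam}(K_{P,\vla})$ every orbit in $B_n$ is within $d_n$ of $z_n$ and has therefore either left $B_n$ or become trapped near $z_n$ forever; summing these uniform dwell times over the finitely many boxes gives the uniform stabilization time you need. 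Without this extra argument the ``main obstacle'' you name in your last paragraph --- upgrading pointwise to uniform convergence --- is not actually overcome.

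A secondary omission: the proposition concerns $T^n_{P,\vla}(K_{P,\vla})$ for the full closed ball $K_{P,\vla}$, which contains $\SSS_{T_{P,\vla}}$ and points of every $\SSS_n$, whereas all of your estimates are stated only for $z\in B\setminus S_\infty$. To make the statement meaningful on all of $K_{P,\vla}$ (and to feed it into Proposition~\ref{pr:cb-billiards}, where the piecewise contraction is multivalued on its singular set), one must extend $T_{P,\vla}$ to the closed cones $\overline{A_i}$ and check that the convergence to the attractor holds for the (finitely many branches of the) orbits of singular points as well; the paper does this explicitly by reformulating Corollary~\ref{co:final} for every $z\in X$.
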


\begin{proof}
First, we extend $ T_{P,\vla} $ from $ A_{i} $ to $ \overline{A_{i}} $. This is clearly possible, because $ T_{P,\vla} $ is linear on each $ A_{i} $. Then, we redefine $ T_{P,\vla}(E) = \bigcup^{k}_{i=1} T_{P,\vla}(E \cap \overline{A}_{i}) $ for every set $ E \subset \overline{X} $. Note that $ T_{P,\vla} $ is now a multivalued map on $ \SSS_{T_{P,\vla}} $ with $ T_{P,\vla}(z) $ consists of at most of as many points as the number of singular lines meeting at $ z $.  The $ \om $-limit set $ \omega(z) $ has to be understood now as the set of accumulations points of the sequence of sets $ \{T^{n}_{P,\vla}(z)\}_{n \ge 0} $. Of course, we still have $ T_{P,\vla}(K_{P,\vla}) \subset K_{P,\vla} $.

For the extended map $ T_{P,\vla} $, Corollary~\ref{co:final} can be reformulated as follows: let $ m(\la) $ and $ \Gamma_{1},\ldots,\Gamma_{m(\la)} $ be the positive integer and the periodic orbits, respectively, as in Corollary~\ref{co:final}. If $ \la \notin \Lambda $, then $ \om(z) \subset \bigcup^{m(\la)}_{i=1} \Gamma_{i} $ for every $ z \in X $. This statement can be easily deduced from the proof of the corollary, which is similar to that of Corollary~\ref{co:finaltriangle}. Note that the statement holds for every $ z \in X $, and not just non-singular $ z \in X $ as in Corollary~\ref{co:final}. If $ \la \in \Lambda $, then the statement fails, because there exists a point $ z \in \SSS_{P,\vla} $ such that $ z \in T^{n}_{P,\vla}(z) $ for some $ n>0 $, and so $ \om(z) \cap \SSS_{P,\vla} \neq \emptyset $. The statement could have been easily modified to cover the case $ \la \in \Lambda $ as well, but that is not required for the purpose of this proof. 

Since each periodic orbit $ \Gamma_{i} $ is asymptotically stable, the previous reformulation of Corollary~\ref{co:final} implies that for every $ z \in K_{\vla} $, there exists a neighborhood $ U \subset X $ of $ z $ and an integer $ n > 0 $ such that $ T^{i}(U) \cap \SSS_{P,\vla} = \emptyset $ for all $ i \ge n $. By a standard compact argument, we conclude that $ T^{\bar{n}}_{P,\vla}(K_{P,\vla}) \cap \SSS_{P,\vla} = \emptyset $ for some integer $ \bar{n}>0 $. This completes the proof.
\end{proof}

\begin{remark}
Following Jeong~\cite{J}, one can show that the dissipative outer billiard about the square, the equilateral triangle and the hexagon is asymptotically periodic whenever $ \|\vla\|<1 $. The argument is quite simple, and we will illustrate it in the rest of this section. 

Let $T_\vla$ be the dissipative outer billiard about the unit square. When $\la_{1}=\cdots=\la_{k}=1$, the billiard has an explicit integral of motion $I(z)$ defined as follows. Given $z\in A_i$, let $x_i=x_i(z)$ and $y_i=y_i(z)$ be the unique positive numbers such that $z=w_i+x_i(w_{i-1}-w_i)+y_i(w_i-w_{i+1})$ for $ i=0,\ldots,3 $, where $ w_{4}=w_{0} $. Define $h(x,y)=[x] + [y]$, where $[a]$ is the integer part of $ a \in \Rr $. A simple computation shows that $I(T_\vla(z))=I(z)$ for $z\in X \setminus \SSS $, i.e., $I(z):=h(x_i(z),y_i(z))$ is an integral of motion. However when $\|\vla\|<1$, we have $ I(T_{\vla}(z)) \leq I(z)$, and $I$ becomes a Lyapunov function for $T_\vla$. Now, since the sequence of the vertices visited by an orbit contained in a set $ \{I=\alpha\} $ is completely determined by $ \alpha \in \Nn $, it is not difficult to show that the map $ T_{\vla} $ is asymptotically periodic for every $ \vla \in (0,1)^{4} $. 

The proof that the dissipative outer billiards about the equilateral triangle and the regular hexagon are both asymptotically periodic for $ \|\vla\|<1 $ is analogous. The Lyapunov function for both billiards is defined as for the square, but with $ h = h_{-} $ for the equilateral triangle, and with $ h = h_{+} $ for the hexagon, where
\[
h_{\pm}(x,y):=\left[\frac{x}{2}\right] + \left[\frac{y}{2}\right]+\left[\frac{\left|x\pm y\right|+1}{2}\right].
\]

To complete this remark, we observe that while the previous argument proves the asymptotic periodicity for the dissipative outer billiards considered, it does not give a complete description of the bifurcations of the $\om$-limit set, for which the analysis described in Sections~\ref{triangle} and \ref{square} or an alternative one is still required.
\end{remark}

\section{Concluding remarks}   
\label{conclu}

According to Bruin and Deane~\cite{Bruin}, almost every piecewise contraction is asymptotically periodic. This result, however, does not apply to polygonal dissipative outer billiards. The reason is that different polygons generate different partitions of the domain of definition of the corresponding outer billiard map, whereas in Bruin and Deane's setting, the partition is fixed a priori for the entire family of piecewise contractions. 

Not every dissipative outer polygonal billiard is asymptotically periodic. There exist quadrilaterals whose dissipative outer billiard has a Cantor set lying on the singular set, which attracts nearby points~\cite{J}. 

Numerical experiments we performed on dissipative outer billiards about regular polygons suggest the following conjecture.

\begin{conjecture}\label{conj:regular}
For almost every $\lambda\in(0,1)$, the dissipative outer billiard map $T_\lambda$ of any regular polygon is asymptotically periodic.\end{conjecture}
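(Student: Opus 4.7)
The plan is to apply the Catsigeras--Budelli persistency criterion (Proposition~\ref{pr:cb-billiards}) to the one-parameter family $T_\lambda$ attached to a fixed regular $k$-gon: for a.e.\ $\lambda\in(0,1)$ it would suffice to exhibit an integer $n$ with $T_\lambda^n(K_\lambda)\cap\mathcal{S}=\emptyset$, where $\mathcal{S}$ is the fixed set of $k$ singular rays emanating from the vertices and $K_\lambda$ is the forward-invariant ball of Proposition~\ref{basic_prop}. Asymptotic periodicity then follows automatically. The reduction is global, so the proof splits into an algebraic analysis of the set of ``bad'' parameters and a dynamical analysis of what happens for such parameters.

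As a first step I would pass to the planar factor $f_\lambda$ defined in Subsection~\ref{su:sk}, which is a piecewise affine contraction with $[k/2]+1$ branches of ratio $\lambda$; by standard bounds for piecewise contractions (see~\cite{Bruin}), its attractor consists of a finite, uniformly bounded, number of periodic orbits. Using formula~\eqref{eq:iterate}, the image $T_\lambda^n(K_\lambda)$ is a union of at most $k^n$ parallelograms whose vertices depend polynomially on $\lambda$, while $\mathcal{S}$ is fixed and linear. Consequently each $\Lambda_n:=\{\lambda : T_\lambda^n(K_\lambda)\cap\mathcal{S}\neq\emptyset\}$ is a closed semi-algebraic subset of $(0,1)$, and Corollary~\ref{fagnano_cor} even gives $\Lambda_n=\emptyset$ for all $\lambda$ in a neighborhood of $0$. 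Setting $\Lambda_{\mathrm{bad}}:=\bigcap_{n\ge 1}\Lambda_n$, the conjecture reduces to showing $\mathrm{Leb}(\Lambda_{\mathrm{bad}})=0$.

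For $\lambda\in\Lambda_{\mathrm{bad}}$, every forward iterate of $K_\lambda$ meets $\mathcal{S}$, so either some periodic orbit of $T_\lambda$ lies on $\mathcal{S}$ --- a bifurcation governed by the polynomial-in-$\lambda$ equation~\eqref{periodic_point_eq}, hence occurring only on a countable set --- or the $\omega$-limit set of some point accumulates on $\mathcal{S}$ along a Cantor-like invariant piece. The first case contributes nothing to the Lebesgue measure; the main obstacle of the whole programme is to rule out the second, genuinely chaotic scenario on a positive-measure set of $\lambda$. The pathology is real in the absence of rotational symmetry, as witnessed by Jeong's quadrilateral example cited just before Conjecture~\ref{conj:regular}, so any proof must genuinely exploit the rigidity of the regular family. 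I would seek a Diophantine-type transversality condition in the spirit of Bruin and Deane~\cite{Bruin}, adapted to the one-parameter analytic family $\lambda\mapsto T_\lambda$: because the partition of phase space itself varies analytically with $\lambda$, the main hypothesis of~\cite{Bruin} is not directly applicable and a family-specific argument, tailored to the algebraic and rotational structure of regular $k$-gons, is required. This gap is precisely the reason Conjecture~\ref{conj:regular} is stated as a conjecture rather than a theorem.
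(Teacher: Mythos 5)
This statement is Conjecture~\ref{conj:regular}: the paper offers no proof of it, only numerical evidence obtained via the stabilization criterion of Proposition~\ref{prop:stabilization} and the \textit{Mathematica} experiments described in Section~\ref{conclu}. So there is nothing in the paper to compare your argument against, and your proposal cannot be judged ``correct'' in the sense of establishing the result --- which you yourself acknowledge in your closing sentence. What you have written is a programme, and the honest identification of the obstruction (a positive-measure set of $\lambda$ for which some orbit accumulates on $\SSS$ along a Cantor-like invariant set) is exactly the right place to locate the difficulty; Jeong's quadrilateral example shows the scenario is not vacuous, and nothing in the rotational symmetry of the regular $k$-gon is currently known to exclude it.

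Two concrete problems with the parts of the proposal that you do present as established. First, your opening step asserts that the attractor of the factor map $f_\lambda$ ``consists of a finite, uniformly bounded, number of periodic orbits'' by ``standard bounds'' from~\cite{Bruin}. This is not available: the Bruin--Deane theorem is an almost-everywhere statement over a family in which the partition is fixed and only the translation parts vary, whereas here the domains $D_1,\dots,D_{[k/2]+1}$ themselves move with $\lambda$ (the paper makes precisely this point at the start of Section~\ref{conclu}). Asserting finiteness of the attractor at the outset begs the question, and indeed contradicts your own later admission that the Cantor-attractor scenario is the main obstacle. Second, the reduction to $\operatorname{Leb}(\Lambda_{\mathrm{bad}})=0$ with $\Lambda_{\mathrm{bad}}=\bigcap_n\Lambda_n$ gains nothing by itself: each $\Lambda_n$, being semi-algebraic, is a finite union of points and intervals, but a decreasing intersection of such sets can perfectly well have positive measure, so some quantitative transversality estimate (a bound on $\operatorname{Leb}(\Lambda_n)$ decaying in $n$, say) is indispensable and is precisely what is missing. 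Until that estimate is supplied, the statement remains, as the paper presents it, a conjecture.
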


Let $K_{\vla}$ be the closed ball associated to $ T_{\vla} $ defined in Subsection~\ref{su:forward}. We say that the \textit{singular set of $T_\vla$ stabilizes} if there exists an integer $n\geq1$ such that $\mathcal{S}_{n+1} \cap K_{\vla} =\mathcal{S}_n \cap K_{\vla} $. 

\begin{proposition}\label{prop:stabilization}
If the singular set of $T_\vla$ stabilizes, then $\omega(T_\vla)$ is finite.
\end{proposition}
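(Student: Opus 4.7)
The plan is to exploit stabilization of the singular set to partition $K_\vla$ into finitely many cells on which $T_\vla$ acts as an affine contraction, and then to apply a pigeonhole argument to the combinatorics of the map induced between cells.

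First, I would show that stabilization propagates: $\SSS_m \cap K_\vla = \SSS_n \cap K_\vla$ for every $m \geq n$. The key input is the forward invariance $T_\vla(K_\vla) \subset K_\vla$ from Proposition~\ref{basic_prop}: if some $z \in K_\vla \setminus \SSS_n$ had $T_\vla(z) \in \SSS_n$, then $z$ would lie in $\SSS_{n+1} \cap K_\vla = \SSS_n \cap K_\vla$, a contradiction. Hence $V := K_\vla \setminus \SSS_n$ is forward invariant under $T_\vla$; and since every forward orbit in $X$ is eventually absorbed by $K_\vla$ (by the norm estimate in Proposition~\ref{basic_prop}), the tail of every non-singular orbit lies in $V$. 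Because $\SSS_n$ is a finite union of line segments, $V$ has finitely many connected components $V_1,\ldots,V_N$; each is disjoint from $\SSS = \SSS_1 \subset \SSS_n$, so it lies in a single cone $A_{i(j)}$, and $T_\vla|_{V_j}$ is the affine contraction $z \mapsto -\la_{i(j)}z + (1+\la_{i(j)})w_{i(j)}$.

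Next, $T_\vla(V_j)$ is connected and contained in $V$, so it lies in a single component $V_{\sigma(j)}$; this defines a map $\sigma:\{1,\ldots,N\}\to\{1,\ldots,N\}$, every $\sigma$-orbit is eventually periodic, and only finitely many $\sigma$-cycles exist. Fix such a cycle $(j_0,\ldots,j_{p-1})$. The iterate $T_\vla^p$ restricted to $V_{j_0}$ is a composition of affine contractions with total factor $\prod_k \la_{i(j_k)} < 1$, and its affine extension to $\Cc$ has a unique fixed point $z^\ast \in \overline{V_{j_0}}$. Every $T_\vla$-orbit entering $V_{j_0}$ converges under $T_\vla^p$ to $z^\ast$, so its $\omega$-limit set equals the finite set $\{z^\ast, T_\vla z^\ast, \ldots, T_\vla^{p-1}z^\ast\}$, computed via the affine extensions on each cone. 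Since every non-singular orbit eventually enters a cell in some $\sigma$-cycle, aggregating over the finitely many cycles realizes $\omega(T_\vla)$ as a finite union of finite sets, hence finite.

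The delicate point is this last identification of $\omega(z)$: the fixed point $z^\ast$ may lie on $\partial V_{j_0} \subset \SSS_n$ rather than inside the open cell, so $T_\vla$ could be multivalued or undefined at $z^\ast$ itself. This is harmless because the orbit segments producing the limit remain in the open cells $V_{j_k}$, where $T_\vla$ is single-valued and affine, so the accumulation points $T_\vla^r z^\ast$ are unambiguously determined by the affine extensions.
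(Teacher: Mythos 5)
Your proof is correct and follows essentially the same route as the paper's (which is only two sentences long): decompose $K_\vla\setminus\SSS_n$ into finitely many components, observe that $T_\vla$ contracts each into another, and conclude finiteness of $\omega(T_\vla)$. You usefully supply the details the paper omits — the forward invariance of $K_\vla\setminus\SSS_n$, the eventual periodicity of the induced map on components, and the caveat that the limiting fixed points may lie on $\SSS$ (a point the paper only addresses in the remark following the proposition).
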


\begin{proof}
If the singular set of $T_\vla$ stabilizes then there exists an integer $n\geq1$ such that the map $T^k_\vla$ is continuous on each connected component of $K_\vla\setminus \mathcal{S}_n$ for every $k > 0$. Since $K_\vla\setminus \mathcal{S}_n$ has finitely many such components, say $Y_1,\ldots,Y_m$, and the map $T_\vla$ contracts uniformly every $Y_i$ into some $Y_j$, we conclude that $\omega(T_\vla)$ is finite.
\end{proof}

We cannot claim that every element of $ \om(T_{\vla}) $ in the previous proposition is a periodic orbit, because some of these elements may belong to the singular set of $ T_{\vla} $. However, such an element does generate a periodic orbit for a proper extension of $ T_{\vla} $ up to $ \SSS $ (degenerate periodic orbit). We believe that degenerate periodic orbits exist only in very specific circumstances, for example at the bifurcation of a true periodic orbit. Even though the stabilization of the singular set is not strictly speaking a sufficient condition for the asymptotic stability, it still provides a useful criterion for investigating asymptotic periodicity that can be effectively implemented numerically. 

We wrote a script in \textit{Mathematica} that generates the singular sets of any order of a polygonal outer billiard, and checks whether they stabilize. When the singular set stabilizes at some order $n$, the script attributes a color to the connected components of $K_\vla\setminus \mathcal{S}_n$. The union of all regions of the same color gives the basin of attraction in $K_\vla$ of a single attracting periodic orbit. 

To test numerically Conjecture~\ref{conj:regular}, we run our code with different regular $k$-gons with $5\leq k \leq 12$, and different contraction rates $ \lambda_{i} \in (0,1) $. As the number of sides of the polygon increases or the contraction rates get closer to $1$, the code execution time increases drastically. All experiments showed that the singular set stabilizes, in agreement with our conjecture. 

Figs~\ref{fig:pentagon}-\ref{fig:octnon} show some snapshots of our numerical experiments. In these figures, the polygon $P$ is located at the center in white color. The contraction rates $ \la_{i} $ are all equal to $ \la \in (0,1) $. The set $ K_{\vla} $ surrounds $ P $, and the lines forming the stabilized singular set $ \SSS_{n} $ are drawn in black color. The domains of continuity of $T^n_\vla$ are the polygonal regions cut by $ \SSS_{n} $. The different colors indicate the basins of attraction of different periodic orbits.

The basins of attractions of the periodic orbits in the dissipative outer billiard about the pentagon can be seen in Fig.~\ref{fig:pentagon}: the four colors correspond to the basins of attractions of four distinct attracting periodic orbits. 
\begin{figure}[t]
\subfloat[]{\includegraphics[trim=80mm 80mm 80mm 80mm, clip, width=5.5cm]{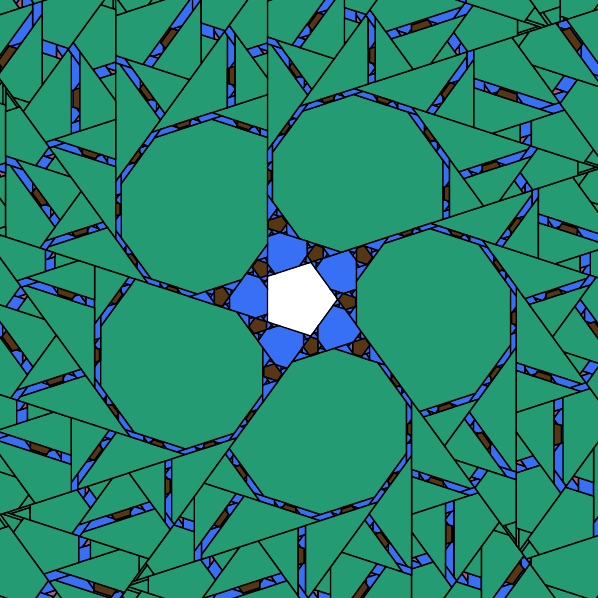}}
\hspace{1cm}
\subfloat[]{\includegraphics[trim=10mm 10mm 10mm 10mm, clip, width=5.5cm]{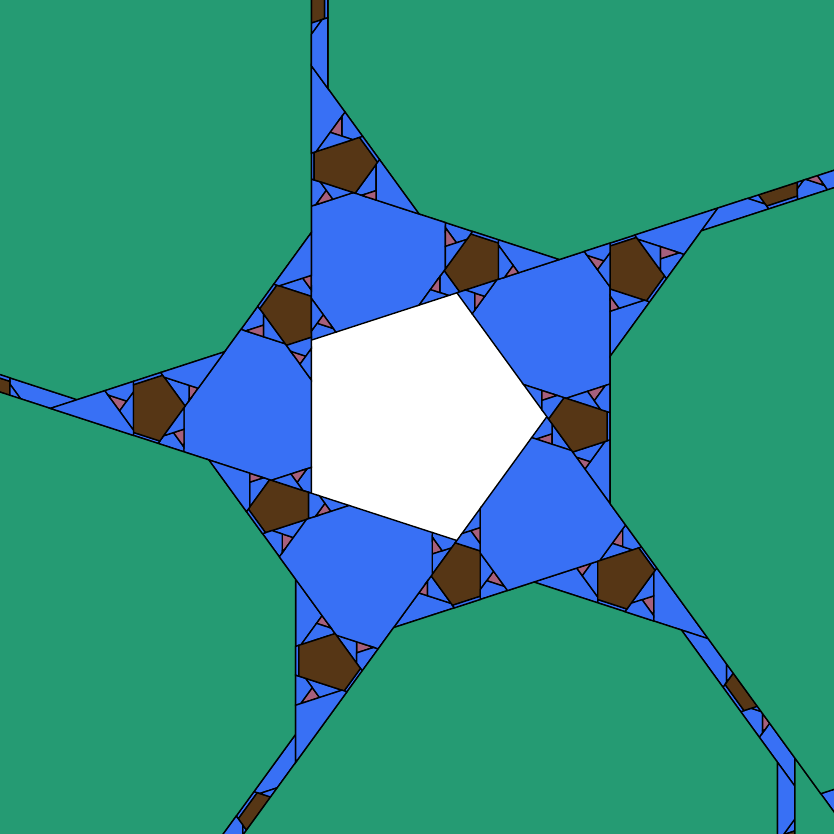}}
\caption{(A) Pentagon with $\la=0.95$ and (B) magnification of (A).}
\label{fig:pentagon}
\end{figure}
In Fig.~\ref{fig:pentagon}(B), the blue regions form the basin of attraction of the Fagnano orbit. In the figure, there is also another periodic orbit of period five, but winding around the pentagon twice. This orbit is located on the largest polygonal regions in green color. The basin of attraction of a period orbit of period $10$ is plotted in brown color. Finally, the tiny regions in pink color form the basin of attraction of a period orbit of period $35$. 

We found similar orbit structures for the dissipative outer billiard about the hexagon, heptagon, octagon and nonagon (see Figs~\ref{fig:hexhep} and \ref{fig:octnon}).

\begin{figure}[ht]
\subfloat[]{\includegraphics[trim=60mm 60mm 60mm 60mm, clip, width=5.5cm]{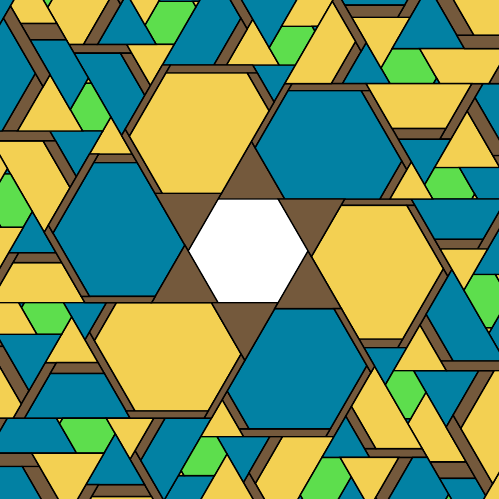}}
\hspace{1cm}
\subfloat[]{\includegraphics[trim=65mm 65mm 65mm 65mm, clip, width=5.5cm]{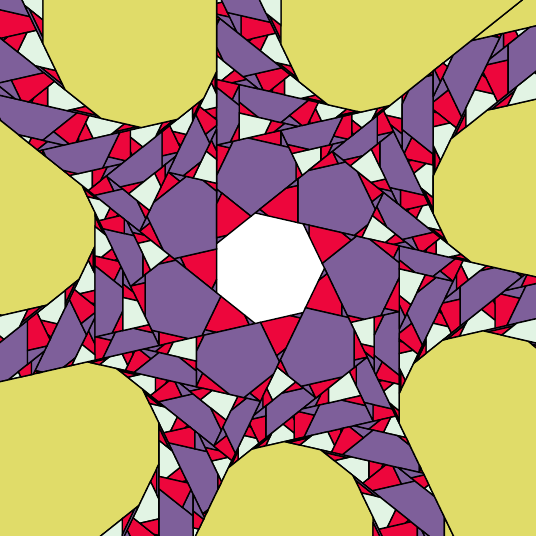}}
\caption{(A) Hexagon and (B) Heptagon with $\la=0.90$.}
\label{fig:hexhep}
\end{figure}

\begin{figure}[ht]
\subfloat[]{\includegraphics[trim=100mm 100mm 100mm 100mm, clip, width=5.5cm]{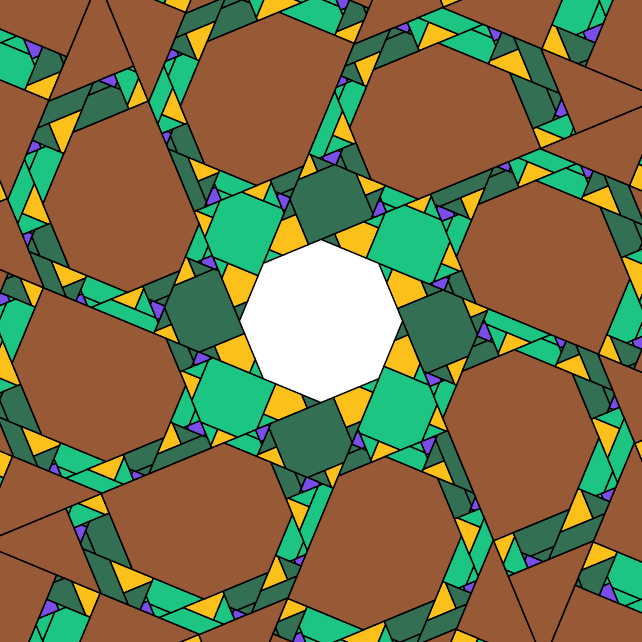}}
\hspace{1cm}
\subfloat[]{\includegraphics[trim=60mm 60mm 60mm 60mm, clip, width=5.5cm]{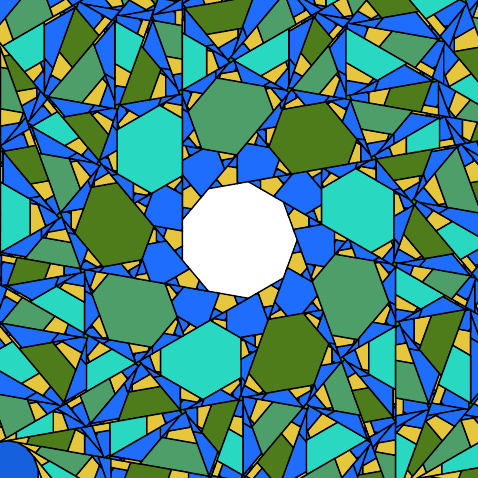}}
\caption{(A) Octagon and (B) Nonagon with $\la=0.90$.}
\label{fig:octnon}
\end{figure}

\newpage

\appendix
\section{}
\label{appendix}

\begin{lemma}
\label{le:solutionQ}
The polynomial $$ p_{n}(x) = x^{2n} - x^{n} - x^{n-1} + 1\,,\quad n\geq1 $$ has a unique root $ \la_{n} \in [0,1) $. Moreover,
\begin{enumerate}
	\item[(a)] $p_n(x)<0$ for every $x\in(\la_n,1)$
	\item[(b)] $\{\la_{n}\}$ is strictly increasing
	\item[(c)] $\lim_{n\to\infty} \la_{n}=1$
\end{enumerate}
\end{lemma}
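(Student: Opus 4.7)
The plan is to first factor out the obvious zero at $x=1$. Since $p_n(1)=0$, a short telescoping computation gives the factorization
\[
p_n(x) = (x-1)\,g_n(x), \qquad g_n(x) := \sum_{k=n}^{2n-1} x^k - \sum_{k=0}^{n-2} x^k.
\]
The case $n=1$ is trivial, since $g_1(x)=x$, so $\lambda_1=0$ works and all three claims are immediate. For $n\ge 2$ I would prove that $g_n$ has a unique zero in $[0,1)$ by dividing by $x^{n-1}$ on $(0,1)$ and rewriting
\[
\frac{g_n(x)}{x^{n-1}} = \sum_{j=1}^{n} x^j - \sum_{j=1}^{n-1} x^{-j}.
\]
On $(0,1)$ each summand $x^j$ is strictly increasing in $x$ and each $-x^{-j}$ is strictly increasing, so $g_n/x^{n-1}$ is strictly increasing on $(0,1)$. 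Combined with $g_n(0)=-1<0$ and $g_n(1)=1>0$, this yields a unique zero $\lambda_n\in(0,1)$. Assertion (a) then falls out of the factorization: on $(\lambda_n,1)$ the factor $(x-1)$ is negative while $g_n$ is positive by continuity and uniqueness, so $p_n<0$ there.

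For (b), the key is to evaluate $p_{n+1}$ at $\lambda_n$ using the relation $\lambda_n^{2n}=\lambda_n^n+\lambda_n^{n-1}-1$ coming from $p_n(\lambda_n)=0$. Substituting this into $p_{n+1}(\lambda_n)=\lambda_n^{2n+2}-\lambda_n^{n+1}-\lambda_n^n+1$ and simplifying would give the clean identity
\[
p_{n+1}(\lambda_n) = (1-\lambda_n^2)(1-\lambda_n^n),
\]
which is strictly positive for $\lambda_n\in[0,1)$. Since $p_{n+1}(0)=1>0$ and, by part~(a) applied to $p_{n+1}$, $p_{n+1}$ is positive on $[0,\lambda_{n+1})$ and negative on $(\lambda_{n+1},1)$, the positivity of $p_{n+1}(\lambda_n)$ forces $\lambda_n<\lambda_{n+1}$.

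Finally, for (c), I would rewrite $p_n(\lambda_n)=0$ as $\lambda_n^{n-1}(1+\lambda_n) = 1 + \lambda_n^{2n}$. By (b) the sequence $\{\lambda_n\}$ is monotone and bounded above by $1$, hence converges to some $L\in[0,1]$. If $L<1$, then $\lambda_n\le M<1$ for some $M$ and all large $n$, so both $\lambda_n^{n-1}$ and $\lambda_n^{2n}$ tend to $0$; the left-hand side tends to $0$ but the right-hand side tends to $1$, a contradiction. Hence $L=1$. The only steps that require a little care are the monotonicity argument for $g_n/x^{n-1}$ and the algebraic simplification yielding $p_{n+1}(\lambda_n)=(1-\lambda_n^2)(1-\lambda_n^n)$; neither is deep, but both must be carried out cleanly.
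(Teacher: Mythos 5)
Your proof is correct, and it follows the same overall skeleton as the paper's: factor $p_n(x)=(x-1)q_n(x)$ (your $g_n$ is exactly the paper's $q_n$), handle $n=1$ separately, and prove (c) by a monotone-bounded limit argument. The two places where you genuinely diverge are worth noting. For uniqueness of the root, the paper invokes Descartes' rule of signs on $q_n$, whereas you divide by $x^{n-1}$ and observe that $\sum_{j=1}^n x^j-\sum_{j=1}^{n-1}x^{-j}$ is strictly increasing on $(0,1)$ with a sign change; your argument is more elementary and self-contained, at the cost of a couple of extra lines. For monotonicity of $\{\la_n\}$, the paper evaluates $q_n$ at $\la_{n+1}$ and checks $q_n(\la_{n+1})>0$ by a telescoping comparison with $q_{n+1}$, while you evaluate $p_{n+1}$ at $\la_n$ and derive the identity $p_{n+1}(\la_n)=(1-\la_n^2)(1-\la_n^n)>0$ using $\la_n^{2n}=\la_n^n+\la_n^{n-1}-1$; I checked this identity and it is correct, and combined with the sign pattern from part (a) it does force $\la_n<\la_{n+1}$. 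Your version is arguably cleaner since the positivity is manifest from the factored form rather than from an inequality among powers of $\la_{n+1}$. The limit arguments in (c) are essentially equivalent: the paper gets a contradiction from $q_n(\la_\infty)\to-1$, you from the rearranged identity $\la_n^{n-1}(1+\la_n)=1+\la_n^{2n}$. No gaps.
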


\begin{proof}
Since $p_n(1)=0$ we can write $p_n(x)=(x-1)q_n(x)$ where
$$
q_n(x)=x^{2n-1}+\cdots+x^{n}-x^{n-2}-\cdots-1\,.
$$
When $n=1$, we have $q_1(x)=x$, and so $\la_1=0$. Now, let $ n>1 $. By Descartes' rule of signs, the polynomial $q_n$ has a unique positive root, say $\la_n> 0$. Since $q_n(0)=-1$ and $q_n(1)=1$ we conclude that $\la_n\in(0,1)$. Part (a) follows from the fact that $q_n(x)>0$ for every $x\in(\la_n,1)$. To prove Part (b), we show that $ q_{n}(\la_{n+1})>0 $. Indeed,
\begin{align*}
q_{n}(\la_{n+1})&=\la_{n+1}^{2n-1}+\cdots+\la_{n+1}^{n}-\la_{n+1}^{n-2}-\cdots-1\\
&=q_{n+1}(\la_{n+1})+\la_{n+1}^{n-1}+\la_{n+1}^{n}-\la_{n+1}^{2n}-\la_{n+1}^{2n+1}\\
&=\la_{n+1}^{n-1}(1+\la_{n+1}-\la_{n+1}^{n+1}-\la_{n+1}^{n+2})>0\,.
\end{align*}
Finally, we prove Part (C). Let $ \la_\infty $ be the limit of $ \{\la_{n}\} $. Such a limit exists, because the sequence is strictly increasing and bounded. If $ \la_\infty<1 $, then $\lim_{n\to\infty}q_n(\la_\infty)=-1$, contradicting the fact that $q_n(\la_\infty)>0$ for every $n$. This completes the proof.
%
%
\end{proof}

\begin{lemma}
\label{le:solutionT}
The polynomial $$ p_{n}(x) = x^{4n+4}-x^{3n+3}+x^{2n+2}-x^{2n+1}-x^{n+1}+1\,,\quad n\geq0 $$ has a unique root $ \gamma_{n} \in [0,1) $. Moreover, 
\begin{enumerate}
	\item[(a)] $p_n(x)<0$ for every $x\in(\gamma_n,1)$
	\item[(b)] $\{\gamma_{n}\}$ is strictly increasing
	\item[(c)] $\lim_{n\to\infty} \gamma_{n}=1$
\end{enumerate}
\end{lemma}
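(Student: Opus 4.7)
The plan is to mirror the strategy used for Lemma~\ref{le:solutionQ}, the only preliminary step being to find the right factorization of $p_n$. Since $p_n(1)=1-1+1-1-1+1=0$, we have $(x-1)\mid p_n$; grouping the monomials and telescoping gives the compact identity
\[
p_n(x)=(x^{n+1}-1)(x^{3n+3}-1)+x^{2n+1}(x-1),
\]
from which
\[
q_n(x):=\frac{p_n(x)}{x-1}=(x-1)A_n(x)B_n(x)+x^{2n+1},
\]
where $A_n(x)=1+x+\cdots+x^n$ and $B_n(x)=1+x+\cdots+x^{3n+2}$. Spotting this factorization is the main (though short) obstacle; once it is in hand, the three parts are routine.

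For existence, uniqueness, and part (a), I would rewrite $q_n(x)=x^{3n+3}A_n(x)+x^{2n+1}-A_n(x)$. In decreasing degree the nonzero coefficients are $n+1$ positives (from $x^{4n+3}$ down to $x^{3n+3}$), the positive $x^{2n+1}$, and $n+1$ negatives (from $x^n$ down to $1$): a single sign change. Descartes' rule of signs then forces a unique positive root $\gamma_n$, and $q_n(0)=-1<0<1=q_n(1)$ confines it to $(0,1)$. Consequently $q_n>0$ on $(\gamma_n,1)$, and since $x-1<0$ there, $p_n<0$ on $(\gamma_n,1)$, establishing (a).

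For monotonicity, I would show $q_n(\gamma_{n+1})>0$, which (because $q_n(0)<0$ and $\gamma_n$ is the only positive root of $q_n$) forces $\gamma_n<\gamma_{n+1}$. Writing $\gamma=\gamma_{n+1}$ and using $q_{n+1}(\gamma)=0$ to substitute $\gamma^{2n+3}=(1-\gamma)A_{n+1}(\gamma)B_{n+1}(\gamma)$ produces
\[
q_n(\gamma)=(1-\gamma)\Bigl[\gamma^{-2}A_{n+1}(\gamma)B_{n+1}(\gamma)-A_n(\gamma)B_n(\gamma)\Bigr].
\]
Since $A_{n+1}(\gamma)>A_n(\gamma)$, $B_{n+1}(\gamma)>B_n(\gamma)$, and $\gamma^{-2}>1$, the bracket is strictly positive, hence $q_n(\gamma)>0$.

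For part (c), monotonicity together with the bound $\gamma_n<1$ forces $\gamma_\infty:=\lim\gamma_n\in(0,1]$. Assume for contradiction that $\gamma_\infty<1$. Then $\gamma_n<\gamma_\infty$ for all $n$, so $q_n(\gamma_\infty)>0$ for every $n$. Evaluating $q_n(x)=(x-1)A_n(x)B_n(x)+x^{2n+1}$ at the fixed value $x=\gamma_\infty$ and sending $n\to\infty$, the two geometric sums converge to $1/(1-\gamma_\infty)$ while $\gamma_\infty^{2n+1}\to 0$, yielding
\[
\lim_{n\to\infty}q_n(\gamma_\infty)=\frac{\gamma_\infty-1}{(1-\gamma_\infty)^2}=-\frac{1}{1-\gamma_\infty}<0,
\]
which contradicts $q_n(\gamma_\infty)>0$. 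Therefore $\gamma_\infty=1$.
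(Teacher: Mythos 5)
Your proof is correct and follows the same overall strategy as the paper's: factor $p_n(x)=(x-1)q_n(x)$ with the same quotient $q_n$, use Descartes' rule of signs together with $q_n(0)=-1<0<1=q_n(1)$ for existence, uniqueness and part (a), prove (b) by showing $q_n(\gamma_{n+1})>0$, and obtain (c) from a sign contradiction in the limit. The one genuine difference is your closed form $q_n(x)=(x-1)A_n(x)B_n(x)+x^{2n+1}$: the paper instead proves $q_n(\gamma_{n+1})>0$ by writing $q_n=q_{n+1}+r_n$ for an explicit polynomial $r_n$ and checking term by term that $r_n>0$ on $(0,1)$, whereas your substitution $\gamma^{2n+3}=(1-\gamma)A_{n+1}(\gamma)B_{n+1}(\gamma)$ makes the positivity immediate and is less error-prone; it also produces the correct limit $-1/(1-\gamma_\infty)$ in part (c), where the paper writes $-1$ (a harmless slip, since only the sign is used).
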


\begin{proof}
Since $p_n(1)=0$ we can write $p_n(x)=(x-1)q_n(x)$ where
$$
q_n(x)=x^{4n+3}+\cdots+x^{3n+3}+x^{2n+1}-x^{n}-\cdots-1\,.
$$
By Descartes' rule of signs, the polynomial $q_n$ has a unique positive root, say $\gamma_n> 0$. Since $q_n(0)=-1$ and $q_n(1)=1$ we conclude that $\gamma_n\in(0,1)$. Part (a) follows from the fact that $q_n(x)>0$ for every $x\in(\gamma_n,1)$. To prove Part (b), we show that $ q_{n}(\gamma_{n+1})>0 $. Indeed,
\begin{align*}
q_{n}(\gamma_{n+1})&=\gamma_{n+1}^{4n+3}+\cdots+\gamma_{n+1}^{3n+3}+\gamma_{n+1}^{2n+1}-\gamma_{n+1}^{n}-\cdots-1\\
&=q_{n+1}(\gamma_{n+1})+r_n(\gamma_{n+1})
\end{align*}
where 
\begin{align*}
r_n(x)&=x^{n+1}+x^{2n+1}+x^{3n+3}+\cdots+x^{3n+5}\\
&-x^{2n+3}-x^{4n+4}-x^{4n+5}-\cdots-x^{4n+7}\,.
\end{align*}
Since $r_n(\gamma_{n+1})>0$ and $q_{n+1}(\gamma_{n+1})=0$ we get $q_{n}(\gamma_{n+1})>0$. To complete the proof, we prove Part (c). Let $ \gamma_\infty $ be the limit of $ \{\gamma_{n}\} $. Such a limit exists, because the sequence is strictly increasing and bounded. If $ \gamma_\infty<1 $, then $\lim_{n\to\infty}q_n(\gamma_\infty)=-1$, contradicting $q_n(\gamma_\infty)>0$ for every $n$.
\end{proof}


\bibliographystyle{plain}

\end{document}